\documentclass[a4paper,10pt]{amsart}
\usepackage[matrix,arrow,tips,curve]{xy}
\usepackage[english]{babel}
\usepackage[leqno]{amsmath}
\usepackage{amssymb,amsthm}
\usepackage{amscd}
\usepackage{enumerate}

\newtheorem{thm}{Theorem}[subsection]
\newtheorem{lemma}[thm]{Lemma}
\newtheorem{lemmadefi}[thm]{Lemma - Definition}

\newtheorem{prop}[thm]{Proposition}

\newtheorem{cor}[thm]{Corollary}

\newtheorem{fact}[thm]{Fact}

\theoremstyle{remark}
\newtheorem{remark}[thm]{Remark}
\theoremstyle{definition}
\newtheorem{defi}[thm]{Definition}
\newtheorem{nota}[thm]{}

\numberwithin{equation}{section}

\newenvironment{sis}{\left\{\begin{aligned}}{\end{aligned}\right.}

\newtheorem{example}[thm]{Example}

\newcommand{\la}{\longrightarrow}
\newcommand{\ha}{\hookrightarrow}

\newcommand{\w}{\widetilde}
\newcommand{\ov}{\overline}
\newcommand{\un}{\underline}

\newcommand{\Div}{\operatorname{Div}}
\newcommand{\Aut}{\operatorname{Aut}}
\newcommand{\Supp}{\operatorname{Supp}}

\newcommand{\codim}{\operatorname{codim}}

\newcommand{\Pic}{\operatorname{Pic}}
\newcommand{\supp}{\operatorname{Supp}}
\newcommand{\cosupp}{\operatorname{Supp}}
\newcommand{\mdeg}{\operatorname{{\underline{de}g}}}
\newcommand{\Set}{{\operatorname{Set}}^1}

\def\L{\mathcal L}
\def\O{\mathcal O}

\def\Y{\mathcal Y}

\newcommand{\G}{\mathbb{G}}

\newcommand{\Z}{\mathbb{Z}}
\newcommand{\R}{\mathbb{R}}
\def\ZZ{\mathcal Z}

\def\me{\underline{e}}
\def\md{\underline{d}}
\def\mo{\underline{0}}

\newcommand{\gS}{\gamma_S}

\newcommand{\nS}{\nu_S}


\newcommand{\pr}[1]{\mathbb{P}^{#1}}

\newcommand{\Xn}{X^{\nu}}
\newcommand{\Xnn}{X'^{\nu}}


 \def\mgb{\overline{M}_g}
\def\Mgb{\overline{M}_g}

\def\agvor{\overline{A}^{Vor}_g}
\def\agale{\overline{A}^{{\text{mod}}}_g}
\def\Agale{\overline{A}^{{\text{mod}}}_g}


\def\tg{{\rm t}_g}
\def\tgvor{\overline{{\rm t}}_g^{Vor}}
\def\tgb{\overline{{\rm t}}_g}


\def\PXg{P^{g-1}_X}
\def\PXgb{\overline{P^{g-1}_X}}
\def\PSd{P_S^{\md}}
\def\PSb{\ov{P_S}}
\def\TSb{\ov{\Theta_S}}
\def\TXg{\Theta(X)}
\def\TSd{\Theta^{\md}_S}

\def\PX'g{P^{g-1}_{X'}}
\def\PX'gb{\overline{P^{g-1}_{X'}}}
\def\TX'g{\Theta(X')}


\newcommand{\tX}{\widetilde{X}}
\newcommand{\YS}{Y_S}

\def\BYS{\Sigma (Y_S)}

\newcommand{\sing}{X_{\text{sing}}}
\newcommand{\sep}{X_{\text{sep}}}


\newcommand{\ST}{\mathcal{ST}}
\newcommand{\SP}{\mathcal{SP}}
\newcommand{\OP}{\mathcal{OP}}
\newcommand{\Del}{{\rm Del}}


\newcommand{\etab}{{\eta_X}}
\newcommand{\etabb}{{\eta_{X'}}}

\newcommand{\Divb}{{\Div^{\mo}}}

\newcommand{\Gg}{{\rm K} _{\gamma}}

\begin{document}

\title{Torelli theorem for stable curves}

\author{Lucia Caporaso}
\address{Dipartimento di Matematica,
Universit\`a Roma Tre,
Largo S. Leonardo Murialdo 1,
00146 Roma (Italy)}
\email{caporaso@mat.uniroma3.it}

\author{Filippo Viviani}
\address{ Dipartimento di Matematica,
Universit\`a Roma Tre,
Largo S. Leonardo Murialdo 1,
00146 Roma (Italy)}
\email{viviani@mat.uniroma3.it}
\keywords{Torelli map, Jacobian variety, Theta divisor,
stable curve, stable semi-abelic pair,
compactified Picard scheme, semiabelian variety, moduli space, dual graph.}
\subjclass[2000]{14H40, 14H51, 14K30, 14D20}
\maketitle

\begin{abstract}
We study  the Torelli morphism from the moduli space of stable curves to the moduli space of 
principally polarized stable semi-abelic pairs.
We give two characterizations of its fibers, describe its injectivity locus, and 
give a
 sharp upper bound on the cardinality of the finite fibers. We also bound the dimension of the infinite fibers.
\end{abstract}
\tableofcontents

\section{Introduction}

\subsection{Problems and results}
In modern terms, the classical Torelli theorem (\cite{torelli}, \cite{ACGH}) asserts the injectivity of the Torelli map
$ \tg: M_g \to A_g $
from the  moduli scheme  $M_g$, of smooth projective curves of genus $g$, to
the
 moduli scheme $A_g$, of principally polarized abelian varieties of
dimension $g$.

\

\noindent{\it Context.}
It is well known that,  if $g\geq 1$, the schemes     $M_g$ and $A_g$ are not complete;  the problem of
finding good compactifications for them has been thorougly investigated and solved in various ways.
For $M_g$,
the  most  widely studied  compactification   is the moduli space of 
Deligne-Mumford stable curves, $\mgb$.

Now, the Torelli map $\tg$ does not extend to a regular map from $\mgb$ to $A_g$. More precisely,
the largest subset of $\Mgb$ admitting a regular map to $A_g$ extending $\tg$ is the locus of curves of compact type (i.e. every node is a separating node).
Therefore the following  question naturally arises: does there exist a good compactification of $A_g$
which contains the image of an extended Torelli morphism from the whole of $\mgb$?
If so, what are the properties of such an extended map?

It was  known to D. Mumford  
that $\tg$ extends to a morphism
$$\tgvor:\mgb\to \agvor, $$
where $\agvor$ is  the   second Voronoi
toroidal
compactification  of $A_g$; see \cite{AMRT}, \cite{Nam2}, \cite{NamT}, \cite{FC}.
On the other hand,  
 the map $\tgvor$ fails to be injective:
  if $g\geq 3$ it
has positive dimensional fibers over
 the locus
of curves having a separating node (see \cite[Thm 9.30(vi)]{NamT}). Furthermore,
 although $\tgvor$ has finite fibers  away from this  locus, it still fails to be injective
(see \cite{vv}).  The precise  generalization of the Torelli theorem
with respect to the above map $\tgvor$ remains an open problem,
since the pioneering work of   Y. Namikawa.

In recent years,    the space $\agvor$  and   the map $\tgvor$ 
have been placed in a new modular framework by V. Alexeev
(\cite{alex1}, \cite{alex}). As a consequence,
there exists a different   compactification of the Torelli morphism, 
whose geometric interpretation ties in well with the modular descriptions of $\Mgb$ and of the compactified Jacobian.
More precisely,
in \cite{alex1} a new moduli space is constructed,
the   coarse moduli space $\agale$, parametrizing principally polarized
``semi-abelic stable pairs".
The  Voronoi
 compactification $\agvor$   is shown to be  the
normalization of
the   irreducible component of
 $\agale$ containing $A_g$; see Theorem~\ref{alexthm} below.
Next, in \cite{alex}, a new
compactified Torelli morphism, $\tgb$, factoring through $\tgvor$, is defined:
$$\tgb: \mgb\stackrel{\tgvor}{\longrightarrow}\agvor\to \agale.$$
$\tgb$ is   the   map sending a stable curve $X$ to the
principally polarized semi-abelic
stable pair $(J(X)\curvearrowright
\PXgb, \TXg)$. Here
$J(X)$ is the generalized Jacobian of $X$, $\PXgb$ is
a stable semi-abelic variety,  called the compactified Picard scheme
(in degree $g-1$), acted upon by $J(X)$; finally
 $\TXg\subset \PXgb$ is  a Cartier, ample divisor, called the   Theta divisor.
As proved in \cite{alex}, $\PXgb$
  coincides with the previously constructed   compactified
Picard schemes of
\cite{OS}, \cite{simpson},  and \cite{caporaso}; moreover   the definition
of the theta divisor
  extends the classical one very closely.

\

\noindent{\it The main result.}
The goal of the present paper is to establish the precise analogue of the Torelli theorem
for stable curves, using the   compactified Torelli morphism $\tgb$.
This is done in Theorem~\ref{main}, our main result,
which characterizes curves having the same image via $\tgb$.
In particular we obtain that $\tgb$ is injective at curves having 3-edge-connected dual graph
(for example irreducible   curves, or curves with two components meeting in at least three points).
On the other hand $\tgb$ fails to be injective at curves with two components meeting at two points, as soon as $g\geq 5$; see Theorem~\ref{Tinj}.

We actually obtain two different characterizations of curves having the same Torelli image,
one  is based on the classifying morphism
of the generalized Jacobian  (see Section~\ref{Tsec}), the other one, less sophisticated and more explicit,
is of combinatorial type and we shall now illustrate it. 

Let $X$ and $X'$ be two stable curves free from separating
nodes (this is the key case); our main theorem states that  $\tgb(X)=\tgb (X')$ if and only if 
$X$ and $X'$ are ``C1-equivalent", i.e.  if the following holds.
First,  $X$ and $X'$  have the same normalization,
$Y$; let $\nu:Y\to X$ and $\nu':Y\to X'$ be the normalization maps.
Second, $\nu$ and $\nu'$ have the same ``gluing set" $G\subset Y$,
i.e. $\nu^{-1}(\sing)=\nu'^{-1}(X'_{\text{sing}})=G$.
 The third and last requirement is the interesting one, and can only be described after
  a preliminary step: we prove that the set $\sing$ of nodes of $X$
  has a  remarkable partition 
 into disjoint subsets, called ``C1-sets", defined as follows. Two nodes of $X$ belong 
  to the same C1-set if the partial normalization of $X$ at both of them is disconnected. 
  Now, the gluing set $G$ maps two-to-one onto $\sing$ and onto $X'_{\text{sing}}$, so the  partitions of $\sing$ and  of $X'_{\text{sing}}$ in C1-sets, 
   induce each a partition on $G$, which we call the ``C1-partition".
We are ready to complete our main definition:
   two curves are C1-equivalent if their    C1-partitions on $G$ coincide;
   see Definition~\ref{C1set} and Section~\ref{C1part} for details.

Let us explain the close, yet not evident, connection between the C1-sets of $X$ and the compactified Picard scheme $\PXgb$. 
The scheme $\PXgb$ is endowed with a canonical stratification with respect to the action of the Jacobian of $X$. Now, every codimension-one stratum (``C1" stands for ``codimension one")
is isomorphic to the Jacobian of the normalization of $X$ at a uniquely determined C1-set;
moreover, every C1-set can be recovered in this way (although different codimension-one strata may give the same C1-set).

Let us consider  two simple cases.
Let $X$ be irreducible; then no partial normalization of $X$ is disconnected, hence every C1-set has cardinality one.
On the other hand
$\PXgb$ has a codimension-one stratum for every node of $X$. In this case
 the C1-partition   completely determines $X$, as it identifies
all pairs of branches over the nodes; we conclude that   the Torelli map is injective
on the locus of irreducible curves, a fact that, 
for $\tgvor$, 
was well known to   Namikawa.

The next case is more interesting; let $X$ be
a cycle of $h\geq 2$ smooth components, $C_1,\ldots, C_h$,
with $h$ nodes;
then $G=\{p_1,q_1,\ldots, p_h,q_h\}$ with $p_i,q_i\in C_i$. Now every pair of nodes disconnects $X$,
therefore there is only one C1-set, namely $\sing$. On the other hand
the scheme $\PXgb$ is irreducible, and has  a unique codimension-one stratum. 
We obtain that 
all the curves of genus $g$ whose normalization is $\sqcup_1^h C_i$ and whose gluing points
are $\{p_1,q_1,\ldots, p_h,q_h\}$ are C1-equivalent, and hence they  all
have the same image via the Torelli map $\tgb$. This case yields the simplest examples of non-isomorphic curves whose
polarized compactified Jacobians are isomorphic.

\

\noindent{\it Overview of the paper.}
In Section~\ref{C1sec} we state our first version of the Torelli theorem,
and prove a series of useful results of combinatorial type.

The proof of the main theorem,   which occupies Section~\ref{proofsec}, is shaped as follows.
The difficult part is the necessary condition: assume that two curves, stable and free from separating nodes, have the same image, denoted 
$(J\curvearrowright
\ov{P}, \Theta)$,
 under the Torelli map;   we must prove that they are C1-equivalent.
First,
 the structure of $J$-scheme of $\ov{P}$ yields a stratification whose (unique) smallest stratum 
determines the normalization of the  curves, apart from  rational components.
Second, 
the combinatorics of this stratification  
(the $J$-strata form a  partially ordered set, by inclusion of closures)
carries enough information about the combinatorics of the curves, to
determine  the ``cyclic equivalence class" (see \ref{cyceq}) of their dual graphs.
This second part requires a combinatorial analysis, carried out in Section~\ref{C1sec}.
From these two steps one easily obtains that the two curves have the same normalization.
It remains to prove that the gluing sets of the normalization maps are the same, together with their C1-partition.
Here is where we use the   Theta divisor, $\Theta$, its geometry and
its connection with the Abel maps of the curves. See Subsection \ref{rectheta}
for details on this part.

The proof of the converse (i.e. the fact that C1-equivalent curves have the same Torelli image) is based on   the other, above mentioned, characterization of C1-equivalence,
which we temporarily call ``T-equivalence" (the ``T" stands for Torelli).
The crux of the matter is to prove that C1-equivalence and T-equivalence coincide;
we do that in Section~\ref{Tsec}. Having done that,   the proof of the sufficiency  follows directly from the general theory
of compactifications of principally polarized semiabelian varieties,
on which our definition of T-equivalence is based.

The paper ends with a fifth section
where we compute the upper bounds on the cardinality   (Theorem~\ref{Tinj}),
and on the dimension 
(Proposition~\ref{dim-fibers}), of the fibers of $\tgb$.
We prove that  the finite fibers
 have cardinality at most $\left\lceil \frac{(g-2)!}{2} \right\rceil$;
 in particular, since our bound is sharp, we have that, away from curves with a separating node,
    $\tgb$ is injective if and only if $g\leq 4$.
In Theorem~\ref{Tinj}  we give a geometric description of the injectivity locus of $\tgb$.

\

\noindent{\it Acknowledgements.}
We are thankful to Valery  Alexeev for telling us about  Vologodsky's thesis \cite{vv}.
Part of this paper was written while the first author was visiting MSRI, in Berkeley,
for the special semester in Algebraic Geometry;
she wishes to thank the organizers of the program as well as the Institution
for the excellent working conditions and the stimulating atmosphere. We also wish to thank the referee for some useful remarks.

\subsection{Preliminaries}
\label{not}
We work over   an algebraically closed field $k$. A variety over $k$ is
a reduced scheme of finite type over $k$.
A curve is a projective variety of pure dimension $1$.

Throughout the paper $X$ is a connected nodal curve of arithmetic genus
$g$, and  $Y$ is a nodal curve, non necessarily connected.
We denote by $g_Y$ the arithmetic genus of $Y$.

A node $n$ of $Y$ is called a {\it separating
node}
if the number of connected components of $Y\smallsetminus n$ is greater
than the number of connected components of $Y$. We denote by $Y_{\text
{sep}}$
the set of separating nodes of $Y$.

For any subset $S\subset \sing:=\{\text{nodes of }
X\}$, we denote by $\nS:\YS\to X$  the partial normalization of $X$ at
$S$. We denote by  $\gamma_S$ the number of connected components of $\YS$.
The (total) normalization of $X$ will be denoted by
$$
\nu :\Xn\la X=\coprod _{i=1}^{\gamma}C_i
$$
where the $C_i$ are the connected components of $\Xn$.
 The points $\nu^{-1}(\sing)\subset \Xn$ will be often called ``gluing points" of $\nu$.

The dual graph of
  $Y$ will be denoted by $\Gamma_Y$.
The  irreducible components of $Y$ correspond to the vertices of $\Gamma_Y$,
and we shall  systematically identify these two sets.
Likewise we shall identify
the set of  nodes of $Y$  with the set, $E(\Gamma_Y)$, of edges of $\Gamma_Y$.

A graph $\Gamma$   is a {\it cycle}    if
it is connected and has
$h$ edges and $h$ vertices (each of valency $2$) for some $h\geq 1$.
A curve whose dual graph is a cycle will be called a cycle curve.

\begin{nota}{\it The graph $\Gamma_X(S)$ and the graph $\Gamma_X\smallsetminus S$.}
\label{GammaXS} 
Let $S\subset \sing $ be a set of nodes of   $X$;
 we associate to $S$ a graph,
 $
 \Gamma_X(S),
 $   defined as follows. $\Gamma_X(S)$ is obtained from $\Gamma_X$ by contracting to a point every
edge not in $S$. In particular, the set of edges of $\Gamma_X(S)$ is naturally identified with $S$.
Consider $\nu_S:\YS \to X$ (the normalization of $X$ at $S$). Then
  the vertices of $\Gamma_X(S)$ correspond to the
connected components of $\YS$.
For example,   $\Gamma_X(\sing)=\Gamma_X$, and $\Gamma_X(\emptyset)$ is a point.

The graph $\Gamma_X\smallsetminus S$ is defined as the graph obtained from $\Gamma_X$ by removing the edges in $S$ and
leaving everything else unchanged. Of course $\Gamma_X\smallsetminus S$  is equal to the dual graph of $Y_S$.

The above  notation was also used in \cite{CV}.
\end{nota}

\begin{nota}
\label{cyceq}
In graph theory two graphs
   $\Gamma$ and $\Gamma'$ are called {\it cyclically equivalent}
(or ``two-isomorphic"),
 in symbols $\Gamma\equiv_{\rm cyc} \Gamma'$, if there exists a   bijection
  $\epsilon:E(\Gamma)\to E(\Gamma')$ inducing a bijection between the cycles of $\Gamma$ and the cycles of
  $\Gamma'$; such an $\epsilon$ will be called a  {\it cyclic bijection}.
 In other words, if for any orientation on $\Gamma$ there exists an orientation on $\Gamma'$ such that
 the following diagram is commutative:
  $$\xymatrix{
C_1(\Gamma,\Z) \ar[r]^{\epsilon_C}_{\cong} & C_1(\Gamma',\Z)\\
 H_1(\Gamma,\Z)\ar[r]^{\epsilon_H}_{\cong}\ar@{^{(}->}[u] & H_1(\Gamma',\Z) \ar@{^{(}->}[u]
}$$
where the vertical arrows are the inclusions,  $\epsilon_C$ is the (linear) isomorphism induced by $\epsilon$ and $\epsilon_H$
the restriction of $\epsilon_C$ to $H_1(\Gamma,\Z)$.
\end{nota}

\begin{nota}{\emph{The moduli space $\agale$}.}
\label{pairs}

\begin{defi}\cite{alex1}
\label{SSAPP}
A {\it principally polarized stable semi-abelic}
pair (ppSSAP for short) over $k$ is a pair
$( G \curvearrowright P, \Theta)$ where
\begin{itemize}
 \item[(i)] $G$ is a semiabelian variety over $k$, that is an algebraic
group which is an
extension of an abelian variety $A$ by a torus  $T$:
$$1\to T\to G \to A \to 0.$$
 \item[(ii)] $P$ is a seminormal, connected, projective variety of pure
dimension equal to
$\dim G$.
\item[(iii)] $G$ acts on $P$ with finitely many orbits, and with connected
and reduced stabilizers contained in the toric part $T$ of $G$.
\item[(iv)] $\Theta$ is an effective ample Cartier divisor on $P$ which
does not contain
any $G$-orbit, and such that $h^0(P, \O_P(\Theta))=1$.
\end{itemize}
A $G$-variety $(G \curvearrowright P)$ satisfying the first three
properties  above
is called a {\it stable semi-abelic variety}. A
pair $( G \curvearrowright P, \Theta)$
satisfying all the above properties,    except   $h^0(P, \O_P(\Theta))=1$,
is called
a {\it principally polarized stable semi-abelic pair}.

When $G$ is an abelian variety, the word ``semi-abelic" is replaced by
``abelic".
\end{defi}

A  homomorphism
$\Phi=(\phi_0,\phi_1):(G \curvearrowright P, \Theta)\to (G'
\curvearrowright P', \Theta')$
between two ppSSAP is given by   a  homomorphism of algebraic groups
$\phi_0:G\to G'$,
and a morphism $\phi_1:P\to P'$, satisfying the following two conditions:

\item
(1) $\phi_0$ and $\phi_1$ are compatible with the
actions
of $G$ on $P$ and of $G'$ on $P'$.
\item
(2) $\phi_1^{-1}(\Theta')=\Theta$

$\Phi=(\phi_0, \phi_1)$ is an isomorphism if $\phi_0$
and
$\phi_1$ are isomorphisms.

One of the main results of \cite{alex1} is the following
\begin{thm}
\label{alexthm}
There exists a projective scheme $\agale$ which is a coarse moduli space
for
 principally polarized stable semi-abelic pairs.
Moreover the open subset parametrizing   principally polarized stable
abelic pairs
is naturally isomorphic to $A_g$. The normalization of the
irreducible component  of $\agale$ containing $A_g$  (the ``main
component")
is isomorphic to the second toroidal  Voronoi compactification $\agvor$.\end{thm}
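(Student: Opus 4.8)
The plan is to follow Alexeev's strategy, constructing $\agale$ as a moduli space of \emph{stable pairs} and then matching the combinatorics of its boundary against the second Voronoi fan. I would split the argument into the three assertions: existence of the coarse space $\agale$, identification of the abelic locus with $A_g$, and identification of the normalized main component with $\agvor$.

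First, for the existence of $\agale$ as a projective coarse moduli space, I would set up the moduli functor of ppSSAP and verify the standard hypotheses, the crucial input being boundedness. Since $\Theta$ is an ample Cartier divisor with $h^0(P,\O_P(\Theta))=1$ on a seminormal variety $P$ of dimension $g$, the Hilbert polynomial of $(P,\O_P(\Theta))$ is pinned down, and the class of pairs $(G\curvearrowright P,\Theta)$ satisfying (i)--(iv) of Definition~\ref{SSAPP} forms a bounded family. One then embeds all such pairs into a fixed projective space via a suitable multiple of $\Theta$, isolates the locally closed locus of the Hilbert scheme parametrizing pairs with the required properties, and passes to a GIT quotient by the natural reductive group. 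Separatedness and properness follow from the valuative criteria; properness is where one invokes a stable-reduction statement for one-parameter degenerations of ppSSAP, produced via Mumford's relatively complete model applied to the degeneration datum of the semiabelian part.

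Second, to identify the open abelic locus with $A_g$, observe that when $G=A$ is abelian the torus $T$ is trivial, so by (iii) the stabilizers are trivial and $G$ acts with finitely many \emph{free} orbits; connectedness and the dimension count in (ii) then force a single orbit, so $P$ is an $A$-torsor and $\Theta$ restricts to a principal theta divisor. This gives a canonical bijection between isomorphism classes of principally polarized stable \emph{abelic} pairs and principally polarized abelian varieties, which upgrades to an isomorphism of the corresponding open subscheme of $\agale$ with $A_g$.

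Third, and this is the heart of the matter, I would identify the normalization of the main component with $\agvor$. A one-parameter degeneration of principally polarized abelian varieties is encoded by Mumford's degeneration datum, whose combinatorial shadow is an integral positive-semidefinite quadratic form, i.e.\ a point in the rational closure of the cone of positive-definite forms; the compactification $\agvor$ is by definition assembled from the second Voronoi admissible decomposition of this cone modulo $\mathrm{GL}_g(\Z)$. The content of this step is to show that the stable semi-abelic limit attached to such a degeneration depends precisely on the Voronoi cone containing the form: the paving of the torus part and the gluing of the orbit closures inside $P$ are read off from the associated Delaunay/Voronoi cell decomposition, so two degenerations yield the same ppSSAP limit exactly when they lie in the same Voronoi cone. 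This produces a morphism $\agvor\to\agale$ with image the main component, and the main obstacle is to prove it is the \emph{normalization}, i.e.\ finite and birational onto its image with normal source. Normality of the source is automatic for a toroidal compactification, birationality follows from the second step over the open $A_g$, and finiteness requires controlling the boundary fibers, i.e.\ showing injectivity on strata up to Voronoi combinatorics. Establishing this last point, namely that $\agale$ genuinely fails to be normal along the boundary while $\agvor$ resolves exactly this failure, is the delicate step, and is where the full force of the match between Voronoi geometry and the degeneration theory of semi-abelic pairs is needed.
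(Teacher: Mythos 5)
You cannot really be checked against the paper here, because the paper contains no proof of Theorem~\ref{alexthm}: it is quoted, explicitly, as one of the main results of Alexeev's paper \cite{alex1}, and is used in the rest of the article as a black box. Your proposal is therefore an attempt to outline Alexeev's Annals-length argument from scratch. As an outline it gets part of the architecture right: the identification of the abelic locus with $A_g$ (your second step: trivial stabilizers, hence a single orbit, hence $P$ is an $A$-torsor) is essentially Alexeev's argument and is correct, and you correctly locate the heart of the Voronoi statement in the matching between degeneration data and Delaunay--Voronoi combinatorics. But boundedness, stable reduction for one-parameter degenerations, and the finiteness and birationality of the map from $\agvor$ onto the main component of $\agale$ are each major theorems of \cite{alex1}, and your text invokes them in a sentence apiece rather than proving them --- for the last one you say yourself that it is ``the delicate step''. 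An outline that defers exactly the hard steps is not a proof.

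Moreover, the one step where you commit to a concrete mechanism --- constructing $\agale$ as a GIT quotient of a locus in a Hilbert scheme --- is precisely where the proposal deviates from anything known to work. Alexeev's construction deliberately avoids GIT: he shows that stable semi-abelic pairs form a bounded, separated, proper algebraic stack with finite automorphism groups, obtains the coarse moduli space from the Keel--Mori theorem, and establishes projectivity afterwards by a Koll\'ar-type semipositivity and ampleness argument. To run GIT instead you would have to prove that the Hilbert points corresponding to stable semi-abelic pairs are exactly the GIT-(poly)stable ones with closed orbits, i.e.\ carry out a stability analysis for highly degenerate, non-normal, reducible pairs; this has never been done, is not known to be true, and is exactly the difficulty that the stack-theoretic approach was designed to bypass (GIT compactifications in this area, such as Nakamura's via projectively stable quasi-abelian schemes, require level structures and produce a priori different spaces). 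So, read as a proof, the existence part of your argument has a genuine gap at the quotient step, independently of the deferred boundedness, properness, and Voronoi-matching steps.
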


To the best of our knowledge, it is not known whether the  main
component of $\agale$ is
normal; see \cite{brion} for an expository account.

\end{nota}

\begin{nota}{\emph{The compactified Torelli map $\tgb:\mgb\to \agale$}.}
\label{torelli-map}

We shall now recall the modular description of
the compactified Torelli map
$\tgb:\mgb\to \agale$.

\begin{defi} \label{balanced}
Let $Y$ be a  nodal curve of arithmetic  genus
$g_Y$.
Let $M$ be a line bundle on $Y$ of multidegree $\md$ and degree $g_{Y}-1$.
We say that $M$, or its multidegree $\md$, is \emph{semistable} if
for every  subcurve
$Z\subset Y$ of arithmetic genus $g_Z$, we have
\begin{equation}\label{BIm}
g_Z -1\leq d_Z,
\end{equation}
where $d_Z:=\deg_ZM$.
We say that $M$, or its multidegree $\md$, is \emph{stable} if
the equality holds in (\ref{BIm}) exactly  for every subcurve  $Z$ which is a
union of connected components
of $Y$. We denote by
$\Sigma(Y)$ the set of stable multidegrees on $Y$.
\end{defi}
We denote by $\Pic^{\md}Y$ the variety of line bundles of multidegree $\md$ on $Y$.
The variety of line bundles having degree $0$ on every component of $Y$,
$\Pic^{\mo}Y=J(Y)$, is identified with the generalized Jacobian.
Using the notation of \ref{not} and \ref{GammaXS}, we now recall   some properties of the compactified Jacobian
$\PXgb$   (see \cite{alex}, \cite{ctheta}).

\begin{fact}
\label{Comp-Pic}
Let $X$ be a connected nodal curve of genus $g$, and $J(X)$ its generalized Jacobian.
\begin{enumerate}[(i)]

\item $\PXgb$ is a SSAV with respect to the natural action $J(X)$.

\item
The orbits of the action of $J(X)$ give a stratification of $\PXgb$:
\begin{equation}\label{strata-comp-pic}
\PXgb =\coprod_{\stackrel{\emptyset\subseteq S \subseteq\sing}{\md \in
\BYS}}\PSd,
\end{equation}
where each stratum $\PSd$ is canonically isomorphic to $\Pic^{\md}\YS$.

\item
\label{sep}
$\BYS$ is not empty if and only if   $\YS  $ has no separating node.
In particular, if $\BYS$ is not empty then
$\sep \subseteq S$.

\item
Each stratum $\PSd$ is a torsor under the generalized Jacobian
$J(\YS)$ of $\YS$,
and the action of $J(X)$ on $\PSd$ factorizes
through the pull-back map
$J(X)\twoheadrightarrow J(\YS)$. Hence every nonempty stratum
$\PSd$   has
dimension
\begin{equation}\label{dimpic}
\dim \PSd=\dim J(\YS)=g-\# S+\gS-1=g-b_1(\Gamma_X(S)).
\end{equation}

\item
\label{cpstr}
If $P^{\md '}_{S'}\subset \overline{\PSd}$ then  $S\subset S'$ and
$\md \geq \md'$
(i.e. $d_i\geq d_i', \  \forall i=1,\ldots, \gamma$).


\item
The smooth locus $\PXg$ of $\PXgb$ consists of the strata of maximal
dimension:
$$
\PXg=\coprod_{\md \in \Sigma(Y_{\sep})}\PSd.
$$
The irreducible components of $\PXgb$ are the
closures of the maximal dimension strata.
\end{enumerate}
\end{fact}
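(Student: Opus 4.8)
The plan is to treat (i) as the fundamental input from Alexeev's construction and to extract (ii)--(vi) from the explicit description of $\PXgb$ as a moduli space of rank-$1$ torsion-free sheaves. For (i) I would verify the axioms of Definition~\ref{SSAPP} one at a time: pure dimensionality and seminormality of $\PXgb$ are built into the construction of the degree-$(g-1)$ compactified Jacobian (the agreement of \cite{OS}, \cite{simpson} and \cite{caporaso} being recorded in \cite{alex}), while the finiteness of the orbit set and the fact that the stabilizers are connected, reduced, and contained in the toric part of $J(X)$ follow from the description of the $J(X)$-action as tensorization by degree-$\mo$ line bundles.

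I would prove (ii) and (iv) together. A rank-$1$ torsion-free sheaf on $X$ that fails to be locally free exactly along a set $S\subseteq\sing$ is the push-forward ${\nS}_*L$ of a line bundle $L$ on $\YS$, and conversely every such push-forward is torsion-free of rank $1$. Tracking the stability condition through $\nS$ turns it into the Basic Inequality \eqref{BIm} on the multidegree of $L$, so that the sheaves of a fixed orbit-type are classified by $\md\in\BYS$ together with $L\in\Pic^{\md}\YS$; this gives the canonical identification $\PSd\cong\Pic^{\md}\YS$ and the decomposition \eqref{strata-comp-pic}. Since $\Pic^{\md}\YS$ is a torsor under $\Pic^{\mo}\YS=J(\YS)$ and tensorization factors through the pull-back $J(X)\twoheadrightarrow J(\YS)$ by functoriality of $\nS^{*}$, the torsor statement of (iv) is immediate. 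The dimension count is then bookkeeping on the dual graph: $J(\YS)$ is an extension of $\prod_i J(C_i)$ by a torus of rank $b_1(\Gamma_X\smallsetminus S)$, and since $\Gamma_X(S)$ has edge set $S$ and vertex set the components of $\YS$ one gets $b_1(\Gamma_X(S))=\#S-\gS+1$ (as $X$, hence $\Gamma_X(S)$, is connected), yielding $\dim\PSd=g-\#S+\gS-1=g-b_1(\Gamma_X(S))$.

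Part (iii) is the combinatorial core, and I would prove it independently of the geometry. If $\YS$ has a separating node $n$, write $\YS=Z_1\cup Z_2$ with $Z_1\cap Z_2=\{n\}$; neither $Z_i$ is a union of connected components of $\YS$, so a stable $\md$ would force, by integrality, $d_{Z_i}\ge g_{Z_i}$ for $i=1,2$. Summing and using $g_{\YS}=g_{Z_1}+g_{Z_2}$ gives $g_{\YS}-1=d_{Z_1}+d_{Z_2}\ge g_{\YS}$, a contradiction, so $\BYS=\emptyset$. For the converse one constructs a stable multidegree on a separating-node-free curve directly, e.g.\ by an inductive assignment of degrees along the dual graph. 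The ``in particular'' clause follows since a separating node of $X$ outside $S$ remains separating on $\YS$, so $\BYS\neq\emptyset$ forces $\sep\subseteq S$.

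Finally, (v) and (vi). For (v) I would use semicontinuity: a specialization inside $\PXgb$ degenerates a line bundle to a sheaf that is non-locally-free along a larger set and whose degree on each component can only drop, giving $S\subseteq S'$ and $\md\ge\md'$; the precise form is recorded in \cite{ctheta}. For (vi), by (iv) the top-dimensional strata are those minimizing $b_1(\Gamma_X(S))$; enlarging $S$ by a non-separating node of $\YS$ raises $b_1$ by one, so the minimum $b_1=0$ is attained precisely at the smallest admissible $S$, which by (iii) is $S=\sep$ (separating nodes are bridges, contributing nothing to $b_1$), giving dimension $g$ and the indexing $\md\in\Sigma(Y_{\sep})$. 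By the SSAV structure from (i) the smooth locus of $\PXgb$ is exactly the union of the open orbits, and its irreducible components are their closures. The main obstacle I anticipate is not any single assertion but fixing, once and for all, the dictionary between the sheaf-theoretic orbit picture underlying (ii) and (v) and the combinatorial index $(S,\md)$; with that dictionary in place the remaining claims are either formal (the torsor and factorization statements) or purely combinatorial (the dimension formula, (iii), and the closure relations).
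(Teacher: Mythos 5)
Two remarks before the verdict. First, the paper itself gives no proof of this statement: it is labelled a \emph{Fact} and is recalled from \cite{alex} and \cite{ctheta} (ultimately going back to \cite{OS} and \cite{caporaso}), so the comparison here is between your sketch and the cited literature rather than an internal argument. Second, your reconstruction of (ii)--(v) is indeed the standard route taken in those sources, and it is essentially sound: the dictionary between rank-$1$ torsion-free sheaves failing local freeness exactly on $S$ and push-forwards ${\nS}_*L$ of line bundles on $\YS$, the translation of stability into the inequality \eqref{BIm}, the torsor structure of $\Pic^{\md}\YS$ under $J(\YS)$ with the Euler-characteristic bookkeeping $\dim J(\YS)=g-\#S+\gS-1=g-b_1(\Gamma_X(S))$, and the contradiction argument for the forward implication of (iii) are all correct. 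The converse of (iii) is only asserted (``inductive assignment of degrees''); the actual standard argument produces a stable multidegree from a totally cyclic orientation of $\Gamma_X\smallsetminus S$, which exists precisely when there are no separating edges --- a fact this paper itself recalls later in \ref{ordeg}, citing \cite{beau} and \cite{ctheta}. That is a thin spot, but the claim is classical.

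The genuine gap is in (vi). You assert that ``by the SSAV structure from (i) the smooth locus of $\PXgb$ is exactly the union of the open orbits.'' This does not follow from the SSAV axioms: the axioms of Definition~\ref{SSAPP}(i)--(iii) are satisfied by $\Gm$ acting on $\pr{1}$ (seminormal, connected, projective, pure dimension $1=\dim\Gm$; three orbits; stabilizers $\Gm$ or trivial, hence connected, reduced, and contained in the toric part), yet $\pr{1}$ is smooth everywhere, so its smooth locus strictly contains the open orbit. Thus the inclusion ``smooth locus $\subseteq$ union of maximal strata'' is not formal; it requires input specific to $\PXgb$, namely the local structure of the compactified Jacobian along the boundary strata (\'etale-locally a product of nodal singularities, one for each node where the sheaf fails to be locally free, times a smooth factor), which is what \cite{caporaso}, \cite{alex} and \cite{ctheta} actually establish via the deformation theory or the GIT construction. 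The rest of your (vi) is fine: the identification of the maximal strata with those supported on $S=\sep$ follows from your (iii) together with the observation that adding any non-separating edge to $S$ strictly increases $b_1(\Gamma_X(S))$, and the statement on irreducible components follows from the finiteness of the orbit set together with the closure relations (v). So the conclusion you want is true, but the quoted step, as written, would fail.
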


To give the definition of the theta divisor 
we introduce some  notation.
For any multidegree $\md$ on a curve $Y$ and for any $r\geq 0$ we set
\begin{equation}
\label{BN}
W^{r}_{\md}(Y):=\{L\in \Pic^{\md}Y: h^0(Y,L)>r\};
\end{equation}
when $r=0$ the superscript is usually omitted:
$W^{0}_{\md}(Y)=W_{\md}(Y)$.

The normalization  of $X$ at its  set of separating nodes, $\sep$, will be
denoted  by
\begin{equation}
\label{notsep}
\tX=\coprod _{i=1}^{\widetilde{\gamma}}\tX_i
\end{equation}
where the $\tX_i$ are   connected
(and all free from separating nodes). Note that
 $\widetilde{\gamma}=\# \sep+1$.
 We denote by
$\tilde{g}_i$ the arithmetic genus  of $\tX_i$.

The subsequent facts
summarize results of \cite{est}, \cite{alex} and \cite{ctheta}.
\begin{defi}
The theta divisor $\Theta(X)$ of $\PXgb$ is
$$
\Theta(X):=\overline{\bigcup_{\md \in \Sigma(\tX)}
W_{\md}(\tX)}\subset \PXgb .
$$
\end{defi}

\begin{fact}
\label{Theta}

\begin{enumerate}[(i)]
\item
The pair $(J(X)\curvearrowright\PXgb, \TXg)$ is a ppSSAP. In particular
$\TXg$ is Cartier, ample
and $h^0(\PXgb,  \TXg)=1$.

\item

The stratification of $\PXgb$ given by \ref{Comp-Pic}(ii)
induces the stratification
\begin{equation}
\label{strata-theta}
\Theta(X)=\coprod_{\stackrel{\emptyset\subseteq S \subseteq\sing}{\md \in
\BYS}}\TSd,
\end{equation}
where $\TSd:= \{M\in \PSd\: : \: h^0(\YS, M)>0\}\cong W_{\md}^0(\YS)$
is a divisor in $\PSd$.

\item
Let $\YS=\sqcup_{i=1}^{\gamma_S} Y_{i}$ be the decomposition of $\YS$ in
connected components, and  let $\md \in
\BYS$.
The irreducible components of  $\TSd$ are given by
$$(\TSd)_i=\{L\in \PSd\: :\: h^0(Y_{i}, L_{|Y_{i}})>0\},
$$
for every $1 \leq i\leq  \gamma_S$ such that the arithmetic genus of
$Y_i$ is positive.

\end{enumerate}
\end{fact}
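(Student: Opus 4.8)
The stable semi-abelic axioms (i)--(iii) of Definition~\ref{SSAPP} are already furnished by Fact~\ref{Comp-Pic}(i), so for part (i) it only remains to verify the theta axiom \ref{SSAPP}(iv) for $\TXg$: that it is an effective ample Cartier divisor, containing no $J(X)$-orbit, with $h^0(\PXgb,\O_{\PXgb}(\TXg))=1$. The plan is to first reduce to the case where $X$ has no separating node. Indeed, normalizing $X$ at $\sep$ splits $\PXgb$ as the product $\prod_i \overline{P^{\tilde g_i-1}_{\tX_i}}$ of the compactified Jacobians of the connected pieces $\tX_i$ of $\tX$ (dimensions match since $g=\sum_i\tilde g_i$), under which $\Sigma(\tX)=\prod_i\Sigma(\tX_i)$ and $\TXg$ becomes the product polarization $\sum_i \mathrm{pr}_i^*\Theta(\tX_i)$; as the properties in \ref{SSAPP}(iv) are stable under such products it suffices to treat each $\tX_i$, i.e. to assume $X$ is free from separating nodes, so that $\BX\neq\emptyset$ by Fact~\ref{Comp-Pic}(iii). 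In that case I would identify $\TXg$ with the zero scheme of the canonical theta section of the determinant-of-cohomology line bundle $\mathcal L_\Theta$ on $\PXgb$ (legitimate since $\deg=g-1$ gives Euler characteristic $0$). This simultaneously exhibits $\TXg$ as an effective Cartier divisor, shows $\mathcal L_\Theta$ is ample with $h^0=1$, and---because the theta section is nonzero at the generic point of every stratum---shows $\TXg$ contains no orbit. The equality of this determinantal description with the closure definition $\overline{\bigcup_{\md\in\Sigma(\tX)}W_\md(\tX)}$ is precisely what is imported from \cite{est}, \cite{alex}, \cite{ctheta}.

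For part (ii) I would compute $\TXg\cap\PSd$ one stratum at a time. Over the smooth locus, Fact~\ref{Comp-Pic}(vi) identifies the maximal strata as those $\PSd$ with $S=\sep$, each isomorphic to $\Pic^\md(\tX)$ for $\md\in\Sigma(\tX)$, where $\TXg$ restricts to $W_\md(\tX)=\TSd$ by definition; this gives the stratification over $\PXg$. For a lower stratum $\PSd$, $S\supsetneq\sep$, recall that the objects parametrized by $\PXgb$ are rank-one torsion-free sheaves on $X$, the point $M\in\PSd\cong\Pic^\md(\YS)$ corresponding to the pushforward $\nu_{S*}M$, for which $h^0(X,\nu_{S*}M)=h^0(\YS,M)$. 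Every point of the closure $\TXg$ lying in $\PSd$ is a specialization of line bundles $L_t\in W_{\md'}(\tX)$ on the maximal strata, so upper semicontinuity of $h^0$ along the flat universal family of sheaves on $X$ yields $h^0(\YS,M)\ge 1$, i.e. $\TXg\cap\PSd\subseteq\TSd$. The reverse inclusion is a density statement: each $M$ with $h^0(\YS,M)>0$ must be produced as a limit of such $L_t$; equivalently, the theta section of $\mathcal L_\Theta$ restricts, via $\PSd\cong\Pic^\md(\YS)$, to the intrinsic theta section of $\Pic^\md(\YS)$, whose zero locus is $W^0_\md(\YS)$. Either way $\TXg\cap\PSd=\TSd$, and with part (i) this is a divisor in $\PSd$.

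Part (iii) then follows by taking the connected decomposition $\YS=\sqcup_i Y_i$ into account. Since $h^0(\YS,M)=\sum_i h^0(Y_i,M_{|Y_i})$, the locus $\TSd$ is the union of the $(\TSd)_i=\{M:h^0(Y_i,M_{|Y_i})>0\}$. Under $\PSd\cong\prod_i\Pic^{\md_i}(Y_i)$ each $(\TSd)_i$ is the product of $W^0_{\md_i}(Y_i)$ with the remaining abelian torsors, hence irreducible of codimension one as soon as $W^0_{\md_i}(Y_i)$ is; the latter is the theta divisor of the curve $Y_i$, which is free from separating nodes and carries the stable multidegree $\md_i$, so its irreducibility is exactly the Brill--Noether input recorded in \cite{ctheta}. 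Finally, a connected component $Y_i$ of arithmetic genus $0$ free from separating nodes must be a single $\pr{1}$, on which stability forces $\deg M_{|Y_i}=g_{Y_i}-1=-1$ and hence $h^0(Y_i,M_{|Y_i})=0$; such components therefore contribute no irreducible component, matching the stated range $g_{Y_i}>0$.

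The main obstacle is the boundary analysis underlying parts (i) and (ii): neither the fact that the closure $\overline{\bigcup_\md W_\md(\tX)}$ is Cartier and defines a principal polarization ($h^0=1$), nor the identification of its restriction to a lower stratum $\PSd$ with the intrinsic theta locus $W^0_\md(\YS)$, is formal from the SSAV structure of Fact~\ref{Comp-Pic}. Both rest on controlling how the theta section---equivalently, how $h^0$ of the parametrized sheaves---behaves as a line bundle on $\tX$ degenerates to a line bundle on $\YS$, which is the degeneration and base-change analysis of the determinant of cohomology carried out in \cite{est} and \cite{ctheta}. Once that compatibility is available, parts (ii) and (iii) reduce to bookkeeping with the stratification of Fact~\ref{Comp-Pic}(ii) and with the additivity of $h^0$ over connected components.
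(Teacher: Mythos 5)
The paper offers no internal proof of this statement: it is labelled a Fact precisely because it summarizes results imported from \cite{est}, \cite{alex} and \cite{ctheta}, and your sketch defers exactly the same non-formal ingredients (the Cartier property and $h^0=1$ for $\TXg$, and the identification of $\TXg\cap \PSd$ with $W_{\md}(\YS)$ on lower strata) to those references, so your approach is essentially the paper's own. The bookkeeping you supply around those citations --- the reduction to $\sep=\emptyset$, upper semicontinuity for the inclusion $\TXg\cap\PSd\subseteq\TSd$, additivity of $h^0$ over connected components, and ruling out genus-$0$ components in part (iii) because stability forces degree $-1$ on a $\pr{1}$ --- is correct.
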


\begin{remark}
\label{ss}
From the  description  \ref{Comp-Pic}, we derive  that
there exists
a unique $J(X)$-stratum in $\PXgb$ contained in the closure of every
other
stratum, namely
$$
P_{\sing}^{(g_1-1, \ldots, g_{\gamma}-1)}=\prod_{i=1}^{\gamma}
\Pic^{g_i-1}C_i.
$$
We refer to this stratum as the {\it smallest stratum} of $\PXgb$.
Moreover, according to stratification (\ref{strata-theta}), the restriction of $\TXg$   to the smallest stratum is given by
\begin{equation}
\label{thetasmall}
\TXg_{|P_{\sing}^{(g_1-1, \ldots, g_{\gamma}-1)}}=\bigcup_{i=1}^{\gamma}
\Pic^{g_1-1}C_1\times \cdots\times \Theta(C_i)\times \cdots\times
\Pic^{g_{\gamma}-1}C_{\gamma}.
\end{equation}
\end{remark}

We can now state  the following result of Alexeev (\cite{alex}):

\begin{thm} The classical Torelli morphism   is compactified
by the morphism  $\tgb:\mgb\to \agale$ which maps a stable curve
$X$
to    $(J(X)\curvearrowright \PXgb, \TXg)$. 
 \end{thm}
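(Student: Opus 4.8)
The plan is to establish three things: that on each stable curve the assignment $X\mapsto (J(X)\curvearrowright\PXgb,\TXg)$ produces an object of the moduli problem of $\agale$; that this assignment globalizes to a genuine morphism of coarse moduli schemes; and that its restriction to the smooth locus recovers the classical Torelli map $\tg$ under the open immersion $A_g\hookrightarrow\agale$ of Theorem~\ref{alexthm}. The first point is already in hand: by Fact~\ref{Theta}(i), for every stable curve $X$ the triple $(J(X)\curvearrowright\PXgb,\TXg)$ is a principally polarized stable semi-abelic pair in the sense of Definition~\ref{SSAPP}, so that $\tgb$ is at least a well-defined map on closed points.

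To promote this to a morphism I would invoke the coarse moduli property of $\agale$ from Theorem~\ref{alexthm}. The essential input is that the compactified Picard scheme and its theta divisor exist in families and commute with base change: for a family of stable curves $\mathcal{X}\to B$ one has a relative compactified Picard scheme $\overline{P}^{\,g-1}_{\mathcal{X}/B}\to B$ acted on by the relative generalized Jacobian $J(\mathcal{X}/B)$, together with a relative theta divisor, compatible with the fiberwise constructions of \cite{caporaso} and \cite{alex}. Applying Fact~\ref{Theta}(i) on each fiber shows this is a family of ppSSAPs over $B$, hence by the coarse moduli property it induces a classifying map $B\to\agale$. Feeding in the universal family over the stack $\overline{\mathcal{M}}_g$ and descending to the coarse space $\mgb$ then yields the morphism $\tgb$.

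Finally, to see that $\tgb$ compactifies $\tg$, I would restrict to the open subset $M_g\subset\mgb$ of smooth curves. If $X$ is smooth then $\sing=\emptyset$, the torus part of $J(X)$ vanishes so that $J(X)=\Pic^0X$ is an abelian variety, the compactified Picard scheme $\PXgb$ consists of the single stratum $\Pic^{g-1}X$, a torsor under $J(X)$, and the Definition of $\Theta(X)$ collapses to the classical Brill--Noether locus $W_{g-1}(X)$, that is, the Riemann theta divisor. Hence the pair is precisely the principally polarized abelian variety classically attached to $X$, and under the identification of $A_g$ with the open locus of principally polarized abelic pairs inside $\agale$ we obtain $\tgb|_{M_g}=\tg$.

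I expect the main obstacle to be the second step: one must know that the relative compactified Picard functor is representable and well behaved under base change, and that the relative theta divisor restricts on each fiber to the divisor $\TXg$ of the Definition above, so that the total family is honestly a family of ppSSAPs and the coarse moduli property of $\agale$ can be applied. This is exactly where the foundational machinery of \cite{alex} on families of stable semi-abelic pairs, together with its compatibility with the compactified Jacobians of \cite{caporaso}, \cite{OS} and \cite{simpson}, does the work.
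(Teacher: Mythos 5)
There is a peculiarity you should know about: the paper contains no proof of this statement at all. It is introduced with the words ``We can now state the following result of Alexeev (\cite{alex})'' and is quoted from that reference, exactly as Facts~\ref{Comp-Pic} and~\ref{Theta} are quoted from the literature; so there is nothing internal to compare your argument against, and the fair comparison is with the strategy of the cited source. Measured that way, your outline is the right one and agrees with how the result is actually established: the fiberwise statement is Fact~\ref{Theta}(i); the globalization goes through families of ppSSAPs and the coarse moduli property of $\agale$ from Theorem~\ref{alexthm}, working over the stack $\overline{\mathcal{M}}_g$ and descending to $\mgb$ by the universal property of the coarse space, as you say; and on $M_g$ the pair $(\Pic^0X\curvearrowright\Pic^{g-1}X,\,W_{g-1}(X))$ is the classical principally polarized Jacobian, so $\tgb|_{M_g}=\tg$ under the identification of $A_g$ with the locus of abelic pairs. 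The one caution is that your first and third steps are the easy parts, and all of the substance sits in the step you explicitly defer: the existence of a relative compactified Picard scheme over a family of stable curves, with a relative theta divisor that is Cartier, flat, compatible with base change, and restricts fiberwise to $\TXg$, together forming a family of ppSSAPs in Alexeev's sense. That is precisely the content of \cite{alex} (building on \cite{caporaso}, \cite{OS}, \cite{simpson} and \cite{est}), so your proposal, like the paper itself, is ultimately a reduction to that reference rather than an independent proof --- which is a faithful reconstruction of the situation, but you should present it as such.
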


\end{nota}
\subsection{First reductions}
We shall now show that the ppSSAP $(J(X)\curvearrowright \PXgb, \TXg)$  depends only on the stabilization of every connected
component  of the partial normalization
of $X$ at its separating nodes. Most of what is in this subsection is well known to the experts.

We first recall the notion of    stabilization.
 A connected nodal curve $X$ of arithmetic genus $g\geq 0$ is called \emph{stable} if
each smooth rational component $E\subsetneq X$ meets the complementary subcurve
$E^c=\ov{X\setminus E}$ in at least three points. So, when
 $g=0$ the only stable curve is  $\pr{1}$. If $g=1$ a stable curve is either smooth
 or irreducible with one node. If $g\geq 2$ stable curves are Deligne-Mumford stable curves.

Given any nodal connected curve $X$, the {\it stabilization}   of $X$ is defined as the curve  $\ov{X}$ obtained as follows. If $X$ is stable then 
$X=\ov{X}$; otherwise let $E\subset X$ be an  exceptional component
(i.e.  $E\subsetneq X$ such that $\#E\cap E^c\leq 2$ and $E\cong \pr{1}$), then we contract $E$
to a point, thereby obtaining a new curve $X_1$. If $X_1$ is stable we let $X_1=\ov{X}$, otherwise we choose an exceptional component of $X_1$  and contract it to a point. By iterating this process 
we certainly arrive at a stable curve $\ov{X}$.
It is easy to check that $\ov{X}$  is unique up to isomorphism.

The stabilization of a non connected curve will be defined as the union of the stabilizations of its connected components.

From the moduli properties of  $\Agale$, and the fact that it is a projective scheme, one derives the following useful
\begin{remark}{\it (Invariance under stabilization.)}\label{stab}
Let $X$ be a connected nodal curve of arithmetic genus $g\geq 0$, and let ${\ov{X}}$ be its stabilization.
Then
 $$
 (J(X)\curvearrowright \PXgb, \TXg)\cong
 (J({\ov{X}})\curvearrowright \overline{P_{\ov{X}}^{g-1}}, \Theta({\ov{X}})).
 $$\end{remark}

Now, we show how to deal with separating nodes.
To do that we   must deal with disconnected curves.
Let $Y=\coprod_{i=1}^h Y_i$ be such a curve and $g_Y$ its arithmetic genus,
so that $g_Y=\sum g_{Y_i} -h$.
We have
\begin{equation}
\label{disc}
 \ov{P^{g_Y-1}_Y}=\prod_{i=1}^h\ov{P^{g_{Y_i}-1}_{Y_i}}\  \  \  \text{ and }\   \  \  \Theta(Y)=\bigcup_{i=1}^h\pi_i^*(\Theta(Y_i))
\end{equation}
where $\pi_i: \ov{P^{g_Y-1}_Y}\to \ov{P^{g_{Y_i}-1}_{Y_i}}$ is the $i$-th projection.

The   next Lemma illustrates the recursive structure of $(\PXgb, \Theta(X))$.
For $S\subset \sing$  such that $\BYS$ is non empty (i.e.  $Y_S$ has no separating nodes), denote
\begin{equation}
\label{stratumcl}
\PSb:=\ov{\bigcup_{\md \in \BYS}\PSd}\subset \PXgb \ \   \text{ and  }\  \  \TSb:=\Theta(X)\cap \PSb.
\end{equation}
\begin{lemma}
\label{stratum} Assumptions as above.
There is a natural isomorphism $\PSb \cong \ov{P^{g_{Y_S}-1}_{Y_S}}$,
inducing an isomorphism between $\TSb$ and $\Theta(Y_S)$.
\end{lemma}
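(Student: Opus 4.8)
The plan is to realise the desired isomorphism concretely, via push-forward of sheaves along the partial normalization $\nS:\YS\to X$, using the description of $\PXgb$ as a moduli space of rank-$1$ torsion-free sheaves of degree $g-1$ (the models of \cite{OS}, \cite{simpson}, \cite{caporaso}, which by \cite{alex} agree with $\PXgb$). First I would record the numerics: if $M$ is a line bundle on $\YS$ of degree $g_{\YS}-1$, then $(\nS)_*M$ is a rank-$1$ torsion-free sheaf on $X$ with $\chi(X,(\nS)_*M)=\chi(\YS,M)=0$, hence of degree $g-1$; moreover $(\nS)_*M$ fails to be locally free exactly along $S$, and one recovers $M$ from $(\nS)_*M$, so push-forward is injective on isomorphism classes. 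Thus $(\nS)_*$ sends the open stratum $\Pic^{\md}\YS$ of $\overline{P^{g_{\YS}-1}_{\YS}}$ isomorphically onto the stratum $\PSd\subset\PXgb$, for each $\md\in\BYS$.

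Next I would extend this to the boundary. A point of $\overline{P^{g_{\YS}-1}_{\YS}}$ lying in a boundary stratum is a torsion-free sheaf $N=(\mu_T)_*M'$, where $\mu_T:(\YS)_T=Y_{S\cup T}\to\YS$ is the normalization of $\YS$ at a set $T$ of nodes of $\YS$ (so $T\subseteq\sing\smallsetminus S$) and $M'$ is a line bundle of some multidegree $\me\in\Sigma(Y_{S\cup T})$. Since $(\nS)_*(\mu_T)_*=(\nu_{S\cup T})_*$, push-forward carries this stratum onto $P_{S\cup T}^{\me}$. By Fact~\ref{Comp-Pic}(v) the strata contained in $\PSb$ are exactly those $P_{S'}^{\md'}$ with $S\subseteq S'$, i.e. those with $S'=S\cup T$; hence $(\nS)_*$ defines a bijection of $\overline{P^{g_{\YS}-1}_{\YS}}$ onto $\PSb$, matching the two stratifications compatibly with the $J(\YS)$-actions (the $J(X)$-action on $\PSb$ factoring through $J(X)\twoheadrightarrow J(\YS)$ by Fact~\ref{Comp-Pic}(iv)).

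It remains to promote this set-theoretic bijection to an isomorphism of schemes, and this is the step I expect to be the main obstacle. The point is that push-forward along the finite map $\nS$ takes flat families of stable sheaves to flat families of stable sheaves, so by the moduli property of the compactified Jacobian it is induced by an actual morphism $\Phi:\overline{P^{g_{\YS}-1}_{\YS}}\to\PXgb$ with image $\PSb$; here the compatibility of the stability conditions under push-forward is the technical heart, and is available from \cite{caporaso}, \cite{alex}. The morphism $\Phi$ is proper and quasi-finite, hence finite, and bijective onto $\PSb$ with all residue fields equal to $k$. Since $\overline{P^{g_{\YS}-1}_{\YS}}$ is reduced and $\PSb$ is seminormal — being a union of irreducible components of the seminormal variety $\PXgb$, equivalently a stable semi-abelic variety under $J(\YS)$, cf. Definition~\ref{SSAPP}(ii) — the universal property of seminormality forces $\Phi$ to be an isomorphism onto $\PSb$.

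Finally I would transport the theta divisor. Because $\nS$ is affine, $H^0(X,(\nS)_*N)=H^0(\YS,N)$ for every sheaf $N$, so push-forward preserves the condition $h^0>0$; comparing with the stratum descriptions $\TSd\cong W_{\md}(\YS)$ of Fact~\ref{Theta}(ii) and the analogous stratification of $\Theta(\YS)$, one sees that $\Phi$ carries $\Theta(\YS)$ onto $\TXg\cap\PSb=\TSb$. Since $\Phi$ is an isomorphism it preserves closures, so $\Phi(\Theta(\YS))=\TSb$ as closed subschemes, yielding the asserted isomorphism $\TSb\cong\Theta(\YS)$ and completing the proof.
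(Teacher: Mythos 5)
Your set-theoretic analysis (push-forward along $\nS$ matching the strata of $\ov{P^{g_{Y_S}-1}_{Y_S}}$ with the strata of $\PSb$) is correct and is a genuinely different route from the paper's, but the decisive final step has a real gap: your finite bijection $\Phi$ goes \emph{from} $\ov{P^{g_{Y_S}-1}_{Y_S}}$ \emph{into} $\PSb$, and the universal property of seminormality only allows you to conclude that a finite bijection with trivial residue field extensions is an isomorphism when the \emph{target} is seminormal; seminormality of the source buys nothing (the normalization $\A^1\to \{y^2=x^3\}$ is a finite bijection with trivial residue extensions from a smooth, hence seminormal, curve and is not an isomorphism). Both of your justifications that the target $\PSb$ is seminormal collapse. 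First, $\PSb$ is \emph{not} a union of irreducible components of $\PXgb$: by Fact~\ref{Comp-Pic}(vi) the irreducible components of $\PXgb$ are the closures of the maximal-dimension strata, whereas by (\ref{codimS}) the strata supported on $S$ have codimension $b_1(\Gamma_X(S))$, which is positive as soon as $S$ contains a non-separating node --- in particular it equals one in the crucial application of Lemma~\ref{stratum} in Step 3 of Subsection~\ref{tornec}, where $S$ is a C1-set; and in any case seminormality does not pass to closed subvarieties. Second, asserting that $\PSb$ is a stable semi-abelic variety under $J(\YS)$ (so seminormal by Definition~\ref{SSAPP}(ii)) is circular: up to the isomorphism being proved, that is precisely the content of the lemma, and you give no independent argument for it.

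This directionality issue is exactly what the paper's proof is engineered to avoid: it constructs the morphism the other way, $\ov{\mu}:\PSb\to \ov{P^{g_{Y_S}-1}_{Y_S}}$, by restricting the GIT presentation $q:V_X\to\PXgb$ to $V_Y=q^{-1}(\PSb)$, removing the exceptional $\pr{1}$-bundles from the universal curve over $V_Y$ so as to obtain a family mapping to $Y_S$ with a relatively semistable line bundle, and descending the induced moduli map by $G$-invariance; then bijectivity plus seminormality of the target $\ov{P^{g_{Y_S}-1}_{Y_S}}$ --- which \emph{is} known, since the compactified Jacobian of $Y_S$ is a product of SSAVs by (\ref{disc}) and Fact~\ref{Comp-Pic}(i) --- forces $\ov{\mu}$ to be an isomorphism. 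Working on the GIT cover also fixes a secondary defect in your construction of $\Phi$: $\ov{P^{g_{Y_S}-1}_{Y_S}}$ is only a coarse moduli space of S-equivalence classes (strictly semistable points abound in degree $g_{Y_S}-1$), so it carries no universal family, and ``push-forward of flat families plus the moduli property'' does not by itself produce a morphism \emph{out of} it; that part is repairable by pushing forward the universal family on a GIT cover of $\ov{P^{g_{Y_S}-1}_{Y_S}}$ and descending. The seminormality gap, however, is not repairable within your framework short of an independent proof that orbit closures such as $\PSb$ are seminormal, which neither you nor the facts quoted in the paper provide.
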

\begin{proof}
Recall that $\PXgb$ is  a GIT-quotient, $V_X\stackrel{q}{\la} \PXgb=V_X/G$
where $V_X$ is contained in a  certain Hilbert scheme of curves in projective space
(there are other   descriptions of $\PXgb$ as a GIT-quotient, to  which the subsequent proof can be easily adjusted).
Denote $V_Y:=q^{-1}(\PSb)$ so that $V_Y$ is a G-invariant, reduced,  closed subscheme of $V_X$ and
$\PSb$ is the GIT-quotient
\begin{equation}
\label{VY}
V_Y \la V_Y/G=\PSb.
\end{equation}
The restriction to $V_Y$ of the universal family over the Hilbert scheme is a family of nodal curves
$\ZZ \to V_Y$ endowed with a semistable line bundle $\L \to \ZZ$.
Let $Z$ be any fiber of
  $\ZZ \to V_Y$; then $Z$ has $X$ as stabilization, and the stabilization map $Z\to X$ blows-up
 some set $S'$ of nodes of $X$; note that $S'$ certainly contains $S$.
  Therefore the exceptional divisors corresponding to $s\in S$ form a family over $V_Y$
   $$
  \ZZ \supset  {\mathcal E}_S\to V_Y,
   $$
    such that ${\mathcal E}_S=\coprod_{s\in S}{\mathcal E}_s$ and every  ${\mathcal E}_s$ is a $\pr{1}$-bundle over $V_Y$.
Consider the family of curves obtained by removing ${\mathcal E}_S$:
  $$
  \Y:=\ov{  \ZZ \smallsetminus   {\mathcal E}_S}\to V_Y.
  $$
  By construction the above is a family of nodal curves, all admitting a surjective map
  to $Y_S$ which blows down some exceptional component (over a dense open subset of $V_Y$
  the fiber of $\Y \to V_Y$ is isomorphic to $Y_S$).
  The restriction $\L_{\Y}$ of $\L$ to $\Y$ is a relatively semistable line bundle. Therefore $\L_{\Y}$ determines
 a unique moduli map  $ \mu$ from $V_Y$ to the compactified Picard variety of $Y_S$, i.e.
  $
 \mu:V_Y \to \ov{P^{g_{Y_S}-1}_{Y_S}}.
  $
  The map $\mu$ is of course $G$-invariant, and therefore   it descends to a unique map $\ov{\mu}:V_Y/G  \la \ov{P^{g_{Y_S}-1}_{Y_S}}$.
  Summarizing, we have a commutative diagram
$$\xymatrix{
V_Y\ar@{->>}[rr]^{\mu}
\ar@{->>}[dr]& &  \ov{P^{g_{Y_S}-1}_{Y_S}} \\
& {V_Y/G=\PSb} \ar@{->>}[ru]^{\ov{\mu}}&
}$$

  By Fact~\ref{Comp-Pic} the morphism $\ov{\mu}$ is a bijection. Since  $\ov{P^{g_{Y_S}-1}_{Y_S}}$ is seminormal,
$\ov{\mu}$ is an isomorphism.
Finally, by  Fact~\ref{Theta}   we conclude that   $\ov{\mu}$ maps $\TSb$ isomorphically to   $\Theta(Y_S)$.
  \end{proof}
We say that a ppSSAP $(G\curvearrowright P, \Theta)$  is
\emph{irreducible}
if every  irreducible component of $P$ contains a unique irreducible
component of $\Theta$. In the next result we use the notation (\ref{notsep}).
\begin{cor}
\label{Jacobian-irr}
\begin{enumerate}[(i)]
 \item  If $\sep=\emptyset$   then $(J(X)\curvearrowright \PXgb,$
$ \TXg)$ is irreducible.
\item  In general, we have the decomposition into irreducible non-trivial
ppSSAP:
$$(J(X)\curvearrowright \PXgb, \TXg)=\prod_{\tilde{g}_i>0}
(J(\tX_i)\curvearrowright \overline{P_{\tX_i}^{\tilde{g}_i-1}},
\Theta(\tX_i)).$$
\end{enumerate}
\end{cor}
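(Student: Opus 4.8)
The plan is to deduce this corollary directly from Lemma~\ref{stratum} together with the product decompositions recorded in \eqref{disc}. The key observation is that the partial normalization $\tX$ of $X$ at its separating nodes is itself free from separating nodes, so that $\BYS$ is nonempty for $S=\sep$ by Fact~\ref{Comp-Pic}(iii); thus $\PSb[\sep]$ is defined, and I would apply Lemma~\ref{stratum} with $S=\sep$.

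For part (ii), first I would show that $\PSb[\sep]$ is in fact all of $\PXgb$, i.e. that the closed stratum attached to the separating nodes is not proper. This follows from Fact~\ref{Comp-Pic}(iii), which asserts that $\sep\subseteq S$ for every $S$ with $\BYS\neq\emptyset$: hence every stratum $\PSd$ appearing in the decomposition \eqref{strata-comp-pic} has $S\supseteq\sep$, so every stratum lies in $\overline{\bigcup_{\md\in\Sigma(Y_{\sep})}P^{\md}_{\sep}}=\PSb[\sep]$, forcing $\PSb[\sep]=\PXgb$ and likewise $\TSb[\sep]=\Theta(X)$. Then Lemma~\ref{stratum} gives
$$
(J(X)\curvearrowright\PXgb,\TXg)\cong
(J(\tX)\curvearrowright\ov{P^{g_{\tX}-1}_{\tX}},\Theta(\tX)).
$$
Finally, since $\tX=\coprod_{i=1}^{\widetilde{\gamma}}\tX_i$ is disconnected into its free-from-separating-node pieces, I would feed this into \eqref{disc}, which factors the pair on $\tX$ as the product $\prod_i (J(\tX_i)\curvearrowright\ov{P^{\tilde g_i-1}_{\tX_i}},\Theta(\tX_i))$ (using that $J$ of a disjoint union is the product of the $J$'s). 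The factors with $\tilde g_i=0$ are trivial (a point), so they may be discarded, leaving exactly the asserted decomposition over $\{\,i:\tilde g_i>0\,\}$.

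Part (i) is then the special case $\sep=\emptyset$: here $\tX=X$ is connected and free from separating nodes, the product in (ii) has a single factor, and irreducibility is precisely the content of Fact~\ref{Comp-Pic}(vi) combined with Fact~\ref{Theta}(iii). Concretely, when $\sep=\emptyset$ the irreducible components of $\PXgb$ are the closures $\overline{P^{\md}_{\emptyset}}$ for $\md\in\Sigma(Y)$, and I would check via Fact~\ref{Theta}(iii) that on such a top-dimensional stratum (where $Y_S=\Xn$ has $\gamma$ connected components) the theta divisor contributes one irreducible component per positive-genus component of $\Xn$; the point is to verify that each irreducible component of $\PXgb$ meets $\Theta(X)$ in a single irreducible component. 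The definition of ``irreducible'' ppSSAP matches this exactly.

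The main obstacle I anticipate is the bookkeeping in part (i): one must show that distinct irreducible components of the theta divisor do not collapse onto the same component of $\PXgb$, and conversely that each component of $\PXgb$ carries a \emph{unique} theta component. This requires tracking the combinatorics of which top strata $\overline{P^{\md}_{\emptyset}}$ contain which divisorial pieces $(\TSd)_i$ from Fact~\ref{Theta}(iii), and invoking the seminormality and the ampleness of $\TXg$ from Fact~\ref{Theta}(i) to rule out degenerate incidences. By contrast, the reduction in part (ii) is essentially formal once Lemma~\ref{stratum} and \eqref{disc} are in hand, so the real content is isolating the connected, separating-node-free case (i), which then bootstraps to the general statement.
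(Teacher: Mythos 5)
Your treatment of part (ii) is essentially the paper's own proof: the paper likewise applies Lemma~\ref{stratum} with $S=\sep$, observes that in this case $\PSb=\PXgb$ and hence $\TSb=\TXg$, and then invokes the decomposition (\ref{disc}) for the disconnected curve $\tX$, discarding the factors with $\tilde{g}_i=0$. One small repair: the equality $\PSb=\PXgb$ does not follow from Fact~\ref{Comp-Pic}(iii) in the way you argue. Knowing that every nonempty stratum $\PSd$ in (\ref{strata-comp-pic}) has $S\supseteq\sep$ does not by itself place that stratum inside the closure of the strata supported exactly on $\sep$; Fact~\ref{Comp-Pic}(v) gives only the reverse implication (containment in a closure forces inclusion of supports). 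The clean justification is Fact~\ref{Comp-Pic}(vi): the irreducible components of $\PXgb$ are the closures of the maximal-dimensional strata, i.e.\ of the strata $P^{\md}_{\sep}$ with $\md\in\Sigma(Y_{\sep})$, so their union, which is $\PSb$ for $S=\sep$, is all of $\PXgb$.

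Part (i), however, contains a genuine error. For a top-dimensional stratum one has $S=\emptyset$, and $Y_{\emptyset}=X$ itself, which is \emph{connected}; you instead identify $Y_S$ with the total normalization $\Xn$, which has $\gamma$ connected components. That identification is correct only for the \emph{smallest} stratum, $S=\sing$ (cf.\ Remark~\ref{ss} and (\ref{thetasmall})). As a consequence, your count of ``one theta component per positive-genus component of $\Xn$'' on each top stratum is false, and if it were true it would contradict the very statement being proved, since an irreducible ppSSAP must have exactly one theta component in each irreducible component of $\PXgb$. The correct argument --- which is what the paper's one-line citation of \cite[Thm 3.1.2]{ctheta} supplies, and what Fact~\ref{Theta}(iii) asserts in the case $\gamma_S=1$ --- is: for each $\md\in\Sigma(X)$ the divisor $\TXg\cap P^{\md}_{\emptyset}=W_{\md}(X)$ is irreducible, because $Y_{\emptyset}=X$ is connected of positive genus. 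Hence the irreducible components of $\TXg$ are the closures $\ov{W_{\md}(X)}$, $\md\in\Sigma(X)$, and each irreducible component $\ov{P^{\md}_{\emptyset}}$ of $\PXgb$ contains $\ov{W_{\md}(X)}$ and no other: for $\me\neq\md$ the stratum $P^{\me}_{\emptyset}$, which contains $W_{\me}(X)$, is disjoint from $\ov{P^{\md}_{\emptyset}}$, since by Fact~\ref{Comp-Pic}(v) the closure of one top stratum can contain no other top stratum. No appeal to seminormality or to the ampleness of $\TXg$ is needed; the ``degenerate incidences'' you worry about are ruled out by the stratification alone.
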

\begin{proof}
The first assertion follows from \cite[Thm 3.1.2]{ctheta}.
For the second assertion, by \ref{Comp-Pic} we have
$J(X)= \prod_{i=1}^{\widetilde{\gamma}} J(\tX_i)$.
Now we apply Lemma~\ref{stratum} to $S=\sep$.
Note that in this case
$\PSb = \PXgb$, and hence $\TSb=\Theta(X)$.  Therefore
we get
$$
(\PXgb, \Theta(X))\cong (\ov{P_{\tX}^{g_{\tX}-1}}, \Theta(\tX))\cong \prod_{\tilde{g}_i>0} (\overline{P_{\tX_i}^{\tilde{g}_i-1}},
\Theta(\tX_i)).
$$
  \end{proof}

\section{Statement of the main theorem}
\label{C1sec}
\subsection{C1-equivalence.}
\label{poset}
Assume that $\sep =\emptyset$.
We  introduce two
  partially ordered sets (posets for short)  associated to the stratification of $\PXgb$ into
$J(X)$-orbits, described in  (\ref{strata-comp-pic}).

\noindent
$\bullet$ The {\it poset of strata}, denoted
$\ST_X$, is the set  $\{P_S^{\md}\}$   of all strata  of $\PXgb$,
endowed with the following partial order:
\begin{equation}
\label{STeq}
P_S^{\md}\geq P_T^{\me}\Longleftrightarrow \overline{P_S^{\md}}\supset
P_T^{\me}.
\end{equation}

\noindent
$\bullet$
The {\it poset of (strata) supports}, denoted $\SP_X$, is the set of all
subsets $S\subset \sing$ such that the
partial normalization of $X$ at $S$,
$Y_S$, is free from separating
nodes, or equivalently (recall \ref{GammaXS}):
\begin{equation}
\label{SPeq}
\SP_X:=
\{S\subset E(\Gamma_X)| \  \Gamma_X\smallsetminus S  \text{ has no  separating edge} \}.
\end{equation}
Its partial order is defined
as follows:
$$S\geq T \Longleftrightarrow  S\subseteq T.$$
 There is a natural map
$$
\supp_X:\ST_X   \la \SP_X; \  \  \  \
P_S^{\md}   \mapsto S.
$$
$\supp_X$ is order preserving
 (by Fact \ref{Comp-Pic}(\ref{cpstr})), and surjective
(by Fact \ref{Comp-Pic}(iii)).

We have the integer valued function, $\codim$, on $\SP_X$ (cf.  \ref{GammaXS} and (\ref{dimpic})):
\begin{equation}\label{codimS}
\codim( S):=\dim J(X)-\dim J(Y_S)= b_1 (\Gamma_X(S)).
\end{equation}
Notice that $\codim(S)$ is the codimension in $\PXgb$ of every stratum $\PSd\in
\supp_X^{-1}(S)$.
 Moreover $\codim$ is  strictly order reversing.
\begin{lemmadefi}
\label{C1set}  Assume   $\sep =\emptyset$;
let $S\in \SP_X$.
We say that  $S$
 is a {\emph {C1-set}}  if   the two equivalent conditions below hold.
 \begin{enumerate}
\item
\label{C1set1}
 $\codim( S)=1$.
\item
\label{C1set2}
The graph $\Gamma_X(S)$ (defined in \ref{GammaXS}) is a   cycle.
\end{enumerate}
We denote by $\Set X$ the set of all C1-sets of $X$.
\end{lemmadefi}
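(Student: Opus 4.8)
The plan is to prove the two implications separately, the substance in each case coming from the Euler-characteristic identity $b_1(\Gamma) = \#E(\Gamma) - \#V(\Gamma) + (\text{number of connected components})$ together with the standing hypothesis $\sep=\emptyset$. Throughout I would use that $\Gamma_X(S)$ is \emph{connected}, being a contraction of the connected graph $\Gamma_X$, and that by construction (see \ref{GammaXS}) its edge set is identified with $S$ while its vertices correspond to the $\gamma_S$ connected components of $\YS$. Combined with $\codim(S)=b_1(\Gamma_X(S))$ from (\ref{codimS}), this yields $\codim(S)=\#S-\gamma_S+1$. The implication (\ref{C1set2})$\Rightarrow$(\ref{C1set1}) is then immediate: if $\Gamma_X(S)$ is a cycle it has $h$ edges, $h$ vertices, and is connected, so $\codim(S)=h-h+1=1$.

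For the converse (\ref{C1set1})$\Rightarrow$(\ref{C1set2}), I would first rewrite $\codim(S)=1$ as $\#S=\gamma_S=:h$, noting $h\geq 1$ (since $\Gamma_X(\emptyset)$ is a point with $b_1=0$). Because the sum of the valencies of the vertices of $\Gamma_X(S)$ equals $2\#S=2h$ (each edge, loops counted twice, contributing $2$), and a connected graph with $h\geq 1$ edges and $h$ vertices all of valency $2$ is by definition a cycle, the whole statement reduces to showing that every vertex of $\Gamma_X(S)$ has valency exactly $2$. Since the $h$ valencies are nonnegative integers summing to $2h$, it suffices to prove that each one is $\geq 2$.

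This last point is the crux, and is where $\sep=\emptyset$ enters. Let $w$ be a vertex of $\Gamma_X(S)$, corresponding to a connected component $Y_w$ of $\YS$, and let $X_w:=\nS(Y_w)\subseteq X$ be the associated subcurve. A node of $X$ joining $X_w$ to its complement $X_w^c$ must lie in $S$, since a node outside $S$ survives in $\YS$ and therefore has both branches in a single connected component; moreover such connecting nodes are precisely the non-loop edges of $\Gamma_X(S)$ incident to $w$, whereas loops at $w$ are nodes internal to $X_w$. Hence if $w$ had valency $\leq 1$, then either $X_w$ would be a connected component of $X$ (valency $0$, impossible as $X$ is connected and $h\geq 1$), or $X_w$ would meet $X_w^c$ in a single node $s$ (valency $1$), making $s$ a separating node of $X$; both possibilities contradict our hypotheses. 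Thus every valency is $\geq 2$, and the argument is complete.

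The main obstacle is exactly this translation of the graph-theoretic condition ``valency $\geq 2$'' into the geometric statement ``$X$ has no separating node'': one must verify carefully that the valency of $w$ counts precisely the nodes through which $X_w$ is attached to the rest of $X$, correctly distinguishing loops (nodes internal to $X_w$, contributing $2$ but no attachment) from the genuine connecting nodes. Once this bookkeeping is in place, the equivalence drops out; it is worth observing that the argument uses only $\sep=\emptyset$ and the connectedness of $X$, the membership $S\in\Sp$ serving merely to guarantee that the stratum and its codimension are defined.
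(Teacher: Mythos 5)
Your proof is correct and is essentially the paper's argument: both hinge on $\codim(S)=b_1(\Gamma_X(S))$ together with the fact that, because $X$ is connected and $\sep=\emptyset$, the contracted graph $\Gamma_X(S)$ is connected and free from separating edges, hence is a cycle exactly when its first Betti number equals $1$. The paper disposes of this graph-theoretic step in one sentence, while your valency-$\geq 2$ count (ruling out valency $\leq 1$ via connectedness of $X$ and the creation of a separating node) is precisely the written-out proof of that cited fact.
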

\begin{proof}
The equivalence between (\ref{C1set1}) and  (\ref{C1set2}) follows from (\ref{codimS}), together with the fact that
for any $S\subset \sing$ the graph $\Gamma_X(S)$  is  connected  and free from separating edges (because the same holds for $\Gamma_X$).
\end{proof}
\begin{nota}
\label{setg}
Under the identification between the nodes of $X$ and the edges of $\Gamma(X)$,
our definition of C1-sets of $X$ coincides  with  that of  C1-sets of $\Gamma(X)$ given  in \cite[Def. 2.3.1]{CV}.
  The set of   C1-sets of
any graph $\Gamma$, which is a useful tool in graph theory,  is denoted     by $ \Set \Gamma$;
  we shall, as usual, identify
 $ \Set \Gamma_X=\Set X$.
The following fact   is   a rephrasing of     \cite[Lemma 2.3.2]{CV}.
\end{nota}
\begin{fact}
\label{C1lm}
Let $X$ be a connected curve free from separating nodes.
\begin{enumerate}
\item
Every node of $X$ is contained in a unique C1-set.
\item
Two nodes of $X$ belong to the same C1-set if and only if the corresponding edges
of  the dual graph $\Gamma_X$ belong to the same cycles of  $\Gamma_X$.
\item
\label{C1lm3}
Two nodes $n_1$ and $n_2$ of $X$ belong to the same C1-set if and only if the normalization of $X$ at $n_1$ and $n_2$ is disconnected.
\end{enumerate}
\end{fact}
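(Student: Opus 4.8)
The plan is to reduce the three statements to a single combinatorial fact about the bridgeless graph $\Gamma_X$ (recall $\sep=\emptyset$). Introduce on the edge set $E(\Gamma_X)$ the relation $\approx$ defined by $e_1\approx e_2$ if and only if $e_1$ and $e_2$ lie in exactly the same cycles of $\Gamma_X$. This is manifestly an equivalence relation, and since $\Gamma_X$ has no separating edge every edge lies in at least one cycle, so the $\approx$-classes partition $E(\Gamma_X)$. I would prove the Fact by showing two things: that the $\approx$-classes are \emph{exactly} the C1-sets (this gives statements (1) and (2) simultaneously), and that $e_1\approx e_2$ is equivalent to $\Gamma_X\smallsetminus\{e_1,e_2\}$ being disconnected (by \ref{GammaXS} this is the dual graph of the normalization at the two corresponding nodes, so this gives (3)).

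For the disconnection criterion, note that since $\Gamma_X$ is connected and bridgeless, deleting a single edge preserves connectedness; hence $\Gamma_X\smallsetminus\{e_1,e_2\}$ is disconnected precisely when $\{e_1,e_2\}$ is a minimal edge cut, splitting $\Gamma_X$ into two parts. As any cycle meets a minimal cut in an even number of edges, such a cycle contains both or neither of $e_1,e_2$, so $e_1\approx e_2$. Conversely, if $\Gamma_X\smallsetminus\{e_1,e_2\}$ is connected, a path joining the two endpoints of $e_1$ inside it closes up with $e_1$ to a cycle through $e_1$ avoiding $e_2$, so $e_1\not\approx e_2$.

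The core of the argument is identifying the $\approx$-classes with the C1-sets. First, each C1-set $S$ is contained in a single $\approx$-class: by Lemma--Definition~\ref{C1set} the graph $\Gamma_X(S)$ is a cycle with edge set $S$, and under the contraction $\Gamma_X\to\Gamma_X(S)$ every cycle of $\Gamma_X$ maps to an element of the (one-dimensional) cycle space of $\Gamma_X(S)$; since the only even subgraphs of a cycle are $\emptyset$ and the whole cycle, each cycle of $\Gamma_X$ contains all of $S$ or none of it, so the edges of $S$ are $\approx$-equivalent. Conversely, each $\approx$-class $T$ is a C1-set. The key sublemma is that a connected bridgeless graph all of whose edges lie in the same cycles must itself be a single cycle: if its first Betti number were $\geq 2$, deleting two independent chords of a spanning tree would leave the tree intact, hence leave the graph connected, so by the criterion above those two chords would not lie in the same cycles. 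I would apply this sublemma to $\Gamma_X(T)$, which is connected and bridgeless; lifting any hypothetical second cycle of $\Gamma_X(T)$ back to a cycle of $\Gamma_X$ (joining consecutive $T$-edges by paths inside the connected components of $\Gamma_X\smallsetminus T$) would exhibit two edges of $T$ separated by a cycle of $\Gamma_X$, contradicting $T\in E(\Gamma_X)/\!\approx$; hence $\Gamma_X(T)$ is a cycle, i.e. $\codim(T)=1$. To place $T$ in $\Sp$ one checks that $\Gamma_X\smallsetminus T$ is bridgeless: a bridge $b$ of a component $Z$ of $\Gamma_X\smallsetminus T$ is either a bridge of $\Gamma_X$ (impossible) or is crossed by every cycle through the two $T$-edges incident to $Z$, forcing $b\approx$ those edges and hence $b\in T$, a contradiction. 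Finally, to promote the inclusion $S\subseteq T$ for a C1-set $S$ inside its class $T$ to an equality: any $f\in T\smallsetminus S$ lies, since $S\in\Sp$ makes $\Gamma_X\smallsetminus S$ bridgeless, on a cycle contained in $\Gamma_X\smallsetminus S$; this cycle avoids $S$ while $f\approx e$ for $e\in S$, a contradiction. Thus $S=T$.

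I expect the main obstacle to be the converse inclusion, namely showing that each $\approx$-class $T$ is a genuine C1-set: this requires both the cycle-lifting step through the contraction $\Gamma_X\to\Gamma_X(T)$ and the separate verification that $\Gamma_X\smallsetminus T$ is bridgeless, the two points where the interplay between \emph{contracting} the complement of $T$ and \emph{deleting} $T$ must be reconciled. Once the $\approx$-classes are identified with the C1-sets, statement (2) is immediate, statement (1) is the partition property of $\approx$, and statement (3) follows from the disconnection criterion of the second paragraph.
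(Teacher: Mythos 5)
Your proposal is correct, but there is nothing in the paper to compare it with line by line: the paper does not prove this Fact at all, it states it as ``a rephrasing of [CV, Lemma 2.3.2]'' and outsources the proof to that reference. So what you have produced is a self-contained substitute for the citation. Your route --- introduce the relation $e_1\approx e_2$ iff the two edges lie in exactly the same cycles, translate $\approx$ into the two-edge-cut criterion via the even-intersection property of cycles and cuts, and then identify the $\approx$-classes with the C1-sets (contraction/cycle-space argument for one inclusion, the ``bridgeless + all edges in the same cycles $\Rightarrow$ cycle graph'' sublemma plus lifting for the other) --- is the same circle of ideas that underlies the graph-theoretic treatment in [CV], so in substance you have reproved the cited lemma rather than found a new mechanism; what your version buys is that the statement becomes verifiable without leaving the paper, at the cost of about a page of elementary graph theory. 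Two steps are compressed but sound and worth a sentence each. First, the lift of a cycle $c$ of $\Gamma_X(T)$ to $\Gamma_X$: a priori joining consecutive $T$-edges by paths only yields a closed walk, but since $c$ is simple it visits each vertex of $\Gamma_X(T)$ (i.e.\ each connected component of $\Gamma_X\smallsetminus T$) at most once, so the connecting paths lie in pairwise disjoint components and the result is genuinely a simple cycle containing $e_1$ and avoiding $e_2$, as you assert. Second, in the bridgelessness check, the assertion that every cycle through the two $T$-edges $f_1,f_2$ incident to $Z$ must cross the bridge $b$ needs the remark that such a cycle meets $Z$ in a single path joining the attaching points of $f_1$ and $f_2$, and that these attaching points are forced (by the parity of intersections with the cut determined by $b$) to lie in different components of $Z\smallsetminus b$; equivalently $\{b,f_1\}$ is a two-edge cut, which is exactly what makes your criterion give $b\approx f_1$ and hence the contradiction $b\in T$. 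Finally, since dual graphs of nodal curves are multigraphs, one should note that all of this (cut parity, spanning-tree chords, contraction) is insensitive to loops and multiple edges --- e.g.\ a loop lies only in the cycle consisting of itself and is never part of a cut, which is consistent with its C1-set being a singleton; your arguments do go through in this generality.
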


\begin{remark}
\label{C1-part} Therefore, if $\sep=\emptyset$ the C1-sets form a partition of $\sing$.  The preimage under the normalization map $\nu$ of this partition is a partition 
 of the set   of gluing points, $\nu^{-1}(\sing)\subset \Xn$.   We shall refer to this partition  of $\nu^{-1}(\sing)$  as the
 {\it C1-partition}.
 \end{remark}
 The main result of this paper, Theorem~\ref{main} below, is based on the following
\begin{defi}[C1-equivalence.]
\label{C1}
Let $X$ and $X'$ be connected nodal curves free from separating nodes;
 denote by $\nu:\Xn \to X$ and $\nu':\Xnn \to X'$  their normalizations.
 $X$ and $X'$ are {\it {C1-equivalent}} if the following conditions hold
\begin{enumerate}[(A)]
\item\label{C1n} There exists an isomorphism $\phi: X^{\nu}\stackrel{\cong}{\to} X'^{\nu}$.
\item \label{C1S}
There exists a bijection between their C1-sets, denoted by
$$
\Set X \to \Set X';\  \  \  S\mapsto S'
$$
such that $\phi(\nu^{-1}(S))=\nu'^{-1}(S')$.
\end{enumerate}

In general,   two   nodal curves $Y$ and $Y'$ are C1-equivalent if there exists
a bijection between their connected components  such that every two corresponding components
are C1-equivalent.
\end{defi}

With the terminology introduced in Remark~\ref{C1-part},  we can informally state that two curves free from separating nodes are C1-equivalent if and only if they have the same
normalization, $Y$, the same set of gluing points $G\subset Y$, and the same C1-partition of $G$.

\begin{example}
\label{C1eg}
\begin{enumerate}
\item
Let $X$ be irreducible. Then for every node $n\in \sing$ the set $\{n\}$ is a C1-set, and every C1-set of $X$ is obtained in this way. 
It is clear that the only curve C1-equivalent to $X$ is $X$ itself.
\item
Let $X=C_1\cup C_2$ be the union of two smooth components meeting at $\delta\geq 3$ nodes (the case $\delta=2$ needs to be treated apart, see below).
 Then again for every $n\in \sing$ we have that $\{n\}\in \Set  X$  so that   $\Set X\cong \sing$.
Also in this case $X$  is the only curve in its C1-equivalent class. The same holds if the $C_i$ have some node.
\item
Let $X$ be such that its dual graph is a cycle of length at least 2. Now the only C1-set is the whole $\sing$
and, apart from some special cases, $X$ will not be the unique curve in its C1-equivalent class;
see  example~\ref{C1ex} and section~\ref{fibsec} for details.
\end{enumerate}
\end{example}
\begin{thm}\label{main}
Let  $X$ and $X'$ be two stable curves of genus $g$.

Assume that $X$ and $X'$ are
free from separating nodes.  Then
 $\tgb(X)=\tgb(X')$
if and only if $X$ and $X'$ are C1-equivalent.

In general, let $\widetilde{X}$ and
$\widetilde{X'}$ be the normalizations of $X$ and $X'$ at their separating nodes.
Then
 $\tgb(X)=\tgb(X')$ if and only if the stabilization of $\widetilde{X}$  is C1-equivalent to the
stabilization of $\widetilde{X'}$.
\end{thm}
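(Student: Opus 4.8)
The strategy is to reduce the general statement to the separating-node-free case, then prove that case by splitting it into a necessity and a sufficiency direction. First I would dispose of the reduction: by Remark~\ref{stab} the pair $(J(X)\curvearrowright\PXgb,\TXg)$ is unchanged under stabilization, and by Corollary~\ref{Jacobian-irr}(ii) it decomposes as the product of the irreducible ppSSAP's attached to the connected components $\tX_i$ of the normalization $\tX$ at the separating nodes. Since an isomorphism of ppSSAP's must respect this decomposition into irreducible factors (matching the irreducible components of $P$ together with their unique $\Theta$-components), one sees that $\tgb(X)=\tgb(X')$ holds if and only if the irreducible factors can be matched up to isomorphism. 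Each factor $(J(\tX_i)\curvearrowright\overline{P_{\tX_i}^{\tilde g_i-1}},\Theta(\tX_i))$ depends only on the stabilization of $\tX_i$, which is free from separating nodes. Thus the general claim follows once we know the theorem for stable curves with $\sep=\emptyset$, applied componentwise, matching C1-equivalence of the disconnected normalizations with the componentwise definition of Definition~\ref{C1}.

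\emph{The core case, $\sep=\emptyset$.} For the sufficiency direction (C1-equivalence implies $\tgb(X)=\tgb(X')$), the plan is to invoke the second characterization promised in the introduction: one reformulates C1-equivalence as ``T-equivalence,'' a condition phrased directly in terms of the classifying data of the semiabelian varieties $J(X)$ and $J(X')$ and the combinatorics underlying Alexeev's construction of ppSSAP's. Granting the equivalence of the two notions (this is the content deferred to Section~\ref{Tsec}), the sufficiency is then formal: two curves with the same T-data produce, by the general theory of degenerations of principally polarized abelian varieties, isomorphic ppSSAP's, hence the same Torelli image. I would present this direction as essentially immediate once the dictionary between C1-equivalence and T-equivalence is in hand.

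\emph{The necessity direction} is the substantive part and I would organize it in two stages. Stage one recovers the normalization: the smallest stratum of $\PXgb$ (Remark~\ref{ss}) is the unique $J$-stratum contained in the closure of every other, and is isomorphic to $\prod_i\Pic^{g_i-1}C_i$, while the restriction of $\TXg$ to it is given by (\ref{thetasmall}). An isomorphism of ppSSAP's carries smallest stratum to smallest stratum and respects the theta restriction, so by the classical Torelli theorem applied to each positive-genus $C_i$ one recovers $\Xn\cong\Xnn$ up to rational components. Stage two recovers the gluing data and the C1-partition: here I would analyze the poset $\ST_X$ of strata, or equivalently the combinatorial poset $\SP_X$, extracting the C1-sets as the supports of codimension-one strata (Lemma--Definition~\ref{C1set}), and then use the geometry of $\Theta$ — in particular its interaction with the Abel maps of the curves — to pin down how the branches over the nodes are glued, i.e.\ the actual C1-partition of the gluing set $G$ rather than merely the abstract set of C1-sets. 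The main obstacle is precisely this last point: recovering the normalization up to isomorphism and recovering the cyclic-equivalence class of the dual graph from the stratification poset still leaves the \emph{gluing} (which pair of branches is identified at each node, and the partition of $G$) undetermined by the group-theoretic and poset data alone; it is the fine structure of the theta divisor, and nothing coarser, that distinguishes C1-equivalent-but-nonisomorphic curves from genuinely non-equivalent ones. I expect the bulk of the difficulty, and the essential use of $\Theta$ via the Abel maps, to be concentrated in extracting the C1-partition of $G$ and verifying it matches across $X$ and $X'$.
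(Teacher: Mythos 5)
Your plan follows essentially the same route as the paper: the same reduction via Remark~\ref{stab} and Corollary~\ref{Jacobian-irr}, the same sufficiency argument via the C1/T-equivalence dictionary of Section~\ref{Tsec} and Alexeev--Namikawa degeneration data, and the same two-stage necessity argument (smallest stratum plus classical Torelli to recover $\Xn$; the strata poset to recover the C1-sets and cyclic equivalence class of $\Gamma_X$; the theta divisor and Abel maps, i.e.\ Proposition~\ref{theta}, to recover the C1-partition of the gluing set). The only organizational detail you leave implicit is that the paper carries out your ``stage two'' by induction on the number of nodes, applying Lemma~\ref{stratum} to pass to the partial normalizations $Y_S$ and invoking the statement for curves with fewer nodes.
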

By Example~\ref{C1eg} we have that  if $X$ is irreducible, or if $X$
is the union of two components meeting in at least three points,
then the Torelli map is injective (i.e. $\tg^{-1}(\tg(X)=\{X\}$). The locus of curves $X\in  \Mgb$ such that $\tg^{-1}(\tg(X)=\{X\}$ will be characterized in Theorem~\ref{Tinj}. Theorem~\ref{main} will be proved in Section~\ref{proofsec}.

\subsection{Some properties of  C1-sets}
\label{C1part}
Here are a few facts to be applied later.
 
\begin{remark}
\label{C1st}
Let $S\in \Set X$ and
consider $Y_S$, the normalization of $X$ at $S$. By definition $Y_S$ has $\#S$ connected components,
and $\Gamma_X(S)$ can be viewed as the graph whose vertices are the connected components of $Y_S$, and whose edges correspond to $S$.
Since  $\Gamma_X(S)$ is a cycle,  if $X$ is stable  every connected component of $\YS$ has positive arithmetic genus.
 \end{remark}

\begin{lemma}
\label{ST}
Let $S$ and $T$ be two distinct C1-sets of $X$. Then $T$ is entirely contained in a unique connected component of $Y_S$.
\end{lemma}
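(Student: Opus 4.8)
The plan is to translate the statement into a purely graph-theoretic assertion about $\Gamma_X$ and then to exhibit a single cycle that witnesses the conclusion. First I would record the dictionary supplied by \ref{GammaXS}: the connected components of $\YS$ are in natural bijection with the vertices of $\Gamma_X(S)$, equivalently with the connected components of $\Gamma_X\smallsetminus S$. Under this bijection each node in $T$ is an edge of $\Gamma_X$ not lying in $S$, hence it survives in $\YS$ and sits in the component of $\Gamma_X\smallsetminus S$ containing that edge. Therefore the Lemma is equivalent to the claim that all edges of $T$ lie in one and the same connected component of $\Gamma_X\smallsetminus S$; the uniqueness assertion is then automatic.

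The heart of the argument will be to produce a \emph{single} cycle $\Gamma_0\subseteq\Gamma_X$ that contains every edge of $T$ and meets $S$ in no edge. Granting such a $\Gamma_0$, the proof finishes immediately: a cycle is connected and $\Gamma_0$ is disjoint from $S$, so $\Gamma_0$ is contained in one connected component of $\Gamma_X\smallsetminus S$, and that component then contains all of $T$. To build $\Gamma_0$ I would fix any $t\in T$ and any $s\in S$. Since $X$ is free from separating nodes, $s$ is not a separating edge, so $\Gamma_X\smallsetminus s$ is connected. Because $S\neq T$ are distinct C1-sets, Fact~\ref{C1lm}(1) shows $s$ and $t$ lie in different C1-sets, whence by Fact~\ref{C1lm}(\ref{C1lm3}) the normalization of $X$ at $\{s,t\}$ is connected, i.e. $\Gamma_X\smallsetminus\{s,t\}$ is connected. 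Thus removing $t$ from the connected graph $\Gamma_X\smallsetminus s$ keeps it connected, so $t$ is not a separating edge of $\Gamma_X\smallsetminus s$ and hence lies on a cycle $\Gamma_0$ of $\Gamma_X\smallsetminus s$. This $\Gamma_0$ passes through $t$ and avoids $s$.

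It remains to upgrade ``$t\in\Gamma_0$, $s\notin\Gamma_0$'' to ``$T\subseteq\Gamma_0$, $S\cap\Gamma_0=\emptyset$'', and here I would invoke Fact~\ref{C1lm}(2): every edge of $T$ belongs to exactly the same cycles of $\Gamma_X$ as $t$, so since $t\in\Gamma_0$ all of $T$ lies on $\Gamma_0$; dually, every edge of $S$ belongs to the same cycles as $s$, so since $s\notin\Gamma_0$ no edge of $S$ lies on $\Gamma_0$. The main obstacle is precisely this construction of $\Gamma_0$, and in particular the need to choose the cycle through $t$ rather than through $s$: what makes it work is that the disconnecting-pair criterion of Fact~\ref{C1lm}(\ref{C1lm3}), together with the absence of separating edges, controls the two directions simultaneously via the elementary fact that an edge lies on a cycle exactly when it is non-separating. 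Once $\Gamma_0$ is in hand the reduction of the first paragraph completes the proof.
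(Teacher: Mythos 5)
Your proof is correct, and it takes a genuinely different route from the one in the paper. The paper's proof stays on the curve side and argues with separating nodes: it picks two nodes $n_1,n_2\in T$, normalizes $X$ and $Y_S$ at $n_1$, uses Fact~\ref{C1lm}(\ref{C1lm3}) (for two nodes in the \emph{same} C1-set) to see that $n_2$ becomes a separating node of these partial normalizations, and concludes that $n_1$ and $n_2$ lie in the same connected component of $Y_S$ --- for otherwise the component of $Y_S$ containing $n_2$ would be unaffected by the normalization at $n_1$, so $n_2$ would remain non-separating, contradicting that $Y_S$ is free from separating nodes (which holds because C1-sets belong to $\SP_X$). You argue on the graph side instead: you apply Fact~\ref{C1lm}(\ref{C1lm3}) in the opposite direction, to the nodes $s\in S$ and $t\in T$ lying in \emph{different} C1-sets, together with the bridge/cycle dichotomy, to produce a cycle through $t$ avoiding $s$, and then invoke Fact~\ref{C1lm}(2) twice to sweep all of $T$ into that cycle and keep all of $S$ out of it. Your route is slightly longer but yields a stronger intermediate statement --- a single cycle of $\Gamma_X$ containing all of $T$ and disjoint from $S$, which immediately localizes $T$ in one component of $\Gamma_X\smallsetminus S$ --- and it only uses that $X$ has no separating nodes, not that $Y_S$ has none; the paper's argument is shorter and is the natural one if one thinks of C1-sets via disconnecting pairs of nodes rather than via cycles.
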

\begin{proof}
Recall that $Y_S$ has $\#S$ connected components, all free from separating nodes.
By Fact~\ref{C1lm} the set $T$ is contained in the singular locus of $Y_S$.
Let $n_1,n_2\in T$, and  let $X^*$ and $Y^*_S$  be the normalizations at $n_1$  of, respectively, $X$  and $Y_S$.
By Fact~\ref{C1lm}(\ref{C1lm3}) $n_2$ is a separating node of $X^*$ and hence of $Y_S^*$.
Since $Y_S$ has no separating node we get that $n_1$ belongs to the same connected component as $n_2$.
\end{proof} 

In the next Lemma we use  the notations of \ref{setg} and \ref{C1st}.
\begin{lemma}\label{C1dec}
Let $\Gamma$ be an oriented connected graph free from separating edges.
Then the inclusion $H_1(\Gamma,\Z) \subset C_1(\Gamma, \Z) $ factors naturally as follows
$$
H_1(\Gamma,\Z) \ha \bigoplus_{S\in \Set \Gamma}H_1(\Gamma(S),\Z) \stackrel{}{\ha} C_1(\Gamma, \Z)
$$
where the graphs $\Gamma(S)$ have the orientation induced by that of $\Gamma$.
\end{lemma}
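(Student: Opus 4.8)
The plan is to reduce the whole statement to the elementary fact that contracting edges induces a chain map on simplicial chains, hence carries $1$-cycles to $1$-cycles.

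First I would fix the ambient identification. By Fact~\ref{C1lm}(1) the C1-sets of $\Gamma$ partition its edge set $E(\Gamma)$. Since for each $S\in\Set\Gamma$ the group $C_1(\Gamma(S),\Z)$ is freely generated by $S$ (the edges of $\Gamma(S)$ are canonically identified with $S$, by Notation~\ref{GammaXS}), this partition yields a canonical decomposition
$$
C_1(\Gamma,\Z)=\bigoplus_{S\in\Set\Gamma}C_1(\Gamma(S),\Z),
$$
and I write $\pi_S\colon C_1(\Gamma,\Z)\to C_1(\Gamma(S),\Z)$ for the associated projection, which simply retains the coordinates indexed by $S$, so that $\sum_S\pi_S=\mathrm{id}$. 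Under this decomposition the second arrow of the statement is nothing but the inclusion of the subgroup $\bigoplus_S H_1(\Gamma(S),\Z)$ into $\bigoplus_S C_1(\Gamma(S),\Z)=C_1(\Gamma,\Z)$; in particular it is injective.

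The heart of the matter is to show that $\pi_S\big(H_1(\Gamma,\Z)\big)\subseteq H_1(\Gamma(S),\Z)$ for every $S\in\Set\Gamma$. For this I would observe that the contraction $q_S\colon\Gamma\to\Gamma(S)$ (collapsing every edge outside $S$, equivalently collapsing each connected component of $\Gamma\smallsetminus S$ to a vertex) is a morphism of graphs, hence induces a chain map $q_S\colon C_\bullet(\Gamma,\Z)\to C_\bullet(\Gamma(S),\Z)$. On $C_1$ this chain map sends each $e\in S$ to the corresponding edge of $\Gamma(S)$ and each $e\notin S$ to $0$ (its two endpoints are identified), so it coincides with $\pi_S$, and the induced orientation on $\Gamma(S)$ is exactly the one appearing in the statement. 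Since $H_1=\ker\partial$ for a graph and chain maps commute with $\partial$, we obtain $\partial\,\pi_S(c)=q_S(\partial c)=0$ for every $c\in H_1(\Gamma,\Z)$, that is $\pi_S(c)\in H_1(\Gamma(S),\Z)$. (Concretely this amounts to summing the cycle relation $\partial c=0$ over the vertices lying in a fixed component $K$ of $\Gamma\smallsetminus S$: the edges internal to $K$ cancel in pairs, and the only edges joining $K$ to the rest lie in $S$, so one recovers precisely the vanishing of the $K$-coordinate of $\partial\,\pi_S(c)$.)

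Combining the two steps finishes the proof. For $c\in H_1(\Gamma,\Z)$ one has $c=\sum_S\pi_S(c)$ with each $\pi_S(c)\in H_1(\Gamma(S),\Z)$, so the assignment $c\mapsto(\pi_S(c))_S$ defines the first arrow $H_1(\Gamma,\Z)\to\bigoplus_S H_1(\Gamma(S),\Z)$, and composing it with the inclusion recovers the original inclusion $H_1(\Gamma,\Z)\ha C_1(\Gamma,\Z)$; injectivity of the first arrow follows formally. I expect the only genuine point to be this chain-map step, i.e.\ checking that the restriction of a global cycle to a single C1-set is again a cycle; note that it uses the partition property (Fact~\ref{C1lm}(1)) but not the finer fact that each $\Gamma(S)$ is a cycle (Lemma-Definition~\ref{C1set}), which serves only to identify each $H_1(\Gamma(S),\Z)$ with $\Z$.
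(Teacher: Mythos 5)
Your proof is correct and follows essentially the same route as the paper's: your projections $\pi_S$ are exactly the maps induced on chains by the contractions $\sigma_S\colon \Gamma\to\Gamma(S)$ that the paper uses, and your identity $\sum_S \pi_S=\mathrm{id}$ is precisely the commutativity of the paper's square relating $H_1(\Gamma,\Z)$, $\bigoplus_S H_1(\Gamma(S),\Z)$ and the chain groups, from which the paper likewise deduces injectivity formally. The only cosmetic difference is that the paper packages the cycle-preservation step into a diagram with exact rows (involving $H_1(\Gamma\smallsetminus S,\Z)$ as the kernel of $\sigma_{S*}$), whereas you verify it by a direct chain-map computation.
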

\begin{proof}
 Let $S\in \Set \Gamma$ and consider the  natural  map $\sigma_S:\Gamma \to \Gamma(S)$
 contracting all edges not in $S$.
 Recall that $\Gamma(S)$ is a cycle whose set of edges is $S$.
By
  Fact~\ref{C1lm}  we have the following commutative diagram with exact rows
\begin{equation}\label{2graphs}
\xymatrix{
0 \ar[r] & C_1(\Gamma\smallsetminus S,\Z) \ar[r] & C_1(\Gamma, \Z) \ar[r]
& C_1(\Gamma(S),\Z)  \ar[r] & 0 \\
0 \ar[r]  & H_1(\Gamma\smallsetminus S,\Z) \ar[r] \ar@{^{(}->}[u]& H_1(\Gamma, \Z) \ar[r]^{\sigma_{S*}\  }
\ar@{^{(}->}[u] & H_1(\Gamma(S),\Z)  \ar[r] \ar@{^{(}->}[u] & 0 \\
}
\end{equation}
where $\Gamma\smallsetminus S\subset \Gamma$ is the subgraph obtained by removing $S$ from  $E(\Gamma)$.
We claim that  we have the following commutative diagram
\begin{equation}\label{Sgraphs}
\xymatrix{
H_1(\Gamma,\Z) \ar@{^{(}->}[r]^{}\ar@{^{(}->}[d] & \oplus_{S\in \Set \Gamma}H_1(\Gamma(S),\Z)\ar@{^{(}->}[d] \\
C_1(\Gamma,\Z)   &  \ar@{_{(}->}[l] ^{}\oplus_{S\in \Set \Gamma}C_1(\Gamma(S),\Z)
}
\end{equation}
where the vertical arrows are the usual inclusions. The bottom horizontal arrow
is the obvious map mapping an edge $e\in E(\Gamma (S))=S\subset E(\Gamma)$ to itself. It is   injective because two different C1-sets of $\Gamma$ are disjoint
(by \ref{C1lm}) (and surjective as $\Gamma$ has no separating edges). Finally, the top horizontal arrow is the sum
of the maps $\sigma_{S*}$ defined in the previous diagram; it is  injective because the diagram is clearly commutative and the other maps   are injective.
\end{proof}
\subsection{Gluing points and gluing data.}
Let $X$ be such that $\sep=\emptyset$, and let  $S\in \Set X$ be a C1-set   of
cardinality $h$. The partial normalization $Y_S$ of $X$ at $S$
has a decomposition $\YS=\sqcup_{i=1}^h Y_{S,i}$, with   $Y_{S,i}$
connected and free from separating nodes. We denote by $Y_{S,i}^{\nu}$ the normalization of $Y_{S,i}$.
We set
\begin{equation}
\label{GS}
G_S:=\nu^{-1}(S)\subset \Xn.
\end{equation}
Each of the connected components
$Y_{S,i}$ of $Y_S$ contains exactly two of the points in $G_S$, let us call them
$p_i$ and $q_i$.
This enables us to define a unique fixed-point free involution on $G_S$, denoted $\iota_S$,
such that  $\iota_S$ exchanges $p_i$ and $q_i$
for every $1\leq i \leq h$.

 The involutions $\iota_S$ and the curves $Y_{S,i}^{\nu}$
are the same for C1-equivalent curves, by the next result.
\begin{lemma}
\label{detinv}
Let $X$ be   free from separating nodes.
The data of $\Xn$ and of the sets $G_S\subset \Xn$ for every $S\in \Set X$ uniquely determine
the curves $Y_{S,i}^{\nu}\subset \Xn$ and the involution $\iota_S$, for every $S\in \Set X$.
\end{lemma}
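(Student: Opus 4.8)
The plan is to fix a single C1-set $S\in\Set X$ (of cardinality $h=\#S$) and to show that the data $(\Xn,\{G_T\}_{T\in\Set X})$ already determines which connected components $C_j$ of $\Xn$ belong to each block $Y_{S,i}^{\nu}$, for $i=1,\dots,h$. Once this partition of $\{C_1,\dots,C_\gamma\}$ is in hand, the involution $\iota_S$ comes for free: as recalled just before the statement (using that $\Gamma_X(S)$ is a cycle, so every vertex has valency two), each block $Y_{S,i}$ contains \emph{exactly} two points of $G_S$, and $\iota_S$ is by definition the involution exchanging the two points of $G_S$ lying in each block. So the whole problem reduces to recovering the block decomposition of $\Xn$ attached to $S$.

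To recover the blocks I would introduce the following relation on the set of components $\{C_1,\dots,C_\gamma\}$ of $\Xn$: declare $C_j$ and $C_k$ directly linked if there is a C1-set $T\neq S$ such that $G_T$ meets both $C_j$ and $C_k$, and let $\sim_S$ be the equivalence relation generated by these links. The point is that $\sim_S$ is built only out of $\Xn$ and the located sets $G_T$ — it never refers to how the branches inside a given $G_T$ are actually glued into nodes (the datum $\iota_T$, resp. the gluing data of $X$, which is precisely what may vary among C1-equivalent curves). Thus $\sim_S$ is part of the given data, and the claim to prove is that its equivalence classes are exactly the sets of components of the blocks $Y_{S,i}^{\nu}$.

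The verification splits into two inclusions. First, each link stays inside a single block: if $T\neq S$, then by Lemma~\ref{ST} the set $T$ lies in a unique connected component $Y_{S,i}$ of $Y_S$, whence $G_T=\nu^{-1}(T)\subset Y_{S,i}^{\nu}$ and every component meeting $G_T$ belongs to that one block; so each $\sim_S$-class is contained in a single $Y_{S,i}^{\nu}$. Conversely, each block is a single $\sim_S$-class: the curve $Y_{S,i}$ is connected, and the edges of its (connected) dual graph are exactly the nodes of $X$ that lie in block $i$ and are not in $S$. Given any such node $n$, its C1-set $T$ satisfies $T\neq S$ (C1-sets are disjoint and $n\notin S$), and the two branches of $n$ lie in $G_T$ on the two components joined by $n$; hence those two components are directly linked. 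Running along a path in the connected dual graph of $Y_{S,i}$ therefore produces a chain of direct links, so all components of $Y_{S,i}^{\nu}$ lie in one $\sim_S$-class. Combining the two inclusions gives that the $\sim_S$-classes are precisely the $Y_{S,i}^{\nu}$, and then $\iota_S$ is determined as explained.

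The one step that deserves care — and is really the crux — is the passage from the \emph{true} gluing of the branches of each $G_T$ (unknown, and curve-dependent) to the coarser relation $\sim_S$: one must check that this coarsening neither over-connects nor under-connects. The upper bound is supplied by Lemma~\ref{ST}, which confines each $T\neq S$ to a single block and so prevents $\sim_S$ from merging distinct blocks; the lower bound is supplied by the connectedness of each $Y_{S,i}$, which forces $\sim_S$ to merge all components of a block. That these two bounds meet exactly is what makes the block decomposition, and hence $\iota_S$ and the subcurves $Y_{S,i}^{\nu}$, depend only on $(\Xn,\{G_T\})$, as claimed; no knowledge of the pairing of branches into nodes is ever needed.
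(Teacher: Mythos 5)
Your proof is correct and takes essentially the same route as the paper's: the paper recovers each block $Y_{S,i}^{\nu}$ by iteratively growing a subcurve $Z_0\subset Z_1\subset \cdots$ out of components meeting a common $G_T$ with $T\neq S$, which is exactly the computation of your $\sim_S$-equivalence classes. Both arguments hinge on the same two ingredients, namely Lemma~\ref{ST} (so that links never leave a block) and the connectedness of each $Y_{S,i}$ together with the fact that every node lies in some C1-set (so that links exhaust each block), after which the involution $\iota_S$ is forced since each block contains exactly two points of $G_S$.
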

\begin{proof}
Pick a C1-set $S$ and let $h=\#S$. Denote $G_S:=\{r_1,\ldots, r_{2h}\}$
and $Y_S=\coprod_1^hY_i$.
We have
\begin{equation}
\label{GSY2}
\#G_S\cap Y_i^{\nu}=2
\end{equation} for every $i$.
Consider the point $r_1$ and 
 call  $Y_1^{\nu}$ the component containing it.
Let us show how to reconstruct $Y_1^{\nu}$.
Let $C_1\subset \Xn$ be the irreducible component containing $r_1$; of course $C_1\subset Y_1^{\nu}$.

Now,  by Lemma~\ref{ST}, for every   $T\in \Set X$ such that $S\neq T$, we have that
if $G_T\cap C_1\neq \emptyset$ then $T$ is entirely contained in the singular locus of $Y_1$.
In particular  every irreducible component of $\Xn$ intersecting $G_T$ is contained in $Y_1^{\nu}$.
Define the following subcurve $Z_1$ of $\Xn=\sqcup C_i$
$$
Z_1:=C_1 \sqcup  \coprod_{\stackrel{\exists T\in \Set X \smallsetminus \{S\}:}{C_i\cap G_T\neq \emptyset, C_1\cap G_T\neq \emptyset}} C_i.
$$
We now argue as before, by replacing $C_1$ with $Z_1$. We get that
if $X$ has a C1-set   $T\neq S$
such that $G_T$ intersects $Z_1$,
then again $T\subset (Y_1)_{\text{sing}}$;
therefore, by   Lemma~\ref{ST}, every component of $\Xn$ intersecting
$G_T$ is contained in $Y_1^{\nu}$.
We can hence inductively define the following subcurve of $Y_1^{\nu}$.
We rename $Z_0:=C_1$; next for $n\geq 1$ we set
$$
 Z_n:=Z_{n-1}\sqcup \coprod_{\stackrel{\exists T\in \Set X \smallsetminus  \{S\}:}{C_i\cap G_T\neq \emptyset, Z_{n-1}\cap G_T\neq \emptyset}} C_i.
$$
Since    all of the  nodes of $Y_1$ belong to some C1-set of $X$,
for $n$ large enough we have
$ 
Z_n=Z_{n+1}=\ldots=Y_1^{\nu}.
$ 
Hence $Y_1^{\nu}$ is uniquely determined.
Now, by (\ref{GSY2}) we have that
 $Y_1^{\nu}\cap G_S=\{r_1, r_j\}$ for a unique  $j\neq 1$; therefore we must have $\iota_S(r_1)=r_j$.
This shows that the curves $Y_{S,i}^{\nu}$ are all determined, and so are the involutions
$\iota_S$.
\end{proof}

\begin{nota}
\label{gldata}
{\it Gluing data of $X$.}
By Lemma~\ref{detinv}, if $X$ and $X'$ are C1-equivalent
for every pair of corresponding C1-sets $S$ and $S'$ the isomorphism between their normalizations preserves the decompositions $\YS=\coprod_{i=1}^h Y_{S,i}$
and $Y'_{S'}=\coprod_{i=1}^h Y'_{S',i}$, as well as the involutions $\iota_S$ and $\iota_{S'}$.
What extra data should one specify   to reconstruct $X$ from its C1-equivalence class?
We now give an answer to this question.
Fix $S\in \Set X$, let $h=\#S$
and $Y_S=\coprod_1^hY_i$. By Lemma~\ref{detinv} the C1-equivalence class of
$X$ determines the involution $\iota_S$ of  $G_S$.
This enables us to write $G_S=\{p_1,q_1,\ldots,p_h,q_h\}$ with $p_i, q_i\in Y_i^{\nu}$.
Of course this is not enough to
determine how $G_S$ is glued on $X$. To describe what is further needed,
we introduce   an abstract set of cardinality $2h$,
denoted $G_h=\{s_1,t_1,\ldots,s_h,t_h\}$, endowed with the
involution $\iota_h$ defined by $\iota_h(s_i)=t_i$ for every $1\leq i\leq h$.

Pick either one of the two cyclic orientations of $\Gamma_X(S)$. We claim that the
gluing data of $G_S$ determine, and are uniquely determined by, the following two items.
\begin{enumerate}
 \item A {\it marking} $\psi_S:(G_h,\iota_h)\stackrel{\cong}{\la}
(G_S, \iota_S),$ where  $\psi_S$ is a bijection 
mapping the (unordered) pair $(s_i, t_i)$ to the pair $(p_i, q_i)$.
 \item A cyclic permutation on $\{1,\ldots, h\}$, denoted by $\sigma_S$, free from fixed points.
\end{enumerate}
Indeed the points $\psi_S(s_i)$ and $\psi_S(t_i)$
correspond, respectively, to the sources and  targets of the orientation of
$\Gamma_X(S)$; the permutation $\sigma_S$ is uniquely determined by the fact
that the point $\psi_S(s_i)$ is glued to the point
$\psi_S(t_{\sigma_S(i)})$.
The opposite  cyclic orientation of $\Gamma_X(S)$ corresponds to changing
\begin{equation}
\label{invgd}
(\sigma_S, \psi_S)\mapsto (\sigma_S^{-1},\psi_S\circ \iota_h);
\end{equation}
 the above transformation defines an involution
on the set of pairs $(\sigma_S,\psi_S)$ as above.
We call the equivalence class $[(\sigma_S,\psi_S)]$, with respect to the above involution,
the \emph{gluing data} of $S$ on $X$.

Conversely, it is clear that a nodal curve $X$ is uniquely determined, within
its C1-equivalence class, by an equivalence class $[(\sigma_S,\psi_S)]$ for each $C1$-set $S\in \Set X$. In fact, $X$ is given as follows
\begin{equation*}
X=\frac{X^{\nu}}{\coprod_{S\in \Set X} \{\psi_S(s_{i})=
\psi_S(t_{\sigma_S(i)})\, :\, 1\leq i\leq \# S\}}.
\end{equation*}

The previous analysis would enable us to explicitly, and easily,
bound the cardinality of any C1-equivalence class.
We postpone this to the final section of the paper; see Lemma~\ref{number-equiv}.
\end{nota}

\subsection{Dual graphs of C1-equivalent curves}
In this subsection, we
shall prove that two C1-equivalent curves have cyclically equivalent dual graphs.
As a matter of fact, we will prove a slightly stronger result.
We first need the following

\begin{defi}\label{strong-equ}
Let $\Gamma$ and $\Gamma'$ be two graphs free from separating edges.
We say that $\Gamma$ and $\Gamma'$ are \emph{strongly cyclically equivalent}
if they can be obtained from one  another
via iterated applications of the following move,
called {\it twisting at a separating pair of edges}:

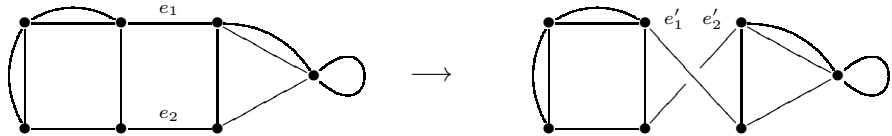
\begin{figure}[!htp]
$$\xymatrix@=1pc{
*{\bullet} \ar@{-}[rr] \ar@{-}[dd] \ar@{-}@/_/[dd] & &*{\bullet} \ar@{-}[dd]
\ar@{-}@/_/[ll]\ar@{-}[rr]^{e_1}& & *{\bullet} \ar@{-}[dd]
\ar@{-}@/^.5pc/[rrd]\ar@{-}[drr]&&&&&&
*{\bullet} \ar@{-}[rr] \ar@{-}[dd] \ar@{-}@/_/[dd] &
&*{\bullet} \ar@{-}[dd] \ar@{-}@/_/[ll] \ar@{-}[ddrr]^<<{e_1'}&&
*{\bullet} \ar@{-}[dd]\ar@{-}@/^.5pc/[rrd]\ar@{-}[drr]&& \\
&&&  &   &&*{\bullet} \ar@{-}@(ur,dr)
&&
\la
&&&&& &    &&*{\bullet} \ar@{-}@(ur,dr)                 \\
*{\bullet} \ar@{-}[rr] & &*{\bullet} \ar@{-}[rr]^{e_2}& &
*{\bullet} \ar@{-}[urr]&&
&&&& *{\bullet} \ar@{-}[rr] &  &*{\bullet} \ar@{-}[uurr]|\hole^>>{e_2'}&&
*{\bullet} \ar@{-}[urr]&&
}$$
\caption{A twisting at the separating pair of edges $\{e_1,e_2\}$.}
\label{twist}
\end{figure}
\end{defi}
The above picture means the following. Since $(e_1, e_2)$ is a separating pair of edges,
we have that $\Gamma \smallsetminus \{e_1, e_2\}$ has two connected components, call them
$\Gamma _a $ and $\Gamma_b$. For $i=1,2$ call $v_i^a$ (resp. $v^b_i$)
the vertex of $\Gamma_a$  (resp. of $\Gamma _b$) adjacent to $e_i$.
Then $\Gamma '$ is obtained by joining the two graphs $\Gamma _a $ and $\Gamma_b$
by an edge $e_1'$ from $v_1^a$ to $v_2^b$ and by another edge $e_2'$ from  $v_2^a$ to $v_1^b$.
Notice that if $v_1^a=v_2^a$  and $v_1^b=v_2^b$, our twisting operation
does not change the isomorphism class of the graph.

\begin{remark}\label{s-cyc-cyc}
If   $\Gamma$ and $\Gamma'$ are strongly cyclically equivalent then they
are cyclically equivalent.

This is intuitively clear. A cyclic bijection $E(\Gamma)\to E(\Gamma ')$ can be obtained by mapping every separating pair of edges at which a twisting is performed to its image. To check that this bijection preserves the cycles
it suffices to observe that if two edges form a separating pair then they belong to the same cycles.
Alternatively, the twisting at a separating pair of edges
is a particular instance of the so-called
second move of Whitney, which does not
change the cyclic equivalence class of a graph (see \cite{Whi}).
\end{remark}

\begin{prop}
\label{equivgr}
Let $X$ and $X'$ be  free from separating nodes and C1-equivalent.
Then $\Gamma_X$ and $\Gamma_{X'}$ are strongly cyclically equivalent
(and hence cyclically equivalent).
\end{prop}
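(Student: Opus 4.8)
The plan is to exploit the explicit description of a curve within its C1-equivalence class furnished by the gluing data of \ref{gldata}. Since $X$ and $X'$ are C1-equivalent, Definition~\ref{C1} together with Lemma~\ref{detinv} lets me fix an isomorphism $\phi:\Xn\to\Xnn$ matching the C1-sets $S\leftrightarrow S'$, the decompositions $Y_S=\coprod_i Y_{S,i}$ and $Y'_{S'}=\coprod_i Y'_{S',i}$, and the involutions $\iota_S\leftrightarrow\iota_{S'}$. By \ref{gldata} the only remaining freedom distinguishing $X$ from $X'$ is the gluing datum $[(\sigma_S,\psi_S)]$ attached to each C1-set $S$. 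Thus it suffices to show that altering the gluing datum of a single C1-set is realized by a sequence of twistings at separating pairs of edges, and then to proceed one C1-set at a time.

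The combinatorial key is that any two edges lying in the same C1-set $S$ form a separating pair in $\Gamma_X$: indeed, by Fact~\ref{C1lm}(3) the normalization of $X$ at both corresponding nodes is disconnected, which is exactly the statement that removing the two edges disconnects $\Gamma_X$. Hence I am free to twist at any pair of edges of a fixed C1-set. Moreover, by Lemma~\ref{ST} every other C1-set $T\neq S$ is contained in a single connected component $Y_{S,i}$, i.e.\ inside a single block of the cycle $\Gamma_X(S)$ (recall Remark~\ref{C1st}); consequently a twisting performed at two edges of $S$ permutes the blocks $Y_{S,i}$ as rigid pieces and leaves the internal structure, hence the gluing data, of all the other C1-sets untouched. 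This justifies the reduction to one C1-set at a time.

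It then remains to analyze a single C1-set $S$ with $\#S=h$, whose associated graph $\Gamma_X(S)$ is a cycle on the $h$ blocks. Here I would verify that a twisting at a separating pair $\{e_1,e_2\}\subset S$ reverses the arc of this cycle cut out by $e_1$ and $e_2$ --- precisely the effect of the Whitney move of Definition~\ref{strong-equ} on a cycle. Taking arcs of length two, such reversals realize all transpositions of cyclically adjacent blocks, and these generate every rearrangement of the cyclic order; together with the orientation reversal built into the equivalence $(\sigma_S,\psi_S)\mapsto(\sigma_S^{-1},\psi_S\circ\iota_h)$, this shows that the gluing datum of $X$ on $S$ can be twisted into that of $X'$ on $S'$. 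Iterating over all C1-sets transforms $\Gamma_X$ into $\Gamma_{X'}$ by twistings, so the two graphs are strongly cyclically equivalent; the final cyclic equivalence is then Remark~\ref{s-cyc-cyc}. I expect the main obstacle to be the bookkeeping in this last step: checking precisely that the Whitney twist induces an arc reversal of the block cycle (keeping track of how source/target labels, and hence the marking $\psi_S$, are affected) and that arc reversals do generate the full set of gluing data.
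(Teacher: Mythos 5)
Your proposal follows the same route as the paper's proof: the reduction to one C1-set at a time via Lemma~\ref{ST}, the observation that any two edges of a C1-set form a separating pair, and the realization of changes of gluing data by twistings acting as arc reversals on the cycle $\Gamma_X(S)$ are exactly the paper's steps. Indeed, the paper's operations (a) and (b) are precisely your arc reversals, of length one and of arbitrary length respectively.

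However, the generation argument you sketch for a single C1-set has a genuine gap. You propose to reach an arbitrary gluing datum from $[(\sigma_S,\psi_S)]$ using only length-two arc reversals (adjacent transpositions of blocks) together with the orientation reversal $(\sigma_S,\psi_S)\mapsto(\sigma_S^{-1},\psi_S\circ\iota_h)$ built into the equivalence. This handles the cyclic order $\sigma_S$, but it cannot produce an arbitrary marking $\psi_S$, and the marking is not cosmetic: the points $p_i,q_i$ may lie on \emph{different} irreducible components of the block $Y_{S,i}$, so flipping a marking genuinely changes the dual graph. The obstruction is a parity count: a length-two reversal flips the source/target labels of exactly two blocks, so any composition of such moves flips an even number of markings, while the built-in involution flips all $h$ of them. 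Hence for $h$ even, the datum obtained from $[(\sigma_S,\psi_S)]$ by flipping the marking of a single block --- which is the gluing datum of an actual curve C1-equivalent to $X$, cf.\ Example~\ref{C1ex} --- is unreachable by your moves; $h=2$ is the extreme case, where there are no proper arcs of length two at all, yet there are two distinct gluing data. The missing move is the length-one (singleton) arc reversal: twisting at the two edges of $S$ adjacent to a single block $Y_j$ flips the marking of $Y_j$ and fixes everything else. This is exactly the paper's operation (a). Once you add it, markings can be adjusted at will, and your adjacent-transposition argument (a slightly more elementary alternative to the paper's explicit conjugation formula in operation (b)) does complete the proof.
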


\begin{proof}
By the discussion in \ref{gldata}, it will be enough to show that for every $C1$-set $S\in \Set X$,
any two gluing data  associated to $S$ can be
transformed into one another by a sequence of edge twistings of the type described
 in \ref{strong-equ}.
Moreover, it is enough to consider one $C1$-set at the time,
in fact by \ref{ST}, the twisting
at a separating pair of edges $\{e_1, e_2\}$ belonging to $S\in \Set X$ does
not affect the gluing data of the other $C1$-sets.

So let us fix $S\in \Set X$ of cardinality $h$
and let  $[(\sigma_S, \psi_S)]$ be the gluing data of $S$ on $X$.
We consider two types of edge-twisting, as in \ref{strong-equ}:
\begin{enumerate}
 \item[(a)] Fix a component $Y_j$ of $Y_S$,
 exchange the two gluing points lying on $Y_j$,
$\psi_S(s_j)$ and $\psi_S(t_j)$, and leave everything
 else unchanged.
 On  $\Gamma _X$ this operation corresponds to a
  twisting at the separating
pair of edges of $S$ that join $\Gamma_{Y_j}$ with $\Gamma _{Y_S\smallsetminus Y_j}$
(both viewed as subgraphs of $\Gamma_X$).
The gluing data
are changed according to the rule
$$[(\sigma_S, \psi_S)]\mapsto [(\sigma_S, \psi_S\circ {\rm inv}_j)],$$
where ${\rm inv}_j$ is the involution of   $\{s_1,t_1,\ldots,s_h,t_h\}$
 exchanging $s_j$ with $t_j$ and fixing everything else.
\item[(b)] Fix a connected component $Y_j$ of $Y_S$ and an integer $1\leq a\leq h-1$.
Consider the curve
$$
Z=Y_j \coprod Y_{\sigma_S(j)}\coprod \ldots  \coprod Y_{\sigma_S^a(j)} \subset Y_S.
$$
Now change the gluing data between $Z$ and $Y_S\smallsetminus Z$
by exchanging the two points of $Z$ that are glued to $Y_S\smallsetminus Z$,
and leaving everything else unchanged.
On $\Gamma_X$ this operation corresponds to
 a twisting at the separating pair of edges of $S$ that join
$\Gamma _Z$ to
$\Gamma _{Y_S\smallsetminus Z}$.
The gluing data are changed according to the rule
$$[(\sigma_S, \psi_S)]\mapsto [(\tau_{j,a} \circ \sigma_S \circ
\tau_{j,a}^{-1}, \psi_S\circ {\rm inv}_{j,a})],$$
where $\tau_{j,a}$ is the element of the symmetric group ${\mathcal S}_h$ defined by
$$\tau_{j,a}:=\prod_{0\leq b\leq \lfloor a-1/2 \rfloor}
(\sigma_S^b(j)\sigma_S^{a-b}(j))
$$
and
${\rm inv}_{j,a}$ is the involution of   $\{s_1,t_1,\ldots,s_h,t_h\}$
that exchanges $s_k$ with $t_k$, for all $k=j, \sigma_S(j),\ldots, \sigma_S^a(j)$,
and fixes all the other elements.
\end{enumerate}

The proof consists in   showing that all the possible gluing data of $S$
 can be obtained
 starting from
$[(\sigma_S,\psi_S)]$ and performing  operations of type  (a)  and (b).

First of all  observe that, by iterating  operations of type (a),
it is possible to arbitrarily  modify
the marking $\psi_S$, while keeping  the cyclic permutation $\sigma_S$ fixed.

On the other hand, using the fact
that any two cyclic permutations of the symmetric group ${\mathcal S}_h$ are conjugate, and that ${\mathcal S}_h$ is generated
by transpositions, it will be enough to show that for any transposition $(jk)\in {\mathcal S}_h$,
by iterating operations of type (b),
we can pass from the gluing data $[(\sigma_S,\psi_S)]$ to   gluing data
of the form $[((jk)\circ \sigma_S\circ (jk)^{-1}, \psi'_S)]$ for some marking
$\psi'_S$.
If the transposition $(jk)$ is such that $k=\sigma_S(j)$ (resp. $k=\sigma_S^2(j)$),
then it is enough to apply
the operation (b) with respect to the component $Y_j$ and the integer $a=1$ (resp. $a=2$).
In the other cases, we can write $k=\sigma_S^a(j)$ with $3\leq a\leq h-1$
and then we apply the operation $(b)$ two times:   first with respect to the component
$Y_{\sigma_S(j)}$ and the integer $a-2$;   secondly with respect to the component $Y_j$
and the integer $a$. After these two operations  the cyclic permutation $\sigma_S$
gets changed to $(jk)\circ \sigma_S \circ (jk)^{-1}$ since
$$(jk)=(j \sigma_S^{a}(j))= \tau_{j,a}\circ \tau_{\sigma_S(j), a-2}.$$
\end{proof}
\section{T-equivalence: a second version of the Torelli theorem}
\label{Tsec}
\begin{nota}
\label{basics}
The statement of Theorem~\ref{main} characterizes curves having isomorphic
ppSSAV in terms of their normalization, and of the C1-partition of their gluing points,
determined by the codimension-one strata of the compactified Picard scheme.

In this section we shall give a different characterization, based on the classifying morphism
of the generalized Jacobian.
From the general theory of semiabelian varieties, recall that the generalized Jacobian
$J(X)$ of a nodal curve $X$ is an extension
$$
1\la H^1(\Gamma_X, k^*)=\G_m^{b_1(\Gamma_X)}\la J(X)\la J(\Xn)=\prod_{i=1}^{\gamma}J(C_i)\la 0
$$
(recall that   
  $\sqcup_{i=1}^{\gamma} C_i=\Xn$ is  the normalization of $X$).
  The above extension is determined by the so-called classifying morphism,
  from the character group of the torus $ H^1(\Gamma_X, k^*)$,
  i.e. from $H_1(\Gamma_X, \Z)$, to the dual abelian variety of $J(\Xn)$.
  Since $J(\Xn)$ is polarized by the Theta divisor, its dual variety can be canonically identified with
  $J(\Xn)$ itself.
  So the classifying morphism in our case takes the form
  $$
  c_X:H_1(\Gamma_X, \Z) \la J(\Xn).
  $$
  This morphism $c_X$ will be explicitly described below.  We shall use      the groups of divisors  and line bundles having degree $0$ on every component:
  $$
 \prod_{i=1}^{\gamma} \Div ^0C_i=\Div ^{\mo}\Xn \la \Pic^{\mo}\Xn = \prod_{i=1}^{\gamma} \Pic ^0C_i=J(\Xn).
$$
\end{nota}

\subsection{Definition of T-equivalence}
\begin{nota}
\label{map-delta}
Fix an orientation of $\Gamma_X$   and consider the source and target maps
$$
s,t:E(\Gamma_X)\to V(\Gamma_X).
$$
Now, $s(e)$ and $t(e)$ correspond naturally to the two points of $\Xn$ lying
over the node corresponding to $e$.
We call $s_e, t_e \in \Xn$ such points.
The usual boundary map is defined as follows
$$
\partial: C_1(\Gamma_X,\Z) \to C_0(\Gamma_X,\Z ); \  \  e \mapsto t(e)-s(e)
$$
and  $H_1(\Gamma_X,\Z)=\ker \partial$.
We now introduce   the map
$$
\w{\eta_X}: C_1(\Gamma_X,\Z)  \to \Div^0 X^{\nu} ; \  \
e \mapsto t_e-s_e.
$$
We will denote by $\eta_X$ the restriction of $\w{\eta_X}$ to $H_1(\Gamma_X, \Z)$,
which is easily seen to take values in the subgroup,
$\Div^{\mo}X^{\nu}$, of divisors having  degree $0$ on every component.

Summarizing, we have a commutative diagram
$$\xymatrix{
 H_1(\Gamma_X,\Z) \ar[r]^{\eta_X}\ar@{^{(}->}[d] & \Div^{{\mo}}X^{\nu}\ar@{^{(}->}[d]\\
C_1(\Gamma_X,\Z) \ar[r]^{\w{\eta_X}} & \Div^{0}X^{\nu}.
}$$

The classifying morphism $c_X: H_1(\Gamma_X,\Z)\to J(X^{\nu})$ of $J(X)$
is obtained by composing the homomorphism
$\eta_X: H_1(\Gamma_X, \Z)\to \Div^{\un{0}}X^{\nu}$
with the  quotient map $\Div^{\un{0}}X^{\nu} \to \Pic^{\un{0}}X^{\nu}=J(X^{\nu})$
sending a divisor to its linear equivalence class.
See \cite[Sec. 2.4]{alex} or \cite[Sec. 1.3]{brion}.

\end{nota}

\begin{nota}
\label{autpic}
Recall the set-up and the notation described in \ref{basics}.
 There are automorphisms of   $\Pic^{\mo}\Xn$ and $\Div ^{\mo}\Xn$ that do not change
 the isomorphism class of $J(X)$.
 We need to take those into account.
In order to do that, consider the group $\Gg:=(\Z/2\Z)^{\gamma}$;  note that it
acts diagonally as subgroup of automorphisms,
 $\Gg\ha \Aut(\Div ^{\mo}\Xn)$,
 $\Gg\ha \Aut(\Div \Xn)$,
  and $\Gg\ha \Aut(\Pic^{\mo}\Xn)$, via multiplication by 
  $+1$ or 
  $-1$ on each factor.
 We shall usually identify $\Gg$ with the image of the above monomorphisms.

For example, if $\Xn =C_1\cup C_2$
  then ${\rm K}_2\subset \Aut(\Div^{\mo}\Xn)$ is generated by the involutions $(D_1,D_2)\mapsto (-D_1,D_2)$
and $(D_1,D_2)\mapsto (D_1,-D_2)$.

\end{nota}

\begin{defi}[T-equivalence]
\label{Tdef}
We say that two nodal connected curves $X$ and $X'$ are {\it T-equivalent} if the following conditions hold.
\begin{enumerate}[(a)]
\item\label{Tdef1}
There exists an isomorphism $\phi: X^{\nu}\stackrel{\cong}{\to} X'^{\nu}$ between their normalizations.
\item\label{Tdef2}
$\Gamma_X\equiv_{\rm cyc}\Gamma_{X'}$.
\item \label{Tdef3} For every   orientation on $\Gamma_X$ there exists an
 orientation on $\Gamma_{X'} $ and an automorphism $\alpha\in \Gg\subset \Aut(\Div^{\mo}\Xn)$
 such that
the following diagram commutes
 $$\xymatrix{
 H_1(\Gamma_X,\Z)
\ar[r]^{\etab}\ar[d]_{\cong}^{\epsilon_H}
& \Divb X^{\nu}\ar[d]^{\phi_D\circ\alpha}_{\cong} \\
  H_1(\Gamma_{X'},\Z) \ar[r]^{\etabb} &
\Divb X'^{\nu}\\
}$$
where
 $\epsilon_H$ is defined in \ref{cyceq} and $\phi_D:\Divb X^{\nu}\to \Divb X'^{\nu}$
 is the isomorphism induced by $\phi$.
\end{enumerate}

We say that two non connected nodal curves $Y$ and $Y'$ are T-equivalent if there exists
a bijection between their connected components  such that every two corresponding components
are T-equivalent.
\end{defi}
We shall prove in \ref{Teq} that two curves free from separating nodes are T-equivalent if and only if they are C1-equivalent, thereby getting a new statement of Theorem~\ref{main}.
We first need some observations.
\begin{remark} \label{Tfin}
Let $X$ and $X'$ be T-equivalent and  free from separating nodes.
 Then  part (\ref{Tdef3}) of the definition implies that
$$
 \phi( \nu^{-1}(\sing))=\nu'^{-1}(X'_{\rm{sing}}),
$$
 where  $\Xn \stackrel{\nu}{\la} X$ and $\Xnn \stackrel{\nu}{\la} X'$ are the normalization maps.
 \end{remark}
\begin{remark}
\label{C1cyc}
Suppose that $\Gamma_X$
and $\Gamma_{X'}$ are cyclically equivalent and fix a cyclic bijection
$\epsilon:E(\Gamma_X)\to E(\Gamma _{X'})$. By  \cite[Cor. 2.3.5]{CV}, $\epsilon$ induces a bijection from the C1-sets of $X$ to
those of $X'$, mapping $S$ to $\epsilon (S)$. For this bijection we shall always use the following notation
$$
\Set X \la \Set X';\  \  \  S\mapsto S'.
$$
 \end{remark}

\begin{lemma}
\label{Tind} Let $X$ and $X'$ be T-equivalent connected curves, free from separating nodes; pick a pair of corresponding C1-sets,
$S\in \Set X$ and $S'\in \Set X'$.
Then the normalization of $X$ at $S$ is T-equivalent to the normalization of $X'$ at $S'$.
\end{lemma}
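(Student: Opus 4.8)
The plan is to reduce the statement to a comparison of the data in Definition~\ref{Tdef} after restricting to the C1-set $S$, using the decomposition of $H_1$ provided by Lemma~\ref{C1dec}. Recall that the normalization $\nu_S:Y_S\to X$ at $S$ has $\Xn=(Y_S)^{\nu}$ as its total normalization (normalizing at $S$ and then fully normalizing $Y_S$ gives back $\Xn$), so condition~(\ref{Tdef1}) for $Y_S$ and $Y'_{S'}$ is immediate from the isomorphism $\phi:\Xn\stackrel{\cong}{\to}\Xnn$ of~(\ref{Tdef1}) for $X$ and $X'$. Likewise the dual graph of $Y_S$ is $\Gamma_X\smallsetminus S$ (see~\ref{GammaXS}), and $H_1(\Gamma_X\smallsetminus S,\Z)$ sits inside $H_1(\Gamma_X,\Z)$ via the top exact row of diagram~(\ref{2graphs}). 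The whole point is therefore to check that the cyclic bijection and the commuting square of~(\ref{Tdef3}) for $X,X'$ restrict correctly to $Y_S,Y'_{S'}$.

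First I would produce the cyclic equivalence $\Gamma_X\smallsetminus S\equiv_{\rm cyc}\Gamma_{X'}\smallsetminus S'$ of condition~(\ref{Tdef2}). Fix the cyclic bijection $\epsilon:E(\Gamma_X)\to E(\Gamma_{X'})$ coming from~(\ref{Tdef2}); by Remark~\ref{C1cyc} it sends the C1-set $S$ to $S'$, hence restricts to a bijection $E(\Gamma_X\smallsetminus S)=E(\Gamma_X)\smallsetminus S\to E(\Gamma_{X'})\smallsetminus S'=E(\Gamma_{X'}\smallsetminus S')$. To see this restriction is again a cyclic bijection I would use the characterization of cycles via the C1-set decomposition: by Fact~\ref{C1lm}, the edges of $S$ are exactly those lying on the same cycles as one another, so every cycle of $\Gamma_X$ either is supported entirely on edges of $S$ (these are the cycles of $\Gamma_X(S)$, contributing to $H_1(\Gamma(S),\Z)$ in Lemma~\ref{C1dec}) or avoids $S$ altogether and is thus a cycle of $\Gamma_X\smallsetminus S$. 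Since $\epsilon$ preserves cycles and sends $S$ to $S'$, it must carry cycles avoiding $S$ to cycles avoiding $S'$, giving the desired cyclic bijection $\epsilon|:\Gamma_X\smallsetminus S\to\Gamma_{X'}\smallsetminus S'$ and an induced isomorphism $\epsilon_H^{(S)}$ on $H_1$.

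Next I would establish condition~(\ref{Tdef3}) for the pair $Y_S,Y'_{S'}$. The key observation is that the maps $\eta_X$ and $\eta_{Y_S}$ are compatible: by construction in~\ref{map-delta}, $\eta_X(c)=\sum(t_e-s_e)$ depends only on the oriented edges appearing in the cycle $c$, and for $c\in H_1(\Gamma_X\smallsetminus S,\Z)\subset H_1(\Gamma_X,\Z)$ none of those edges lies in $S$; hence under the identification of source/target points on $\Xn$ the value $\eta_X(c)$ coincides with $\eta_{Y_S}(c)$ computed on $\Gamma_{Y_S}=\Gamma_X\smallsetminus S$, both landing in the same $\Divb\Xn$. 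Given an orientation of $\Gamma_{Y_S}$, I would extend it to an orientation of $\Gamma_X$ (choosing any orientation on the edges of $S$), apply~(\ref{Tdef3}) for $X,X'$ to obtain an orientation of $\Gamma_{X'}$ and an $\alpha\in\Gg$ making the big square commute, and then restrict: the induced orientation of $\Gamma_{X'}\smallsetminus S'$, the same $\alpha$ (which already lives in $\Gg=(\Z/2\Z)^{\gamma}$ acting on $\Divb\Xn=\Divb(Y_S)^{\nu}$), and the restricted $\epsilon_H^{(S)}$ make the corresponding smaller square commute, because every arrow in the $Y_S$-square is the restriction of the matching arrow in the $X$-square to the subgroup $H_1(\Gamma_X\smallsetminus S,\Z)$ and its image.

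The main obstacle I anticipate is the orientation bookkeeping in~(\ref{Tdef3}): the definition of T-equivalence quantifies over \emph{all} orientations of $\Gamma_X$ and asks for the existence of a compatible orientation of $\Gamma_{X'}$, so to verify it for $Y_S$ I must make sure that an arbitrary orientation of $\Gamma_{Y_S}=\Gamma_X\smallsetminus S$ really does extend to $\Gamma_X$ and that the orientation of $\Gamma_{X'}$ produced by the hypothesis restricts to a \emph{well-defined} orientation of $\Gamma_{X'}\smallsetminus S'$ compatible with $\epsilon|$. Extending the orientation across $S$ is harmless since the added oriented edges of $S$ do not enter any cycle in $H_1(\Gamma_X\smallsetminus S,\Z)$, so the restricted square is insensitive to that choice; this is exactly where Lemma~\ref{C1dec} pays off, guaranteeing that $H_1(\Gamma_X\smallsetminus S,\Z)$ is a direct summand complementary to the $S$-cycles and that the restriction of $\eta_X$ is intrinsic. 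Once this compatibility is pinned down, all three conditions~(\ref{Tdef1})--(\ref{Tdef3}) for $Y_S$ and $Y'_{S'}$ follow, completing the proof.
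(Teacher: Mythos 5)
Your proposal follows the paper's own proof step for step: condition~(\ref{Tdef1}) is immediate because $(Y_S)^{\nu}=\Xn$; condition~(\ref{Tdef2}) is obtained by restricting the cyclic bijection $\epsilon$, which maps $S$ to $S'$ by Remark~\ref{C1cyc}; and condition~(\ref{Tdef3}) is obtained by extending an orientation of $\Gamma_X\smallsetminus S$ to $\Gamma_X$, invoking T-equivalence of $X$ and $X'$, and restricting the resulting commutative diagram. This is exactly the paper's argument, and the operative steps are correct.

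One auxiliary claim, however, is false as stated: it is not true that every cycle of $\Gamma_X$ is either supported entirely on edges of $S$ or disjoint from $S$. For a counterexample, let $\Gamma_X$ have vertices $v_1,v_2,v_3$ and edges $e_1$ (joining $v_1,v_2$), $e_2$ (joining $v_2,v_3$), and two parallel edges $e_3,e_4$ (each joining $v_3,v_1$); this graph is free from separating edges, $S=\{e_1,e_2\}$ is a C1-set, and the cycle $\{e_1,e_2,e_3\}$ meets $S$ without being supported on it. What Fact~\ref{C1lm} actually gives is weaker: a cycle containing one edge of $S$ contains all of $S$, but possibly other edges as well; equivalently, the cycles of $\Gamma_X\smallsetminus S$ are precisely the cycles of $\Gamma_X$ not containing $S$, which is the statement the paper uses. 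Relatedly, the summands $H_1(\Gamma(S),\Z)$ in Lemma~\ref{C1dec} are images of $H_1(\Gamma,\Z)$ under the contraction maps $\sigma_{S*}$, not subgroups of cycles supported on $S$ (in the example above there is no nonzero cycle supported on $S$ at all). Fortunately neither misstatement is load-bearing: your operative sentence --- $\epsilon$ preserves cycles and maps $S$ bijectively onto $S'$, hence carries cycles avoiding $S$ bijectively onto cycles avoiding $S'$ --- stands on its own, so the proof goes through once the false dichotomy is deleted.
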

\begin{proof}
Let $Y$ be the normalization of $X$ at $S$ and $Y'$  the normalization of $X'$ at $S'$.
It is obvious that $Y$ and $Y'$  have isomorphic normalizations.
Observe that  $\Gamma_Y=\Gamma_X \smallsetminus S$ and $\Gamma_{Y'}=\Gamma_{X '}\smallsetminus S'$.
The bijection $\epsilon:E(\Gamma_X)\to E(\Gamma_{X'})$
maps the edges of $S$ to the edges of $S'$; hence it
induces a bijection
  $\epsilon_Y:E(\Gamma_Y)\to E(\Gamma_{Y'})$.
To see that $\epsilon_Y$ induces a bijection on the cycles it suffices to
observe that the cycles of $\Gamma_Y=\Gamma_X \smallsetminus S$
are precisely the cycles of $\Gamma_X$ which do not contain $S$ (by Fact~\ref{C1lm}),
and the same holds for $Y'$.
Therefore  $\Gamma_Y$ and $\Gamma_{Y'}$ are  cyclically equivalent.

Finally,  let us pick an orientation  on $\Gamma_Y$ and  extend it to  an orientation on $\Gamma _X$.
The map ${\eta_Y}$ naturally factors
$$
{\eta_Y}:H_1(\Gamma_X\smallsetminus S, \Z)\ha H_1(\Gamma_X, \Z) \stackrel{\etab}{\la}
\Divb \Xn.
$$
Choose  an orientation on $\Gamma_{X'}$ so that condition (\ref{Tdef3}) holds.
Then  we have a commutative diagram
$$\xymatrix{
{\eta_Y}: H_1(\Gamma_Y,\Z) \ar[d]_{\cong}^{}  \ar@{^{(}->}[r] &H_1(\Gamma_X,\Z)
\ar[r]^{\etab}\ar[d]_{\cong}^{\epsilon_H}
& \Divb X^{\nu}\ar[d]^{\alpha}_{\cong} \\
{\eta_{Y'}}: H_1(\Gamma_{Y'},\Z) \ar@{^{(}->}[r]&  H_1(\Gamma_{X'},\Z) \ar[r]^{{\eta_{X'}}} &
\Divb \Xnn. \\
}$$
This proves that condition (\ref{Tdef3}) holds for $Y$ and $Y'$, so  we are done.
\end{proof}
\subsection{C1-equivalence equals T-equivalence}
 \begin{prop}
\label{Teq}
Let $X$ and $X'$ be   connected curves free from separating nodes.
Then $X$ and $X'$ are T-equivalent if and only if they are C1-equivalent.
\end{prop}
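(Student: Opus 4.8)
The plan is to prove both implications by reducing the global statement to a statement about a single C1-set, and then analyzing that case directly in terms of the homology factorization of Lemma~\ref{C1dec}. For the forward direction (T-equivalence implies C1-equivalence), note that T-equivalence already supplies an isomorphism $\phi:\Xn\to\Xnn$ between the normalizations, which is condition (\ref{C1n}) of C1-equivalence. Condition (\ref{Tdef2}), namely $\Gamma_X\equiv_{\rm cyc}\Gamma_{X'}$, provides via Remark~\ref{C1cyc} a bijection $\Set X\to\Set X'$, $S\mapsto S'$, induced by the cyclic bijection $\epsilon$. It then remains to verify condition (\ref{C1S}), i.e.\ that $\phi(\nu^{-1}(S))=\nu'^{-1}(S')$ for each corresponding pair. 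The key observation is that $\nu^{-1}(S)=G_S$ is exactly the support of the image $\etab(H_1(\Gamma_X(S),\Z))$ inside $\Divb\Xn$, because each generator of the cycle $H_1(\Gamma_X(S),\Z)$ maps under $\w{\eta_X}$ to a sum of differences $t_e-s_e$ over the edges $e\in S$, and these are precisely the gluing points over the nodes in $S$. Therefore I would use the commuting diagram of condition (\ref{Tdef3}), together with the factorization of $H_1(\Gamma_X,\Z)$ through $\bigoplus_S H_1(\Gamma_X(S),\Z)$ from Lemma~\ref{C1dec}, to deduce that $\phi_D\circ\alpha$ carries $\etab(H_1(\Gamma_X(S),\Z))$ onto $\etabb(H_1(\Gamma_{X'}(S'),\Z))$; reading off supports and using that $\alpha\in\Gg$ only flips signs (hence preserves supports) gives $\phi(G_S)=G_{S'}'$, which is (\ref{C1S}).

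For the converse (C1-equivalence implies T-equivalence), I would start from the isomorphism $\phi$ of normalizations and the support-preserving bijection $\Set X\to\Set X'$ given by Definition~\ref{C1}. Since by Proposition~\ref{equivgr} C1-equivalent curves have strongly cyclically equivalent, hence cyclically equivalent, dual graphs, conditions (\ref{Tdef1}) and (\ref{Tdef2}) of T-equivalence are immediate, and the cyclic bijection $\epsilon$ can be chosen compatibly with the given matching of C1-sets. The real content is condition (\ref{Tdef3}): given an orientation on $\Gamma_X$, I must produce an orientation on $\Gamma_{X'}$ and a sign automorphism $\alpha\in\Gg$ making the square commute. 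Here I would again invoke Lemma~\ref{C1dec} to reduce to checking commutativity on each summand $H_1(\Gamma_X(S),\Z)$ separately, i.e.\ on each cycle. On a single cycle $\Gamma_X(S)$, the map $\etab$ is determined by the involution $\iota_S$ and the gluing data $[(\sigma_S,\psi_S)]$; since C1-equivalence forces $\phi$ to match the points $p_i,q_i$ of $G_S$ with those of $G_{S'}'$ and to respect the involutions (by Lemma~\ref{detinv}), the two maps $\etab$ and $\etabb$ agree on the cycle up to a choice of orientation and up to the signs recording whether $\phi$ sends each $p_i$ to $p_i'$ or to $q_i'$; the orientation of $\Gamma_{X'}(S')$ absorbs the cyclic ambiguity and $\alpha$ absorbs the per-component sign ambiguity.

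I expect the main obstacle to be the bookkeeping in condition (\ref{Tdef3}) of the converse: one must simultaneously choose a single global orientation on $\Gamma_{X'}$ and a single global $\alpha\in\Gg$ that work on all C1-sets at once, whereas the per-cycle analysis only gives local choices on each $H_1(\Gamma_X(S),\Z)$. The delicate point is that the summands in Lemma~\ref{C1dec} may involve overlapping sets of irreducible components of $\Xn$ (distinct C1-sets can meet a common component via their gluing points), so the sign flips $\alpha$ prescribed by different cycles must be shown to be consistent on shared components. I would handle this by observing that $\alpha$ acts by a sign per \emph{component} $C_i$ of $\Xn$, and that the constraint imposed by each cycle $S$ only pins down the \emph{product} of signs around the cycle, not the individual signs; exploiting this slack, one can solve the resulting system of sign equations globally, after which choosing the orientation on $\Gamma_{X'}$ cycle by cycle (consistently, since the cycles are edge-disjoint by Fact~\ref{C1lm}) completes the construction. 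Lemma~\ref{Tind} then guarantees the reduction to single C1-sets is legitimate, closing the argument.
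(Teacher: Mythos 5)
Both directions of your plan break down at their decisive step. In the forward direction, the claim that the square in condition (\ref{Tdef3}) together with Lemma~\ref{C1dec} lets you ``deduce that $\phi_D\circ\alpha$ carries $\w{\eta_X}\bigl(H_1(\Gamma_X(S),\Z)\bigr)$ onto $\w{\eta_{X'}}\bigl(H_1(\Gamma_{X'}(S'),\Z)\bigr)$'' is a genuine gap: that square constrains $\phi_D\circ\alpha$ only on $\etab\bigl(H_1(\Gamma_X,\Z)\bigr)$, and $H_1(\Gamma_X(S),\Z)$ is \emph{not} contained in $H_1(\Gamma_X,\Z)$. In Lemma~\ref{C1dec} the inclusion $H_1(\Gamma_X,\Z)\hookrightarrow\bigoplus_S H_1(\Gamma_X(S),\Z)$ is proper in general: the generator $c_S$ of a single summand lifts to a $1$-chain of $\Gamma_X$ with nonzero boundary, because the edges of $S$ join distinct connected components of $Y_S$. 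So every honest cycle of $\Gamma_X$ meeting $S$ also meets other C1-sets, and reading off supports only gives $\phi(G_S\cup G_{T_1}\cup\cdots)=G'_{S'}\cup G'_{T_1'}\cup\cdots$; isolating $G_S$ needs a further argument. The paper supplies it by applying Remark~\ref{Tfin} not just to $X,X'$ but to the partial normalizations $Y_S$ and $Y'_{S'}$ --- which are T-equivalent by Lemma~\ref{Tind} --- and then taking the set difference $G_S=G_{\sing}\smallsetminus G_{Y_{\rm sing}}$. (Your route can be repaired without that: for each $T\neq S$ there is a cycle of $\Gamma_X$ through an edge of $S$ avoiding $T$, since $S$ lies in a single connected component of $Y_T$ by Lemma~\ref{ST} and that component has no separating nodes; as the sets $G_U$ are pairwise disjoint, intersecting the supports of the images of these cycles isolates $\phi(G_S)=G'_{S'}$. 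But some such step must be written down.)

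In the converse direction you correctly locate the crux --- one global $\alpha\in\Gg$ and one orientation of $\Gamma_{X'}$ must serve all C1-sets simultaneously --- but your resolution misstates the constraint and then assumes the conclusion. Commutativity on a single cycle $S$ does not pin down merely ``the product of signs around the cycle'': writing $\eta(S)(c_S)=\sum_i(q_i-p_i)$ and $\eta(S')(c_{S'})=\sum_{i\in F}(q_i-p_i)+\sum_{i\in G}(p_i-q_i)$, it forces the signs of \emph{all} irreducible components of $\Xn$ meeting $G_S$ to be determined up to one overall sign (the components carrying $p_i$ and $q_i$ must receive equal signs, with the pattern dictated by $F$ and $G$). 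The compatibility of these requirements across C1-sets sharing components is exactly what has to be proved, and ``one can solve the resulting system of sign equations globally'' is an assertion, not a proof. The paper proves it by a concrete device: each local solution $\alpha(S)$ is chosen in $\Gg(S)$, i.e.\ constant on $\Div Y_i^{\nu}$ for every connected component $Y_i$ of $Y_S$; one sets $\alpha=\prod_S\alpha(S)$ and uses Lemma~\ref{ST} to see that for $T\neq S$ the factor $\alpha(T)$ acts on divisors supported on $G_S$ by a single overall sign (because $G_S$ lies in one connected component of $Y_T$); hence $\alpha$ agrees with $\alpha(S)$ on $G_S$ up to sign, and that sign is absorbed by replacing $\epsilon(S)$ with $-\epsilon(S)$, i.e.\ reversing the orientation of the cycle $\Gamma_{X'}(S')$. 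Without this (or an equivalent) mechanism the converse is not established. Finally, your closing appeal to Lemma~\ref{Tind} is circular here: that lemma presupposes T-equivalence of $X$ and $X'$, which is what you are trying to prove; the legitimate reduction tool in this direction is Lemma~\ref{C1dec} alone.
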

\begin{proof}
Suppose that $X$ and $X'$ are T-equivalent. Then property (\ref{C1n}) of Definition~\ref{C1}  obviously holds.
Let us simplify the notation by identifying $X^{\nu}= X'^{\nu}$.
Since the dual graphs of $X$ and $X'$ are cyclically equivalent, we have a cardinality preserving bijection between the C1-sets of $X$ and $X'$, by Remark~\ref{C1cyc}.
To prove part  (\ref{C1S}) of Definition~\ref{C1}  let $S, S'$ be any pair  as in \ref{C1cyc},
and  denote, as usual,
$$
G_S:=\nu^{-1}(S)\subset \Xn \  \  \  \text{ and }\  \  \  G_{S'}:=\nu'^{-1}(S)\subset \Xn.
$$
We must prove that $G_S=G_{S'}$.
Since $X$ and $X'$ are T-equivalent, by  Remark~\ref{Tfin} the gluing sets  are the same:
\begin{equation}
\label{Gsing}
G_{\sing}=G_{X'_{\rm sing}}.
\end{equation}
Let $Y$ be the normalization of $X$ at $S$ and $Y'$ the normalization of $X'$ at $S'$.
By Lemma~\ref{Tind} $Y$ and $Y'$ are T-equivalent.
Now, the normalization of $Y$ and $Y'$ is $\Xn$, and by Remark~\ref{Tfin} applied to $Y$ and $Y'$ we obtain
\begin{equation}
\label{GYsing}
G_{Y_{\rm sing}}=G_{Y'_{\rm sing}}\subset \Xn.
\end{equation}
Now, it is clear that $G_S= G_{\sing}\smallsetminus G_{Y_{\rm sing}}$ and
$G_{S'}=G_{X'_{\rm sing}}\smallsetminus G_{Y'_{\rm sing}}$.
Therefore by (\ref{Gsing}) and (\ref{GYsing}) we get $G_S=G_{S'}$ as wanted.

Conversely,  assume that $X$ and $X'$ are C1-equivalent.
By \ref{equivgr} their graphs are cyclically equivalent.
Let us identify $X^{\nu}= X'^{\nu}$, so that by hypothesis $G_S=G_{S'}$
for every pair of  corresponding C1-sets.
It remains to prove that property (\ref{Tdef3}) of Definition~\ref{Tdef} holds.

We begin with a preliminary definition.
From    \ref{autpic}, recall that the group $\Gg=(\Z/2\Z)^{\gamma}$
acts as subgroup of automorphisms
of $\Div \Xn=\prod_{i=1}^{\gamma} \Div C_i$, by the natural diagonal action
 defined in \ref{autpic} (so that any $\alpha\in \Gg$ acts on each $\Div C_i$
either as the identity or as multiplication by $-1$).
For every $S\in \Set X$ denote as usual $Y_1,\ldots, Y_h$ the connected components
of $Y_S$ and let  $Y_i^{\nu}$ be the normalization of  $Y_i$. We   have
$ \Aut (\Div \Xn)=\prod_{i=1}^h\Aut (\Div Y_i^{\nu})$;  we define a subgroup of $\Gg$
$$
\Gg(S):=\{\alpha \in\Gg \subset \Aut (\Div \Xn):\   \alpha _{|\Div Y_i^{\nu}}=\pm 1 \} .
$$
Let $S$ and $S'$ be  corresponding C1-sets, as above. Let $\Gamma =\Gamma_X$ and $\Gamma '= \Gamma _{X'}$.
The graphs $\Gamma(S)$ and $\Gamma'(S')$ are cycles of length $h=\#S=\#S'$,
whose sets of edges are naturally identified with $S$ and $S'$ respectively.
Hence there is a natural inclusion
$C_1(\Gamma(S),\Z)\subset C_1(\Gamma,\Z)$; ditto for $S'$.
Set (notation in \ref{map-delta})
$$
\w{\eta (S)}:= \w{\eta_X}_{|C_1(\Gamma(S))}:C_1(\Gamma(S))\la \Div^0 \Xn\subset \Div \Xn;\  \  \  e\mapsto t_e-s_e
$$
(where above and throughout the rest of the proof we  omit $\Z$). For any orientation
on $\Gamma(S)$ we let
 $\eta(S)$ be  the restriction of $\w{\eta (S)}$ to $H_1(\Gamma(S))$

\begin{equation}
\label{etacom}
\w{\eta_X}_{|H_1(\Gamma(S))}=\eta(S):H_1(\Gamma(S)) \la \Div  \Xn.
\end{equation}
We define $\eta(S'):H_1(\Gamma'(S')) \to \Div  \Xn$ analogously.
Let us describe $\eta(S)$ and $\eta(S')$.
As $\Gamma(S)$ is a cycle
for any choice of orientation we have a choice of two generators of  $H_1(\Gamma(S))\cong \Z$.
 We pick one of them and call it $c_S$.
Write $G_S=\{p_1,q_1;\ldots ;p_h,q_h\}$ as in (\ref{gldata}).
Up to reordering the components $Y_1,\ldots Y_h$ and switching $p_i$ with $q_i$ we may assume that
\begin{equation}
\label{etaS}
\eta(S)(c_S)=\sum_{i=1}^h(q_i-p_i).
\end{equation}
Notice that the choice of orientation is essentially irrelevant: for any orientation and any generator
$\widetilde{c_S}$ of $H_1(\Gamma(S))$ we have that $\eta(S)(\widetilde{c_S})=\pm\sum_{i=1}^h(q_i-p_i)$.

Similarly, make a choice of orientation for $\Gamma '(S')$ and pick a generator
$c_{S'}$ of $H_1(\Gamma'(S'))$.
Then one easily checks that there exists a partition  $\{1,\ldots, h\}=F\cup G$ in two disjoint sets,  $F$ and $G$,
such that we have

\begin{equation}
\label{etaSS}
\eta(S')(c_{S'})=\sum_{i\in F}(q_i-p_i)+\sum_{i\in G}(p_i-q_i).
\end{equation}

Let $\alpha(S)\in \Gg(S)\subset\Aut(\Div \Xn)$ be the automorphism whose restriction to
 $\Div Y_i^{\nu}$ is the identity
for $i\in F$, and it is multiplication by $-1$ for  $i\in G$.
Now let
$$
\epsilon (S):H_1(\Gamma(S))\stackrel{\cong}{\la}H_1(\Gamma '(S'))
$$ be the isomorphism  mapping $c_S$ to $c_{S'}$.
By construction
   $
\eta(S)=\alpha(S)\circ \eta(S')\circ\epsilon(S),
$
i.e.
the map $ {\eta(S)}$ factors as follows
\begin{equation}
\label{fact}
{\eta(S)}: H_1(\Gamma(S))\stackrel{\epsilon (S)}{\la} H_1(\Gamma(S')) \stackrel{ {\eta(S')}}{\la}   {\Div \Xn}\stackrel{ \alpha(S)}{\la}   {\Div \Xn}.
\end{equation}

We repeat the above  construction for every pair of corresponding C1-sets $(S, S')$.

Using Lemma~\ref{C1dec} and   (\ref{etacom}) we have
$$
\eta_X =\bigr(\oplus_{S\in \Set X}{\eta(S)}\bigl)_{|H_1(\Gamma)}\  \  \  \text{and }\  \  \ \eta_{X'} = \bigr(\oplus_{S'\in \Set X'}  {\eta(S')}\bigl)_{|H_1(\Gamma')}.
$$
Now let
$$
\alpha:=\prod_{S\in \Set X}\alpha(S)\in \Gg
$$
where the product above means  composition of the $\alpha (S)$ is any chosen order.
We claim that for every fixed  $S\in \Set X$ we have
$$
\alpha \circ \eta (S')=\pm \alpha(S)\circ \eta(S').
$$
Indeed,  by \ref{ST}, for  any  $T\in \Set X$,  with  $T\neq S$,
$S$ is entirely contained in the singular locus of a unique connected component
of $Y_T$, call it $Y_{T,1}$.
Therefore the gluing set $G_{S'}=G_S$ is entirely contained in $Y_{T,1}^{\nu}$.
By definition, $\alpha(T)$ acts either as the identity or  as multiplication by $-1$  on
every divisor of $\Xn$ supported on  $Y_{T,1}^{\nu}$; in particular
$\alpha(T)$ acts by multiplication by $\pm 1$ on $\eta(S')(c_{S'})$.
The claim is proved.

As a consequence of this claim and of \ref{fact}
we have
$$
\alpha\circ \eta (S')\circ\epsilon (S)=\pm \eta (S).
$$
Now, if for a certain $S$  the above identity holds with a minus sign on
the right, we change  $\epsilon (S)$  into $-\epsilon (S)$,
but we continue to denote it $\epsilon (S)$ for simplicity.

Using again Lemma~\ref{C1dec} we let
$\epsilon_X:H_1(\Gamma)\stackrel{\cong}{\to} H_1(\Gamma ')$ be the restriction to $H_1(\Gamma)$  of
the isomorphism
$$
\oplus_{S\in \Set X}:\oplus_{S\in \Set \Gamma}H_1(\Gamma(S))\stackrel{\cong}{\to}
\oplus_{S'\in \Set \Gamma'}H_1(\Gamma'(S')).
$$
It is trivial to check that $\epsilon_X$ is an isomorphism. In fact by the proof of Proposition~\ref{equivgr} it is clear that $\epsilon_X$ induces the given bijection
between the C1-sets of $X$ and $X'$.
Combining and concluding, we have a
 a commutative diagram
\begin{equation} \xymatrix{
\eta_X:H_1(\Gamma) \ar@{^{(}->}[r]^{}\ar[d]_{\epsilon_X}^{\cong} & \oplus_{S\in \Set \Gamma}H_1(\Gamma(S))
 \ar[r]^{\  \  \  \oplus\eta(S)}& \Div X^{\nu}\ar[d]_{\alpha}^{\cong} \\
\eta_{X'}:H_1(\Gamma')  \ar@{^{(}->}[r]^{} &  \oplus_{S'\in \Set \Gamma'}H_1(\Gamma'(S'))
 \ar[r]^{\  \  \  \  \  \oplus\eta(S')}& \Div X^{\nu}\\
}
\end{equation}
so we are done.
\end{proof}
 
\section{Proof of the Main Theorem}
 \label{proofsec}
The hard part of the proof of Theorem~\ref{main} is the necessary condition:
  if  two stable curves with no separating nodes 
  have the same image
under the Torelli map, then  
they are C1-equivalent.
The proof  is given in Subsection~\ref{tornec} using the preliminary material of Subsections~\ref{posets} and \ref{rectheta}. 
The proof of the converse   occupies Subsection~\ref{pfsuf}.

\subsection{Combinatorial preliminaries}
\label{posets}
In this subsection we  fix a connected curve $X$ free from separatig nodes, and 
 study the precise relation between the posets $\ST_X$ and $\SP_X$, defined in Subsection~\ref{poset}.
 
We will prove, in Lemma~\ref{equal-posets}, that the support map $\Supp_X:\ST_X\to \SP_X$ is a quotient of posets,
that is,   given $S,T\in \SP_X$ we have $S\geq T$ if and only if  there exists $P_S^{\md}$ and $P_T^{\me}$ in $\ST_X$ such that $P_S^{\md}\geq P_T^{\me}$.
In particular, the poset $\SP_X$ is completely determined by $\ST_X$.
This fact will play a crucial role later on, to recover the combinatorics of $X$ from that of $\PXgb$.

We shall here
  apply some combinatorial results obtained  in \cite{CV}, to which we refer for further details.
First of all, observe that   the  poset $\SP_X$  can be defined   purely in terms of the dual graph
of $X$. Namely $\SP_X$ is equal to the poset $\SP_{\Gamma _X},$ defined in  \cite[Def. 5.1.1]{CV}
as the poset of all $S\subset E(\Gamma _X)$ such that $\Gamma_X\smallsetminus S$ is free
from separating edges, ordered by reverse inclusion.

Next, we need to unravel the combinatorial nature of   $\ST_X$;
recall that its elements  correspond to pairs, $(S,\md )$ where $S\in \SP_X$ and $\md$ is a stable multidegree
on the curve $Y_S$. Now, it turns out that stable multidegrees can be defined in terms of so-called  totally cyclic orientations 
  on the graph  $\Gamma_X$.
To make this precise we introduce a new poset,  $\OP_{\Gamma}$ (cf. \cite[Subsec. 5.2]{CV}).
\begin{defi}
\label{tot}
If $\Gamma$ is a connected graph,
  an orientation of $\Gamma$ is {\it totally cyclic} if   there exists
no proper non-empty subset $W\subset V(\Gamma)$ such that the edges between $W$ and
its complement  $V(\Gamma)\smallsetminus W$ go all in the same direction.

 If $\Gamma$ is not connected,  an orientation is totally cyclic if  its restriction to each connected component
of $\Gamma$ is totally cyclic.

The  poset  $\OP_{\Gamma}$ is defined as the set
$$
\OP_{\Gamma}=\{\phi _S:  \phi _S \text{ is  a totally cyclic orientation on }\  \Gamma\smallsetminus S,\  \forall S\in \SP_{\Gamma}\}
$$
together with the following partial order:
$$\phi_S\geq \phi_T \Leftrightarrow   S\subset  T
\text{ and } \phi_T=(\phi_S)_{|\Gamma\smallsetminus  T}.
$$
\end{defi}
\begin{remark}
It is easy to check that if $\Gamma$ admits some separating edge, then $\Gamma$ admits no totally cyclic orientation.
The converse also holds (see loc. cit).
\end{remark}
\begin{nota}
\label{ordeg}{\it Relation between $\OP_{\Gamma_X}$ and  $\ST_X$.}
How is the poset of 
 totally cyclic orientations related to 
the poset $\ST_X$? This amounts to ask about the connection between totally cyclic orientations and
  stable multidegrees, which  is well known
to be the following.

Pick $Y_S$ and any totally cyclic orientation
$\phi_S$
on $\Gamma _{Y_S}=\Gamma\smallsetminus S$;
for every vertex $v_i$ call   $d^+(\phi_S)_{v_i}$
the number of edges of $\Gamma\smallsetminus S$ that  start
 from   $v_i$ according to $\phi_S$. Now
we define a multidegree $\md (\phi _S)$ on $Y_S$ as follows
\begin{equation}
\label{degor}
\md (\phi _S)_{v_i}:=g_i-1+d^+(\phi_S)_{v_i},\  \  \   i=1,\ldots, \gamma,
\end{equation}
 where $g_i$ is the geometric genus of the component corresponding to $v_i$.
Now:
  
{\it A multidegree $\md$  is stable on $Y_S$ if and only if there exists a totally cyclic orientation
$\phi_S$ such that $\md = \md (\phi _S)$  }  (see  \cite[Lemma 2.1]{beau} and \cite[sec.1.3.2]{ctheta}).

Obviously, two totally cyclic  orientations
define the same multidegree   if and only if the number of edges departing from every vertex is the same.
We shall regard two such   orientations as equivalent:
 \begin{defi}
\label{equiv-or}(The poset ${\ov{\OP_{\Gamma}}}$.)
Two orientations $\phi_S$ and $\phi_{T}$ of $\OP_{\Gamma}$
are equivalent
if $S=T$ and if
$d^+(\phi_S)_v=d^+(\phi_{T})_v$ for every vertex $v$ of $\Gamma$.
The  set of equivalence classes of orientations will be denoted by
${\ov{\OP_{\Gamma}}}$.
The quotient map
$\OP_{\Gamma}  \to \ov{\OP_{\Gamma}}$ induces a unique poset structure on
 ${\ov{\OP_{\Gamma}}}$, such  that two classes $[\phi_S],  [\phi_T]\in {\ov{\OP_{\Gamma}}}$
 satisfy
$[\phi_S] \geq [\phi_T]$ if there exist respective representatives
$\phi_{S}$ and $\phi_{T}$ such that
$ \phi_{S}\geq \phi_{T}$ in $\OP_{\Gamma}$.

\end{defi}
The above definition coincides with \cite[Def. 5.2.3]{CV}.

\end{nota}
We shall soon prove that there is a natural isomorphism of posets between ${\ov{\OP_{\Gamma_X}}}$ and $\ST_{\Gamma_X}$.
Before   doing that, we recall the  key result   about the relation between ${\ov{\OP_{\Gamma_X}}}$ and $\SP_{\Gamma_X}$. 
\begin{fact}
\label{quot} Let $\Gamma$ be a connected graph free from separating edges; consider the
   natural maps
$$\begin{aligned}
 \cosupp_{\Gamma}:\OP_{\Gamma} & \la &\ov{\OP_{\Gamma}} &\stackrel{\ov{\cosupp_{\Gamma}}}{\la} & \SP_{\Gamma}\\
\phi_S&  \mapsto &[\phi_S]&\mapsto &S.\  \  \
\end{aligned}
$$
 \begin{enumerate}\item
The maps $ \cosupp_{\Gamma}$ and $\ov{\cosupp_{\Gamma}}$ are quotients of posets.
 \item
 \label{quot2}
 The poset $\SP_{\Gamma}$ is completely determined, up to isomorphism, by   the poset
 $\ov{\OP_{\Gamma}}$ (and conversely). 
\end{enumerate}
\end{fact}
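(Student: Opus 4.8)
The plan is to prove part (1) --- which carries all the content --- and then deduce part (2) formally. Both maps are visibly well defined, surjective and order--preserving: surjectivity of $\ov{\cosupp_{\Gamma}}$ holds because for every $S\in \SP_{\Gamma}$ the graph $\Gamma\smallsetminus S$ is free from separating edges and hence carries a totally cyclic orientation (recall that a graph admits a totally cyclic orientation if and only if it has no separating edge, see \cite{CV}), and order--preservation is immediate from the three definitions of partial order. So in each case only the \emph{quotient} condition has to be checked, i.e. that the order on the target is the one induced by the map. For $\cosupp_{\Gamma}:\OP_{\Gamma}\to \ov{\OP_{\Gamma}}$ there is nothing to prove: by Definition~\ref{equiv-or} the partial order on $\ov{\OP_{\Gamma}}$ is, by construction, the relation ``$[\phi_S]\geq[\phi_T]$ iff there are representatives with $\phi_S\geq \phi_T$'', which is exactly the quotient condition (that this relation is genuinely a partial order is built into \ref{equiv-or}).

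The substance is therefore the quotient condition for $\ov{\cosupp_{\Gamma}}$. Order--preservation gives one implication, so I must show: \emph{if $S,T\in\SP_{\Gamma}$ satisfy $S\subseteq T$ (i.e. $S\geq T$ in $\SP_{\Gamma}$), then there exist totally cyclic orientations $\phi_S$ of $\Gamma\smallsetminus S$ and $\phi_T$ of $\Gamma\smallsetminus T$ with $\phi_T=(\phi_S)_{|\Gamma\smallsetminus T}$.} Since $S\subseteq T$, the graph $\Gamma\smallsetminus T$ is obtained from $\Gamma\smallsetminus S$ by deleting the edge set $U:=T\smallsetminus S$, so this is equivalent to the following \textbf{extension lemma}, which I expect to be the main obstacle: \emph{every totally cyclic orientation $\phi_T$ of $\Gamma\smallsetminus T$ extends, by a suitable orientation of the edges of $U$, to a totally cyclic orientation of $\Gamma\smallsetminus S$.} The delicate point is that the naive restriction of a totally cyclic orientation of the larger graph need not be totally cyclic, so one really has to build $\phi_S$ from $\phi_T$ upward rather than the reverse.

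To prove the extension lemma I would use that on a connected graph ``totally cyclic'' (Definition~\ref{tot}) coincides with ``strongly connected'', so each connected component $W_1,\dots,W_c$ of $\Gamma\smallsetminus T$ is strongly connected under $\phi_T$. Contract each $W_i$ to a single vertex and delete the resulting loops, obtaining a graph $H$ whose edges are exactly the edges of $U$ joining distinct components. Contraction and loop--deletion preserve the property of having no separating edge (an edge lies on a cycle if and only if its image does, since a closed walk through an edge survives these operations); as $\Gamma\smallsetminus S$ has no separating edge, neither does $H$, so $H$ admits a totally cyclic orientation. Orienting the edges of $U$ accordingly --- and the loops arbitrarily --- produces an orientation $\phi_S$ of $\Gamma\smallsetminus S$ restricting to $\phi_T$. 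A short check shows $\phi_S$ is totally cyclic: an edge with both endpoints in a single $W_i$ lies on a directed cycle because $W_i$ is strongly connected; an edge of $U$ joining $W_i$ to $W_j$ with $i\neq j$ lifts a directed cycle of $H$ through its image, each inter--component step being completed by a directed path inside the relevant strongly connected $W_k$. Hence every edge of $\Gamma\smallsetminus S$ lies on a directed cycle, i.e. $\phi_S$ is totally cyclic, which proves part (1).

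Finally, part (2) is a formal consequence of (1). A surjective order--preserving map which is a quotient of posets exhibits its target as the quotient of its source by the fibre relation, equipped with the induced order; thus $\SP_{\Gamma}$ is recovered from $\ov{\OP_{\Gamma}}$ as soon as one knows that the fibres of $\ov{\cosupp_{\Gamma}}$ --- the classes of degrees sharing a fixed support $S$ --- admit an intrinsic characterization in terms of the poset $\ov{\OP_{\Gamma}}$ alone, which is the content of the corresponding result in \cite{CV}, and the converse is obtained symmetrically. I would keep the treatment of (2) brief, since all the combinatorial difficulty is concentrated in the extension lemma of part (1).
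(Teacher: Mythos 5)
You prove the statement correctly, but by a genuinely different route: the paper does not prove this Fact internally at all. Its part (1) is quoted from \cite[Lemma 5.3.1]{CV}, which gives that the support map is a quotient of posets, combined with the observation that $\OP_{\Gamma}\to\ov{\OP_{\Gamma}}$ is a quotient by definition; its part (2) is quoted as the equivalence between (iii) and (v) in \cite[Thm 5.3.2]{CV}. What you do differently is to supply, for part (1), the combinatorial content that the paper outsources: your extension lemma (any totally cyclic orientation of $\Gamma\smallsetminus T$ extends to one of $\Gamma\smallsetminus S$ when $S\subseteq T$), proved by contracting the strongly connected components of $\Gamma\smallsetminus T$ to get a graph $H$, checking that $H$ inherits the absence of separating edges (via a cycle through $e$ mapping to a closed walk traversing $\bar{e}$ exactly once), orienting $H$ totally cyclically, and lifting directed cycles of $H$ through the strongly connected pieces. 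This argument is sound, correctly identifies that one must extend orientations upward rather than restrict downward, and makes the Fact self-contained at the cost of length; the paper's approach buys brevity by leaning on the companion paper. Two caveats. First, in the paper $\cosupp_{\Gamma}$ denotes the composite map $\OP_{\Gamma}\to\SP_{\Gamma}$, $\phi_S\mapsto S$, not the map $\OP_{\Gamma}\to\ov{\OP_{\Gamma}}$ as you read it; this is harmless, since you prove both factors are quotients and a composite of quotients of posets is again a quotient. Second, your opening claim that part (2) is a ``formal consequence'' of part (1) is inaccurate, and your own text shows why: the quotient property recovers $\SP_{\Gamma}$ from $\ov{\OP_{\Gamma}}$ \emph{together with its fibre relation}, and the real content is that this relation is definable from the abstract poset $\ov{\OP_{\Gamma}}$ alone. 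You correctly reduce this to the relevant result of \cite{CV} (the paper cites \cite[Thm 5.3.2]{CV} for precisely this), so on part (2) your treatment and the paper's coincide in substance; just do not label it formal.
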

By \cite[Lemma 5.3.1]{CV} the map $ \cosupp_{\Gamma}$ is a quotient of posets, hence so is $\ov{\cosupp_{\Gamma}}$ 
(as $\OP_{\Gamma}  \to \ov{\OP_{\Gamma}}$ is a quotient of posets by definition).
Part  (\ref{quot2}) is the equivalence between {\it (iii)} and {\it (v)} in \cite[Thm 5.3.2]{CV}.
  
\

Now, as we explained in \ref{ordeg}, to every
$\phi_S\in \OP_{\Gamma_X}$ we can associate a stable multidegree $\md(\phi_S)$ of $Y_S$ (see (\ref{degor}));
moreover two equivalent orientations define the same multidegree. This enables us to 
define two maps,  $st_X$ and ${\ov{st_X}}$, as follows
\begin{equation}\label{deg-map}
\begin{aligned}
st_X: \OP_{\Gamma_X} & \la &\ov{\OP_{\Gamma_X}} &\stackrel{\ov{st_X}}{\la} &\ST_X \\
\phi_S & \mapsto  &[\phi_S]&\mapsto &P_S^{\md(\phi_S)}.
\end{aligned}
\end{equation}
\begin{lemma}\label{equal-posets}
Let $X$ be   connected and  free from separating nodes.
Then 
\begin{enumerate}
\item
\label{OPT}
the map  $\ov{st_X}: {\ov{\OP_{\Gamma_X}}} \la \ST_X$
is an isomorphism of posets;
\item
\label{SPT}
there is a commutative diagram
 \begin{equation}\label{dia-posets}
\xymatrix{
\OP_{\Gamma_X} \ar@{->>}[rr]^{st_X} \ar@{->>}[dd]_{\cosupp_{\Gamma_X}} \ar@{->>}[dr]& &
\ST_X \ar@{->>}[dd]^{\Supp_X}  \\
& {\ov{\OP_{\Gamma_X}}} \ar[ru]_{\ov{st_X}}^{\cong} \ar@{->>}[dl]^{\ov{\cosupp_{\Gamma_X}}}&\\
\SP_{\Gamma_X} \ar@{=}[rr] & & \SP_X
}\end{equation}
where every map is a quotient of posets.
In particular
the poset $\SP_X$ is completely determined 
(up to isomorphism)
by the poset $\ST_X$.
\end{enumerate}\end{lemma}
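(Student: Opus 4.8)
The plan is to prove part (\ref{OPT}) first and then deduce part (\ref{SPT}) almost formally. For part (\ref{OPT}) I would show two things about $\ov{st_X}:\ov{\OP_{\Gamma_X}}\to\ST_X$: that it is a bijection, and that it matches the two partial orders. For bijectivity, first observe that by the degree formula (\ref{degor}) the multidegree $\md(\phi_S)$ depends only on the support $S$ and on the out-degree function $v\mapsto d^+(\phi_S)_v$; hence $st_X$ is constant on equivalence classes and $\ov{st_X}$ is well defined. It is surjective because, by the characterization recalled in \ref{ordeg}, every stable multidegree $\md$ on $Y_S$ has the form $\md(\phi_S)$ for some totally cyclic orientation $\phi_S$ of $\Gamma_X\smallsetminus S$, so every stratum $P_S^{\md}$ lies in the image. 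It is injective because an equality $P_S^{\md(\phi_S)}=P_T^{\md(\psi_T)}$ forces $S=T$ and $\md(\phi_S)=\md(\psi_T)$, i.e. the two orientations share the same support and the same out-degree function, which is exactly the definition of being equivalent in $\ov{\OP_{\Gamma_X}}$.

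Next I would match the orders by establishing two ``if and only if'' descriptions and comparing them through the bijection. On the geometric side I claim $P_T^{\me}\subseteq\ov{P_S^{\md}}$ if and only if $S\subseteq T$ and $\md\geq\me$: the necessity is precisely Fact~\ref{Comp-Pic}(\ref{cpstr}), while the sufficiency is the converse statement on the boundary stratification of $\PXgb$, obtained from the explicit description of degenerations of line bundles in \cite{ctheta}. On the combinatorial side I claim $[\phi_S]\geq[\psi_T]$ in $\ov{\OP_{\Gamma_X}}$ if and only if $S\subseteq T$ and $\md(\phi_S)\geq\md(\psi_T)$. The forward implication here is the easy remark that if $\psi_T=(\phi_S)_{|\Gamma_X\smallsetminus T}$ then deleting the edges of $T\smallsetminus S$ can only lower out-degrees, so $d^+(\psi_T)_v\leq d^+(\phi_S)_v$ for every $v$ and thus $\md(\phi_S)\geq\md(\psi_T)$ by (\ref{degor}); the reverse implication is the combinatorial realization result of \cite{CV}, namely that out-degree domination can always be achieved within equivalence classes by a genuine restriction of totally cyclic orientations. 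Since $\ov{st_X}$ preserves both support and multidegree by construction, these two characterizations together show that $[\phi_S]\geq[\psi_T]$ if and only if $\ov{st_X}([\phi_S])\geq\ov{st_X}([\psi_T])$, so the bijection $\ov{st_X}$ is an isomorphism of posets.

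With part (\ref{OPT}) in hand, part (\ref{SPT}) is formal. The diagram (\ref{dia-posets}) commutes by inspection: the upper triangle is the factorization $st_X=\ov{st_X}\circ(\OP_{\Gamma_X}\to\ov{\OP_{\Gamma_X}})$, and the lower triangle commutes because $\Supp_X\circ\ov{st_X}$ and $\ov{\cosupp_{\Gamma_X}}$ both send $[\phi_S]$ to $S$. That every arrow is a quotient of posets then follows step by step: $\OP_{\Gamma_X}\to\ov{\OP_{\Gamma_X}}$ is one by definition; $\cosupp_{\Gamma_X}$ and $\ov{\cosupp_{\Gamma_X}}$ are quotients by Fact~\ref{quot}; $\ov{st_X}$ is an isomorphism and hence trivially a quotient; $st_X$ is a quotient as the composite of a quotient with an isomorphism; and $\Supp_X=\ov{\cosupp_{\Gamma_X}}\circ\ov{st_X}^{-1}$ is a quotient for the same reason. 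Finally, combining the isomorphism $\ST_X\cong\ov{\OP_{\Gamma_X}}$ with Fact~\ref{quot}(\ref{quot2}), which asserts that $\SP_{\Gamma_X}$ is determined up to isomorphism by $\ov{\OP_{\Gamma_X}}$, and with the identification $\SP_X=\SP_{\Gamma_X}$, one concludes that $\SP_X$ is determined up to isomorphism by $\ST_X$.

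The hard part will be the two sufficiency statements in the second paragraph, and everything else is bookkeeping. Concretely, Fact~\ref{Comp-Pic}(\ref{cpstr}) only supplies the easy necessity direction of the geometric criterion, so the real work is to import its converse from the degeneration analysis of $\PXgb$ in \cite{ctheta}, and dually to invoke from \cite{CV} the fact that out-degree domination is always realized by an actual restriction of totally cyclic orientations. Once these two equivalences are available, the identification of the two orders under the bijection $\ov{st_X}$, and hence the whole statement, drops out formally.
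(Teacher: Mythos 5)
Your plan has a genuine gap, and it is exactly at the step you yourself flag as the hard part: the two ``sufficiency'' statements you propose to import from \cite{ctheta} and \cite{CV} are not merely absent from those references --- they are false, so the order-matching step of your argument cannot be repaired by any citation. Concretely, take $\Gamma_X=K_4$, e.g.\ $X$ a stable curve of genus $3$ with four rational components pairwise meeting in one node. Let $S=\emptyset$ and let $\phi$ be the totally cyclic orientation $1\to 2$, $1\to 3$, $2\to 3$, $2\to 4$, $3\to 4$, $4\to 1$, so that $d^+(\phi)=(2,2,1,1)$ and, by (\ref{degor}), $\md(\phi)=(1,1,0,0)$. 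Let $T=\{12,34\}$ (a perfect matching), so that $\Gamma_X\smallsetminus T$ is a $4$-cycle, and let $\psi_T$ be a cyclic orientation of it, so $d^+(\psi_T)=(1,1,1,1)$ and $\md(\psi_T)=(0,0,0,0)$. Then $S\subset T$ and $\md(\phi)\geq\md(\psi_T)$, i.e.\ your domination condition holds; but $[\phi]\not\geq[\psi_T]$ in $\ov{\OP_{\Gamma_X}}$, because a representative $\phi'$ of $[\phi]$ restricting to a representative of $[\psi_T]$ would need out-degree drops $(1,1,0,0)$ across $T$, while the edge $34\in T$ necessarily produces a drop of $1$ at vertex $3$ or at vertex $4$. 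The geometric claim fails on the same example: a sheaf of $P_T^{\me}$ with $\me=(0,0,0,0)$ is a limit only of line bundles whose multidegree equals $\me$ plus, for each node of $T$, the indicator vector of one of its two branches --- that is, of multidegree $(1,0,1,0)$, $(1,0,0,1)$, $(0,1,1,0)$ or $(0,1,0,1)$, never $(1,1,0,0)$ --- so $P_T^{\me}\not\subset \ov{P_{\emptyset}^{\md(\phi)}}$ although domination holds. (Note also that if your geometric ``iff'' were true while the combinatorial one fails, the lemma itself would be false.) So Fact~\ref{Comp-Pic}(\ref{cpstr}) is genuinely a one-way implication, and neither poset carries the order you describe.

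The condition that actually characterizes both orders is orientation compatibility itself, equivalently the requirement that $\md-\me$ be a sum, over the nodes of $T\smallsetminus S$, of indicator vectors of one of the two branches of each node; this is strictly stronger than componentwise domination. This is why the paper does not argue your way: its proof rests on \cite[Prop. 5.1]{caporaso}, which describes the closures of the strata of $\PXgb$ precisely in these orientation-theoretic terms, so that $st_X$ and $\ov{st_X}$ are order-compatible in both directions and the bijection $\ov{st_X}$ is an isomorphism of posets; essentially all the geometric content of part (1) is carried by that citation. The portions of your proposal that do not pass through the false equivalences --- well-definedness, surjectivity and injectivity of $\ov{st_X}$, and the formal deduction of part (2) from part (1) together with Fact~\ref{quot}(\ref{quot2}) --- are correct and coincide with the paper's argument.
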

 \begin{proof}
The maps $st_X$ and $\ov{st_X}$ are surjective by what we said in \ref{ordeg}.
 Moreover, by \cite[Prop. 5.1]{caporaso}, they are
 morphisms of posets. From the definitions (\ref{degor}) and \ref{equiv-or} it is clear that
 $\ov{st_X}$ is bijective, and hence an isomorphism of posets.

The commutativity of the diagram is clear by what we said above.
Finally, by Fact~\ref{quot}(\ref{quot2}) we know that  $\ov{\OP_{\Gamma_X}}$ completely determines  $ \SP_X$ 
  as poset, hence part (\ref{SPT}) follows  from part (\ref{OPT}).
 \end{proof}

\subsection{Recovering gluing points from the Theta divisor}
\label{rectheta}
\begin{lemma}
\label{cod1desc}
 Let $P^{\md}_S$ be a codimension-1 stratum of $\PXgb$
and let $h$ be the number of irreducible components of $\Theta(X)\cap P^{\md}_S$.
Then $S$ is a $C1$-set  of cardinality h.
\end{lemma}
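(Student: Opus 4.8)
The plan is to read off both assertions directly from the two stratifications recalled in Facts~\ref{Comp-Pic} and~\ref{Theta}, combined with the combinatorial characterization of C1-sets in Lemma-Definition~\ref{C1set}. The statement is really a bookkeeping of these facts, so I expect no deep obstacle; the only point requiring attention is the use of stability to exclude rational components.

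First I would dispose of the claim that $S$ is a C1-set. By the observation following~(\ref{codimS}), the codimension in $\PXgb$ of any stratum lying over $S$ equals $\codim(S)=b_1(\Gamma_X(S))$. Since $P^{\md}_S$ is by hypothesis a codimension-$1$ stratum, this forces $\codim(S)=1$, which is exactly condition~(\ref{C1set1}) of Lemma-Definition~\ref{C1set}; hence $S\in\Set X$, and by the equivalent condition~(\ref{C1set2}) the graph $\Gamma_X(S)$ is a cycle. A cycle with $\#S$ edges has $\#S$ vertices, and by~\ref{GammaXS} the vertices of $\Gamma_X(S)$ are the connected components of $\YS$; therefore $\gS=\#S$, and I write $\YS=\sqcup_{i=1}^{\gS}Y_i$.

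It then remains to identify $h$ with $\#S$. By Fact~\ref{Theta}(ii) the intersection $\Theta(X)\cap P^{\md}_S$ is precisely the divisor $\TSd\cong W^0_{\md}(\YS)$, and by Fact~\ref{Theta}(iii) its irreducible components are the loci $(\TSd)_i$ indexed by exactly those $i$ for which the component $Y_i$ has positive arithmetic genus. Thus $h$ counts the positive-genus connected components of $\YS$, so the proof reduces to showing that every $Y_i$ has positive genus. This is the one step where stability of $X$ is used, and it is exactly the content of Remark~\ref{C1st}: a component $Y_i$ is free from separating nodes, so were it of arithmetic genus $0$ it would be a smooth rational curve attached to the rest of $X$ only at the two nodes of $S$ incident to its vertex in the cycle $\Gamma_X(S)$, contradicting stability. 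Hence all $\gS=\#S$ components contribute, giving $h=\gS=\#S$, and $S$ is a C1-set of cardinality $h$, as claimed.
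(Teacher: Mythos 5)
Your proposal is correct and follows essentially the same route as the paper's own proof: the codimension function from~(\ref{codimS}) together with Lemma--Definition~\ref{C1set} gives that $S$ is a C1-set, and then Remark~\ref{C1st} (stability forcing positive genus on each component of $Y_S$) combined with Fact~\ref{Theta}(iii) identifies $h$ with $\gamma_S=\#S$. Your only addition is to spell out the stability argument behind Remark~\ref{C1st}, which the paper simply cites; that elaboration is accurate.
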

\begin{proof}
We have already proved most of the statement in \ref{poset}.
The only part that needs to be justified is the one concerning $\Theta(X)$.
By \ref{C1st}
every connected component of $\YS$ has positive genus.
Now, according
to Fact \ref{Theta}(iii), the number $h$ of irreducible components
of $\Theta(X)\cap P^{\md}_S$ is equal to $\gamma_S=\#S$.
\end{proof}

\begin{nota}
 \label{bipglu}
The following set-up will be fixed throughout  the rest of this subsection.
 $X$ is a stable curve of genus $g$, $\sep=\emptyset$, and $S\in \Set X$ is a $C1$-set   of
cardinality $h$. As usual
$
\nu_S:\YS\to X
$
denotes the normalization at $S$.
We have
 $\YS=\coprod_{i=1}^h Y_i$, with   $Y_i$
  connected,  of arithmetic genus $g_i>0$,
free from separating nodes. The gluing set of $\nu_S$ is denoted
 $\{p_1,q_1,  \ldots , p_h,q_h\}$
with
$\nu_S(p_j)=\nu_S(q_{j+1})$ and
 $p_j,q_j\in Y_j$.

The pull-back via the partial normalization $\nu_S$ induces an exact sequence
$$
0\to k^*\to \Pic X \stackrel{\nu_S^*}{\longrightarrow} \Pic \YS=\prod_{i=1}^h \Pic Y_i\to 0.
$$
\end{nota}
In the following statement we use notation (\ref{BN}).

\begin{lemma}\label{pb}
Fix $\md \in \Sigma(X)$. Let $M$ a general line bundle in $\Pic^{\md}\YS$, $M_i:=M_{|Y_i}$
and $\un{d_i}:=\mdeg M_i$. Let $y_i$ be a fixed smooth point of $Y_i$. Then for    $i=1, \ldots, h$ the following properties hold.
\begin{enumerate}[(i)]
\item
\label{pb1}
$h^0(Y_i,M_i)=1$ (hence $h^0(Y_S,M)=h$).
\item
\label{pbs}
Set $\un{d_i}(-y_i):=\mdeg M_i(-y_i)$. Then $\un{d_i}(-y_i)$ is semistable.
\item
\label{pb2}
$M_i$ does not have a base point in $y_i$
(i.e. $h^0(Y_i,M_i(-y_i))=0$).
\item
\label{pb3} The restriction of the pull-back map, $\nu_S^*:W_{\md}(X)\la \Pic^{\md}\YS$, is birational.
\item
\label{pb4}
$\dim W_{\un{d_i}}^1(Y_i)\leq g_i-2$
for every $1\leq i\leq h$.
\item
\label{pb6}
For any point $p_k$, define
\begin{equation}
\label{Tq1}
T_{p_k}=\left\{M\in \Pic^{\md}\YS \smallsetminus W_{\md}^h(\YS): \:
\begin{aligned}
&h^0(\YS, M(-p_k))=h \text{ and } \\
&h^0(\YS, M(-q_j))<h, \  \forall 1\leq j\leq h.
\end{aligned}
\right\}.
\end{equation}
Define $T_{q_k}$ by replacing   $p_k$ with $q_k$ and $q_j$ with $p_j$
in (\ref{Tq1}).
Then
$$\Pic^{\md}\YS\setminus \nu_S^*(W_{\md}(X))=\bigcup_{k=1}^h (T_{p_k}\cup T_{q_k}).
$$
\end{enumerate}
\end{lemma}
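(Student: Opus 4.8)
The plan is to localise as much as possible: treat (i)--(iii) and (v) as statements about the individual curves $Y_i$ via the restricted multidegree $\un{d_i}$, and reserve the two genuinely global assertions, (iv) and (vi), for a section-matching analysis around the cycle $\Gamma_X(S)$. \emph{Setup.} Recall from \ref{ordeg} that $\md\in\Sigma(X)$ corresponds to a totally cyclic orientation $\phi$ on $\Gamma_X$, with component-degrees $d_v=g_v-1+d^+(\phi)_v$. Summing over the vertices of $Y_i$, and using that the two edges of $S$ meeting $Y_i$ receive opposite directions (since $\Gamma_X(S)$ is a cycle carrying a totally cyclic orientation, so exactly one points out of $Y_i$), a direct count gives $\deg\un{d_i}=g_i$. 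Feeding the same bookkeeping into the Basic Inequality (\ref{BIm}), total cyclicity forces, for every subcurve $Z\subseteq Y_i$ with $c$ connected components, at least $c$ outgoing edges in $\Gamma_X$ (each proper connected piece has one), whence $d_Z\ge g_Z$ for every proper $Z\subsetneq Y_i$; thus $\un{d_i}$ satisfies (\ref{BIm}) \emph{strictly} on every proper subcurve. This strictness is what I would use repeatedly.

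For (v), Serre duality on $Y_i$ gives $h^0(Y_i,M_i)\ge 2\iff h^0(Y_i,\omega_{Y_i}\otimes M_i^{-1})\ge 1$, so $M_i\mapsto \omega_{Y_i}\otimes M_i^{-1}$ identifies $W^1_{\un{d_i}}(Y_i)$ with an effective locus $W_{\me}(Y_i)$ of total degree $g_i-2$; bounding this by the dimension of the space of effective divisors of that multidegree (the Abel map from $\prod_v C_v^{(e_v)}$) yields $\dim W^1_{\un{d_i}}(Y_i)\le g_i-2$. Since $\deg\un{d_i}=g_i$, Riemann--Roch forces $h^0(Y_i,M_i)\ge 1$ always, while (v) shows the locus $h^0\ge 2$ is proper; this gives (i) for general $M$, and then $h^0(\YS,M)=\sum_i h^0(Y_i,M_i)=h$. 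For (ii), subtracting the smooth point $y_i$ lowers $d_Z$ by one exactly on the subcurves meeting $y_i$, and the strict inequality from the Setup absorbs this loss, so $\un{d_i}(-y_i)$ is semistable of degree $g_i-1$. Then (iii) is immediate: a semistable multidegree of degree $g_i-1$ has $W_{\un{d_i}(-y_i)}(Y_i)$ a proper divisor in $\Pic^{\un{d_i}(-y_i)}Y_i$, so $h^0(Y_i,M_i(-y_i))=0$ for general $M_i$.

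For (iv) and (vi) I would exploit the extension $0\to k^*\to\Pic X\to \Pic \YS\to 0$ of \ref{bipglu}: a bundle $L$ with $\nu_S^*L=M$ is got by gluing the fibres of $M$ at $p_j$ and $q_{j+1}$, the freedom being a single $\tau\in k^*$ around the cycle, and a section of $L$ is a tuple $(\lambda_i s_i)$ of the (generically unique, by (i)) sections $s_i\in H^0(Y_i,M_i)$ matching at each node: $\lambda_j s_j(p_j)=c_j\,\lambda_{j+1}s_{j+1}(q_{j+1})$ with $c_j=1$ for $j<h$ and $c_h=\tau$. When all values $s_i(p_i),s_i(q_i)$ are nonzero---which by (iii) holds for general $M$ at $p_i,q_i$---the ratios fix the $\lambda_i$ up to scale and pin down a \emph{unique} $\tau$, proving that $\nu_S^*\colon W_{\md}(X)\to\Pic^{\md}\YS$ is birational, i.e.\ (iv). For (vi) I would run the same system allowing vanishing at gluing points: a component with $s_i(p_i)=s_i(q_i)=0$, or the simultaneous presence of a ``$p$-zero'' and a ``$q$-zero'' component, always admits a nonzero solution (set the intervening $\lambda$'s to $0$), so $M$ lies in the image; the system has \emph{no} nonzero solution precisely when all $s_j(q_j)\ne 0$ but some $s_k(p_k)=0$, or symmetrically. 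Reading ``$s_k(p_k)=0$'' as $h^0(\YS,M(-p_k))=h$ and ``$s_j(q_j)\ne 0$'' as $h^0(\YS,M(-q_j))<h$, these two cases are exactly $\bigcup_k T_{p_k}$ and $\bigcup_k T_{q_k}$.

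The delicate point is (vi). The cyclic case-analysis of the matching system must be organised so that every base-point configuration is shown either to admit a descending section or to fall into one of the $T_{p_k},T_{q_k}$; and one must separately verify the inclusion $W^h_{\md}(\YS)\subseteq \nu_S^*(W_{\md}(X))$ that is built into the definition of the $T$'s. The hard part will be this last inclusion together with the propagation argument around the cycle: here I expect to use the freeness of $Y_i$ from separating nodes to ensure that, when $h^0(Y_i,M_i)\ge 2$, the evaluation $H^0(Y_i,M_i)\to k^2$ at $(p_i,q_i)$ is surjective, so that such a component can absorb any matching imposed by its neighbours.
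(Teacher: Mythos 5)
Your setup, your part (ii), and your determinant analysis for (iv) and for the case分析 in (vi) essentially reproduce the paper's arguments: the paper also obtains $\deg M_i=g_i$ from stability, proves (ii) by the same subcurve estimate, and proves (iv) and the classification of non-descending configurations via the same gluing system and the equation $c\,\prod_i a_i^p=\prod_i a_i^q$. However, the two places where you depart from the paper both contain genuine gaps.

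The first gap is in your proof of (v), and it propagates to your (i) and (iii). Serre duality does identify $W^1_{\un{d_i}}(Y_i)$ with a locus $W_{\me}(Y_i)$ with $|\me|=g_i-2$ ($Y_i$ is Gorenstein, so this step is fine), but you then bound $\dim W_{\me}(Y_i)$ by the dimension of the image of the Abel map. On a \emph{reducible} nodal curve this is not a valid bound: a line bundle can have $h^0>0$ because of a section vanishing identically on a subcurve, and such line bundles are not in the image of the Abel map. Without multidegree inequalities controlling every such stratum, $W_{\me}$ can even equal all of $\Pic^{\me}$ (take two components meeting in $\delta$ nodes and put a very large degree on one of them). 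Controlling these strata is precisely the content of the results the paper invokes: your assertion in (iii) that ``a semistable multidegree of degree $g_i-1$ has $W$ a proper divisor'' is not immediate, it is Beauville's theorem (\cite{beau}, see Prop.~1.3.7 of \cite{ctheta}), and your (v) would require the same kind of stratified dimension count. Since your (i) is deduced from your (v), it is also not established. The paper's route is different and avoids this: it proves (i) first, by contradiction, using that $W_{\md}(X)$ is irreducible of dimension $g-1$ (Thm~3.1.2 of \cite{ctheta}) together with the remark that if $h^0(\YS,M)\geq h+1$ for all $M$ then every fiber $F_M(X)\cong k^*$ of $\Pic X\to \Pic\YS$ lies in $W_{\md}(X)$, forcing $\dim W_{\md}(X)=g$; and it deduces (v) \emph{afterwards} from the birationality (iv): the locus of $1$-dimensional fibers of a birational map from the irreducible $(g-1)$-dimensional $W_{\md}(X)$ has image of dimension at most $\sum_i g_i-2$, whence $\dim W^1_{\un{d_i}}(Y_i)\leq g_i-2$.

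The second gap is the inclusion $W^h_{\md}(\YS)\subseteq \nu_S^*(W_{\md}(X))$ in (vi), which you defer as ``the hard part'' and propose to obtain from surjectivity of the evaluation map $H^0(Y_i,M_i)\to k^2$ at $(p_i,q_i)$ whenever $h^0(Y_i,M_i)\geq 2$, using freeness from separating nodes. That mechanism is not correct: base points are a property of the line bundle, not of the curve, and nothing prevents $M_i$ from having a base point at $p_i$ or $q_i$ even when $h^0(Y_i,M_i)\geq 2$ and $Y_i$ has no separating nodes. The correct argument is much simpler and purely linear-algebraic (it is the same observation the paper uses in its proof of (i)): descending a section of $M$ to a fixed $L\in F_M(X)$ imposes at most $h$ linear conditions, one per node of $S$, on $H^0(\YS,M)$; so if $h^0(\YS,M)\geq h+1$ these conditions admit a nonzero solution for \emph{every} gluing, hence all of $F_M(X)$ lies in $W_{\md}(X)$ and $M\in\nu_S^*(W_{\md}(X))$. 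With that inclusion in hand, your case analysis of the matching system does complete (vi) exactly as in the paper.
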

\begin{proof}
Since $\md$ is stable, Theorem 3.1.2 of \cite{ctheta} yields that $W_{\md}(X)$ is irreducible of
dimension $g-1$.

For any $M\in \Pic \YS$ we set $F_M(X):=\{L\in \Pic X:\nu^*L=M\}\cong k^*$.

To prove (\ref{pb1}), observe that the stability of $\md$
yields $\deg(M_i)=g_i$. Therefore the theorem of Riemann-Roch
gives   $h^0(Y_i, M_i)\geq 1$.
Suppose, by contradiction, that $h^0(Y_i, M_i)>1$ for some $i$.
Then $h^0(\YS,M)=\sum_{i=1}^h h^0(Y_i, M_i)\geq h+1$ for every $M\in \Pic^{\md}\YS$.

This implies that $F_M(X)\subset W_{\md}(X)$ (indeed there are at most $h$ conditions
on the global sections of $M$ to descend to a global section of a fixed $L\in F_M(X)$).
Therefore
$$
\dim W_{\md}(X)=\dim \Pic^{\md}\YS +1=\sum_{i=1}^h g_i+1=g
$$
a contradiction. This proves (\ref{pb1}).

For  (\ref{pbs}) and (\ref{pb2}),
set
$\md_i'=\un{d_i}(-y_i)$; observe   that   $|\md_i'|=g_i-1$. Let $Z\subset Y_i$ be a subcurve of $Y_i$ and
let $\w{Z}:=\nu(Z)\subset X$.
Then, of course, $g_Z\leq g_{\w{Z}}$.
Denoting $d'_{i,Z}=|(\md_i')_{|Z}|$ the total degree of $\md_i'$ restricted to $Z$, and by
$d_{\w{Z}}=|\md _Z|$, we have

\begin{displaymath}
d'_{i,Z}=\left\{\begin{array}{ll}
d_{\w{Z}}\geq g_{\w{Z}}\geq g_Z &\text{ if } y_i\not\in Z,\\
d_{\w{Z}}-1\geq g_{\w{Z}}-1\geq g_Z-1 &\text{ if } y_i\in Z,\\
\end{array}\right.
\end{displaymath}
where we used that $d_{\w{Z}}\geq  g_{\w{Z}}$  ($\md$ is stable). So (\ref{pbs}) is proved.
We can therefore use a result due to A. Beauville (\cite{beau}, see also Proposition 1.3.7 in \cite{ctheta}),
stating that every irreducible component of $W_{\un{d_i}'}(Y_i)$ has dimension equal to
$g_i-1$, and in particular that $W_{\un{d_i}'}(Y_i)\neq \Pic^{\un{d_i}'}(Y_i)$.
 Therefore for the general $M\in \Pic^{\md}(\YS)$, we have that $h^0(Y_i, M_i(-y_i))=0$
and this proves part (\ref{pb2}).

In order to prove (\ref{pb3}), we need to make  the isomorphism
$F_M(X)\cong k^*$ explicit. Any $c\in k^*$ determines a unique
$L^c\in F_M(X)$, defined as follows.
For every $j=1,\ldots ,h$ consider the two fibers of $M$ over
 $p_j$ and $q_{j+1}$
(with $q_{h+1}=q_1$ as usual, recall that  $\nu_S$ glues $p_j$ with $q_{j+1}$)
and fix an isomorphism between them. Then $L^c\in F_M(X)$ is obtained by
gluing $M_{p_j}$ to $M_{q_{j+1}}$  via the isomorphism
$$
\begin{sis}
& M_{p_j}\stackrel{\cong}{\rightarrow} M_{q_{j+1}} & \text{ for } j=1, \ldots, h-1 , \\
& M_{p_h}\stackrel{\cdot c}{\rightarrow} M_{q_1} &  \\
\end{sis}$$
where the last isomorphism is given by multiplication by $c$.
Conversely, every $L\in F_M(X)$ is of type $L^c$, for a unique $c\in k^*$.

Now, by
(\ref{pb1}) we known that a general $M\in \Pic^{\md}(\YS)$ does not belong
to $W_{\md}^h(\YS)$, i.e. we have $h^0(Y_i, M_i)=1$ for all $i=1,\ldots,h$.
Take a generator, call it $\alpha_i$, of  $H^0(Y_i, M_i)$ and set $a_i^p:=\alpha_i(p_i)$ and $a^{q}_i:=\alpha_i(q_{i})$.
A section $\alpha=\sum_{i=1}^h x_i \alpha_i\in H^0(\YS, M)$ descends to a section of
$L^c\in F_M(X)$ on $X$ if and only if it verifies the following system of equations:
\begin{equation}\label{syst}
\begin{sis}
& x_i a_i^p=\alpha(p_i)=\alpha(q_{i+1})=x_{i+1}a_{i+1}^q & \text{ for }
1\leq i\leq h-1, \\
& c x_h a_h^p=c \alpha(p_h)=r(q_1)=x_1a_1^q. &
\end{sis}
\end{equation}
The above system of $h$ equations in the $h$ unknown $x_1, \ldots, x_h$
admits a non-zero solution if and only if the determinant of the associated matrix is zero,
that is if and only if
\begin{equation}\label{deter}
c \cdot \prod_{i=1}^ha_i^p= \prod_{i=1}^h a_i^q.
\end{equation}
Since a general $M\in \Pic^{\md}(\YS)$ verifies $a_i^p\neq 0$ and $a_i^q\neq 0$
for every $i$ (by part (\ref{pb2})), the above equation has
a unique solution $c$ and therefore $F_M(X)$ has a unique point in $W_{\md}(X)$.
This proves (\ref{pb3}), since   $\dim W_{\md}(X)=\dim \Pic^{\md}(\YS)=g-1$.

Now we prove (\ref{pb4}). The fiber of the birational map $\nu^*:W_{\md}(X)\to \Pic^{\md}\YS$
over $ W^h_{\md}(\YS)$ has dimension $1$; hence, as $W_{\md}(X)$  is irreducible of dimension
$\sum_{i=1}^h g_i$, we have
 $\dim W^h_{\md}(\YS)\leq \sum_{i=1}^h g_i-2$. Since $W^h_{\md}(Y)=\bigcup_{i=1}^h
(\pi_i)^{-1}(W_{\un{d_i}}^1(Y_i))$, where $\pi_i:\Pic^{\md}(\YS) \to \Pic^{\un{d_i}}(Y_i)$ is the projection,
we deduce that $\dim W_{\un{d_i}}^1(Y_i)\leq g_i-2$.

Finally (\ref{pb6}). As observed before, we have
$$
\Pic^{\md}\YS\smallsetminus \nu_S^*(W_{\md}(X))
\subset \Pic^{\md}\YS\smallsetminus W^h_{\md}(\YS).
$$
 With the above notation, a line bundle
$ M\in \Pic^{\md}\YS\smallsetminus W^h_{\md}(\YS)$ does not belong to $\nu^*(W_{\md}(X))$
if and only the equation (\ref{deter}) does not admit a solution $c\in k^*$.
This happens precisely when either $a^p_k=0$ for at least one $k$ and
$a^q_i\neq 0$ for any $i$, or if $a^q_k=0$ for  at least one $k$ and
$a^p_i\neq 0$ for any $i$. These conditions are easily seen to be equivalent to
the fact that $M\in \cup_{k}(T_{p_k}\cup T_{q_k})$.
\end{proof}

\begin{prop}\label{theta}
Let $X$ be such that $\sep = \emptyset$; pick $S\in \Set X$ and
 $\md\in \Sigma(X)$.
The image of the pull-back map
$\nu_S^*:W_{\md}(X)\to \Pic^{\md}\YS$ uniquely determines $\nu_S^{-1}(S)$,
the gluing set of $\nu_S$.
\end{prop}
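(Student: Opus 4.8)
The plan is to reconstruct the gluing set $\nu_S^{-1}(S)=\{p_1,q_1,\ldots,p_h,q_h\}$ from the divisorial part of the \emph{complement} of the image. By Lemma~\ref{pb}(\ref{pb6}), knowing $\nu_S^*(W_{\md}(X))$ is equivalent to knowing the set
$$Z:=\Pic^{\md}\YS\smallsetminus \nu_S^*(W_{\md}(X))=\bigcup_{k=1}^h(T_{p_k}\cup T_{q_k}),$$
so the whole problem reduces to recovering the $2h$ points from $\overline{Z}$.

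First I would identify the irreducible components of $\overline{Z}$. Write $\pi_k\colon \Pic^{\md}\YS=\prod_{i=1}^h\Pic^{\md_i}Y_i\to \Pic^{\md_k}Y_k$ for the $k$-th projection, and for $y\in Y_k$ put $B_y:=\{N\in \Pic^{\md_k}Y_k : h^0(Y_k,N(-y))>0\}$. The defining conditions of $T_{p_k}$ say that a general member $M$ has $h^0(Y_i,M_i)=1$ for all $i$, that its unique section on $Y_k$ vanishes at $p_k$, and that its unique section on $Y_j$ does not vanish at $q_j$ for any $j$; hence $T_{p_k}\subset \pi_k^{-1}(B_{p_k})$. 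By Lemma~\ref{pb}(\ref{pbs}) the multidegree $\md_k(-p_k)$ is semistable of total degree $g_k-1$, so $B_{p_k}$ is a translate of the theta divisor $W_{\md_k(-p_k)}(Y_k)$, an irreducible divisor of $\Pic^{\md_k}Y_k$ (dimension $g_k-1$ by Beauville's theorem \cite{beau} and irreducibility by \cite[Thm~3.1.2]{ctheta}, as used in the proof of Lemma~\ref{pb}). The further conditions cutting out $T_{p_k}$ inside $\pi_k^{-1}(B_{p_k})$ are either open (non-vanishing at the $q_j$) or of codimension $\geq 2$ (removing $W^h_{\md}(\YS)=\bigcup_i\pi_i^{-1}(W^1_{\md_i}(Y_i))$, which is of codimension $\geq 2$ by Lemma~\ref{pb}(\ref{pb4})); therefore $\overline{T_{p_k}}=\pi_k^{-1}(B_{p_k})$, and likewise $\overline{T_{q_k}}=\pi_k^{-1}(B_{q_k})$. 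Thus $\overline{Z}=\bigcup_{k=1}^h\bigl(\pi_k^{-1}(B_{p_k})\cup \pi_k^{-1}(B_{q_k})\bigr)$ is a union of $2h$ irreducible divisors, each pulled back from a single factor.

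Next I would recover, from each component, both the factor $Y_k$ it sits over and the point it encodes. The index $k$ is intrinsic: a divisor of $\prod_i\Pic^{\md_i}Y_i$ has the form $\pi_k^{-1}(D_k)$ with $D_k$ proper exactly when it is invariant under translation by $\prod_{i\neq k}J(Y_i)$ but not by $J(Y_k)$, and this singles out a unique $k$ per component. Grouping the components of $\overline{Z}$ accordingly produces, for every $k$, precisely the two divisors $B_{p_k},B_{q_k}\subset \Pic^{\md_k}Y_k$. It remains to read off the point from the divisor. Since $B_y=B_{y'}\otimes \O_{Y_k}(y-y')$, if $B_y=B_{y'}$ then translation by $\O_{Y_k}(y-y')$ fixes the (translated) theta divisor; because $h^0$ of the theta divisor equals $1$ (Fact~\ref{Theta}(i)) its translation stabilizer is trivial, forcing $\O_{Y_k}(y-y')\cong \O_{Y_k}$, i.e. $y\sim y'$, whence $y=y'$ as $g_k>0$ (Remark~\ref{C1st}) and $y,y'$ are smooth points. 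Thus $y\mapsto B_y$ is injective, so $B_{p_k}$ determines $p_k$ and $B_{q_k}$ determines $q_k$; taking the union over $k$ reconstructs $\nu_S^{-1}(S)$.

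The main obstacle is this final extraction of the actual point from its translated theta divisor: the projection and translation-invariance bookkeeping only returns the divisors $B_y$ as subvarieties, and one must invoke the principal polarization (triviality of the stabilizer, $h^0=1$) together with the positivity $g_k>0$ to conclude that distinct smooth gluing points yield distinct divisors. A secondary technical point to secure along the way is the irreducibility of $B_{p_k}$ for the merely \emph{semistable} multidegree $\md_k(-p_k)$, which is what guarantees that $\overline{T_{p_k}}$ is the single divisor $\pi_k^{-1}(B_{p_k})$ rather than a proper union of components.
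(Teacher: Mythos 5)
Your starting point (Lemma~\ref{pb}(\ref{pb6})) is the same as the paper's, but the reconstruction you build on it rests on the irreducibility of $B_{p_k}$, which you yourself flag as a ``secondary technical point'' --- and this is where the proof breaks. The multidegree $\md_k(-p_k)$ is only \emph{semistable} (Lemma~\ref{pb}(\ref{pbs})), while \cite[Thm 3.1.2]{ctheta} requires a \emph{stable} multidegree; for strictly semistable multidegrees $W$ is reducible in general. Concretely, let $Y_k=E_1\cup E_2$ be two elliptic curves meeting at two points $m_1,m_2$: then $W_{(0,2)}(Y_k)$ is the union of the two $2$-dimensional loci $\{M:\ M_{|E_1}\cong\O_{E_1}\}$ and $\{M:\ M_{|E_2}\cong\O_{E_2}(m_1''+m_2'')\}$ (where $m_i''\in E_2$ lie over the nodes). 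This situation actually occurs in the setting of the proposition: glue an elliptic curve $E_3$ to $Y_k$ at a point $p\in E_1$ and a point $q\in E_2$, getting a stable curve $X$ of genus $5$ with $S=\{$the two new nodes$\}$ a C1-set of cardinality $2$, and take $\md=(1,2,1)$, which is stable; then $\md_k(-p)=(0,2)$, so $B_p$ has two components while $B_q$ (coming from the stable multidegree $(1,1)$) has one. Hence $\ov{Z}$ need not consist of $2h$ irreducible divisors, the grouping ``exactly two divisors per factor'' collapses, and your extraction of points from components no longer determines anything: repairing it would require proving that the part of $\ov{Z}$ over the $k$-th factor decomposes as $\pi_k^{-1}(B_y\cup B_{y'})$ for a \emph{unique} pair of points, which is essentially the statement to be proved. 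This is precisely why the paper never uses the full $W_{\md_k(-y_k)}(Y_k)$: it works only with the single irreducible component $A_k$ given by the image of the Abel map (\cite[Prop.~3.2.1]{ctheta}), which in addition has no fixed base point, and it obtains uniqueness by contradiction --- a second description of $T_1$ would force the irreducible divisor $D_{\tilde p_1}$ inside some $\ov{T_{p_j}}$, giving its general member a base point at $p_j\neq \tilde p_1$.

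Your final step has a second unjustified claim: that $B_y=B_{y'}$ forces $\O_{Y_k}(y-y')\cong\O_{Y_k}$ ``because $h^0$ of the theta divisor equals $1$''. The inference from $h^0=1$ to triviality of the translation stabilizer is invalid in that generality: on a product $A\times B$ of abelian varieties the divisor $\Theta_A\times B$ has $h^0=1$ yet is invariant under every translation by $B$. What makes the implication work on an abelian variety is ampleness plus Riemann--Roch ($\#K(\Theta)=h^0(\Theta)^2$), and neither is available where you need it: your divisor $B_y$ lives on the non-compact variety $\Pic^{\md_k}Y_k$, and when $\md_k(-y)$ is strictly semistable $\Pic^{\md_k(-y)}Y_k$ is not even a stratum of $\ov{P^{g_k-1}_{Y_k}}$, so Fact~\ref{Theta}(i) --- which concerns the ample divisor $\Theta(Y_k)$ on the compactified Jacobian --- does not apply as cited, nor does invariance of a piece of theta on one stratum obviously propagate to invariance of the whole ample divisor. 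Triviality of the translation stabilizer of theta for a semiabelian Jacobian is essentially the assertion that the polarization is principal in the degenerate sense; it is true, but it needs the degeneration theory of Alexeev--Namikawa rather than the bare equality $h^0=1$. So both pillars of your argument --- the component bookkeeping and the injectivity of $y\mapsto B_y$ --- are unsupported, and the first is contradicted by the example above.
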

\begin{proof}
Denote
$ \Pic^{\md}\YS\smallsetminus \nu_S^*(W_{\md}(X))=T_1\coprod T_2
$ where, using  Lemma~\ref{pb} (\ref{pb6}) we have
\begin{equation}
\label{T12}
T_1:=\cup_{k=1}^hT_{p_k},\  \  \  T_2:=\cup_{k=1}^hT_{q_k},
\end{equation}
for some set $\{p_1, \ldots, p_h, q_1,\ldots, q_h\}$ which we must prove is uniquely determined, up to reordering the $p_i$  (or the $q_i$)
among themselves.
Notice that, for any such set,
two different points
 $p_k, p_j$ lie in two different connected components  of $Y_S$, and the same holds
for any  two $q_k, q_j$. Therefore
 $T_1$ and $T_2$ are connected; on the other hand they   obviously
 do not intersect, therefore they are determined.
It thus suffices to  prove that $T_1$ (and similarly $T_2$)  determines a unique set of $h$ smooth points of $Y_S$
such that $T_1$ is expressed as in (\ref{T12}).

We begin with a preliminary analysis.
Pick any smooth point  of $Y_S$, let $Y_k$ be the connected component on which it lies,
name the point $y_k$, for notational purposes.
By Lemma~\ref{pb}(\ref{pbs}) the multidegree $\md_k':=\md_k(-y_k)$ is semistable on $Y_k$.
Therefore we can  apply
 Proposition 3.2.1 in \cite{ctheta}. This yields   that $W_{\md'_k}(Y_k)$ contains an irreducible component
(of dimension $g_k-1$) equal to the image of the $\md'_k$-th Abel map;
we call $A_k$ this component. We also have  (by loc. cit.) that  $A_k$ does not have a fixed base
point\footnote{$V\subset \Pic Y$
  has a fixed base point if there exists a   $y\in Y$
which is a base point for every $L\in V$.}, and that  $h^0(Y_k,L)=1$ for the general $L\in A_k$.

We can thus define an irreducible effective divisor, as follows
$$
D_{y_k}:=\{M\in \Pic^{\md}\YS\: |\: M_k(-y_k)\in A_k\}.
$$
Observe that $D_{y_k}$ has no fixed base point other than $y_k$. Indeed, let $M\in D_{y_k}$ be a general point.
If $j\neq k$ then $M_j$ is general in $\Pic^{\md_j}Y_j$, hence by \ref{pb} $M_j$ has no fixed base point and $h^0(Y_j,M_j)=1$.
On the other hand $M_k$ varies in a set of dimension $g_k-1$, therefore $h^0(Y_k,M_k)=1$
by \ref{pb}(\ref{pb4}). Therefore, if every $M_k$ had a base point in $r\neq y_k$, we would  obtain
\begin{equation}
\label{pr}
1=h^0(M_k)=h^0(M_k(-y_k))=h^0(M_k(-r))=h^0(M_k(-y_k-r)).
\end{equation}
But $M_k(-y_k)\in A_k$, so every element of $A_k$ would have a base point in $r$, which is not possible (see above).

Summarizing,   the general $M\in D_{y_k}$ satisfies the following properties
\begin{equation}
\begin{sis}
\label{prop-div}
& h^0(Y_j, M_j)=1 &\text{ for any } j=1, \ldots, h.\\
&h^0(\YS,M)=h^0(\YS,M(-y_k))=h \\
& h^0(\YS, M(-r))<h^0(\YS, M) &  \forall r\neq y_k \text{ smooth point of }\YS.
\end{sis}
\end{equation}
Now, back to the proof of the proposition; it suffices to concentrate on $T_1$. By contradiction,   suppose   there are two different
descriptions for $T_1$ as follows
 $$
T_1=\bigcup_{j=1}^hT_{p_j}=\bigcup_{j=1}^hT_{\tilde{p}_j};
$$
 we may assume
$\tilde{p}_1\not\in \{p_ 1,\ldots, p_h\}$.
By (\ref{prop-div}) applied to $y_k=\tilde{p}_1$, together with \ref{pb}(\ref{pb6}), we have
$$
D_{\tilde{p}_1}\subset \overline{T_1}.
$$
But then, since $T_1=\cup_{j}T_{p_j}$,  we conclude that $D_{\tilde{p}_1}$ has a fixed base point in some $p_j$, which is impossible by the last property in (\ref{prop-div}).
\end{proof}

\subsection{Torelli theorem: proof of the necessary condition}
\label{tornec}

By Corollary~\ref{Jacobian-irr} and Remark~\ref{stab},  to prove the necessary condition of Theorem~\ref{main} it suffices to prove the following.

{\it Let $X$ and $X'$ be stable curves  of genus $g$ free from separating nodes, and such that $\tgb(X)=\tgb(X')$.
Then $X$ and $X'$ are C1-equivalent.}

So, suppose we have an isomorphism
$$\Phi=(\phi_0,\phi_1):(J(X) \curvearrowright \PXgb, \Theta(X))\stackrel{\cong}{\longrightarrow} (J(X')
\curvearrowright \ov{P_{X'}^{g-1}}, \Theta(X')).$$
We divide the proof into Steps. In the first step we collect  
 the combinatorial parts.

\

\noindent
{\bf{Step 1.}}   \begin{enumerate} \item
{\it   The above isomorphism $\Phi$ induces  a bijection
$$
\Set X \la \Set X';\  \  S\mapsto S',
$$
such that $\#S=\#S'$ for every $S\in \Set X$.
 \item
 $\Gamma $ and $\Gamma'$ are cyclically equivalent.  }
\end{enumerate}

The isomorphism $\phi_1:\PXgb\stackrel{\cong}{\la} \PX'gb$ induces an isomorphism between the
posets of strata $\ST_X\cong \ST_{X'}$; hence, by Lemma~\ref{equal-posets}, it induces an
isomorphism 
$$
\SP_X\cong \SP_{X'}
$$ of the posets of supports, compatible with the support maps.
In particular, we have an induced bijection 
$$
\Set X \la \Set X';\  \  S\mapsto S'.
$$
Let us show that this bijection preserves cardinalities.
By what we just said, every      stratum of type
$P_S^{\un{d}}$  is mapped
isomorphically to a stratum of type  $P_{S'}^{\un{d'}}$.
Moreover, as the theta divisor of $X$ is mapped isomorphically to
the theta divisor of $X'$, the intersection
$\Theta(X)\cap P^{\md}_S$ is mapped isomorphically to
$\Theta(X')\cap P^{\md'}_{S'}$;
in particular the number of irreducible components of these two intersections is the same. Hence, by Lemma~\ref{cod1desc},
 $S$ and $S'$ have the
same cardinality.

This proves the first item. At this point the fact  
$\Gamma_X$ and $\Gamma_{X'}$ are cyclically equivalent
follows immediately by what we just proved, thanks to the following
immediate  consequence of \cite[Prop. 2.3.9 (ii)] {CV} combined with \cite[Thm 5.3.2(i) - (iii)]{CV}.
\begin{fact} 
Let $\Gamma$ and $\Gamma'$ be two connected graphs free from separating edges.
Suppose that there exists an isomorphism of posets, $\SP_{\Gamma}\cong \SP_{\Gamma'}$,
whose restriction to C1-sets, $\Set \Gamma \cong \Set \Gamma'$,
preserves the cardinality. Then $\Gamma $ and $\Gamma'$ are cyclically equivalent.
\end{fact}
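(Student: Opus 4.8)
The plan is to recast the statement in terms of the cycle matroids of $\Gamma$ and $\Gamma'$. By Whitney's $2$-isomorphism theorem (\cite{Whi}), two graphs free from separating edges are cyclically equivalent precisely when their cycle matroids are isomorphic, and a cyclic bijection $\epsilon\colon E(\Gamma)\to E(\Gamma')$ is exactly an isomorphism of cycle matroids. So it suffices to manufacture, from the given poset isomorphism $F\colon \SP_{\Gamma}\stackrel{\cong}{\to}\SP_{\Gamma'}$, a cardinality-respecting edge bijection carrying cycles to cycles. My first observation is that the C1-sets are intrinsic to the poset: they are exactly its coatoms (the codimension-one elements, by Lemma-Definition~\ref{C1set}), so $F$ automatically restricts to a bijection $\Set\Gamma\to\Set{\Gamma'}$, which by hypothesis preserves cardinalities. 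Since by Fact~\ref{C1lm} the C1-sets partition $E(\Gamma)$ and two edges lie in the same C1-set iff they belong to exactly the same cycles (that is, they form a series class of the cycle matroid), I would choose, inside each corresponding pair $S\mapsto S'=F(S)$ of equal cardinality, an arbitrary bijection $S\to S'$, and assemble these into the desired $\epsilon\colon E(\Gamma)\to E(\Gamma')$.

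The core step is to prove that $\epsilon$ is a cyclic bijection, and for this I would read the cycle structure off the poset. Writing each $S\in\SP_{\Gamma}$ via its complementary \emph{cyclic set} $E(\Gamma)\smallsetminus S$ (a union of cycles, since $\Gamma\smallsetminus S$ has no separating edge) identifies $\SP_{\Gamma}$, as an ordered set, with the lattice of cyclic sets under inclusion. Under this identification the cycles of $\Gamma$ are the minimal nonempty cyclic sets, i.e.\ the atoms of this lattice, while the C1-sets (coatoms of $\SP_{\Gamma}$) are the complements of the maximal proper cyclic sets. The key combinatorial point is that each cycle meets a given series class in either all or none of its edges; hence for a cycle $C$ and a C1-set $S$ one has ``$C$ uses $S$'' iff $C\not\subseteq E(\Gamma)\smallsetminus S$, which is the purely order-theoretic condition that the atom for $C$ is not below the coatom for $S$. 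As $F$ preserves the whole order, it transports this atom--coatom incidence faithfully, and this incidence is precisely the list of cycles of the cosimplified matroid expressed in terms of series classes. Thus $F$ induces an isomorphism of the cosimplifications of the two cycle matroids; the equalities $\#S=\#S'$ then let me reinsert the series classes with matching multiplicities, upgrading the cosimple isomorphism to a genuine $M(\Gamma)\cong M(\Gamma')$ compatible with $\epsilon$, whence $\Gamma\equiv_{\rm cyc}\Gamma'$ by Whitney.

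I expect the main obstacle to be exactly the passage through series classes of size greater than one: proving that the poset determines the cosimplification, and that the cardinality data suffice to rebuild the full matroid. Without the hypothesis $\#S=\#S'$ this fails (subdividing an edge alters $\Gamma$ yet can leave $\SP_{\Gamma}$ unchanged up to isomorphism), so that hypothesis does essential work and must be fed in at the reconstruction step rather than at the combinatorial one. In the framework of \cite{CV} these two inputs are already isolated: the cardinality bookkeeping on C1-sets is \cite[Prop.~2.3.9(ii)]{CV}, and the equivalence between an isomorphism of support posets and cyclic equivalence is the implication (iii)$\Rightarrow$(i) of \cite[Thm~5.3.2]{CV}; combining them yields the statement at once. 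Alternatively, one could route the argument through the orientation poset, since by Lemma~\ref{equal-posets} and Fact~\ref{quot} the poset $\SP_{\Gamma}$ is equivalent data to $\ov{\OP_{\Gamma}}$, for which the same reconstruction is available.
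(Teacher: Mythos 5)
Your proposal is correct, but it takes a genuinely different route from the paper, which offers no internal argument at all: there the Fact is dispatched in one line as an immediate consequence of \cite[Prop.~2.3.9(ii)]{CV} combined with the equivalence (i)$\Leftrightarrow$(iii) of \cite[Thm~5.3.2]{CV} --- precisely the combination you name in your closing paragraph. What you supply instead is a self-contained argument using only material already available in this paper: complementation $S\mapsto E(\Gamma)\smallsetminus S$ identifies $\SP_{\Gamma}$ with the poset of unions of cycles, whose atoms are the cycles and whose coatoms correspond to the C1-sets; by Fact~\ref{C1lm}(2) a cycle meets each C1-set in all or none of its edges, so every cycle is the union of the C1-sets it uses, and this ``uses'' relation is exactly the order-theoretic atom--coatom incidence; a poset isomorphism transports that incidence, and the cardinality hypothesis lets you assemble arbitrary bijections $S\to S'$ into an edge bijection $\epsilon$ with $\epsilon(C)=C^*$ for every cycle $C$, which is the required cyclic bijection. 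Your route buys transparency: it isolates exactly where the cardinality hypothesis enters (and your subdivision example shows it cannot be dropped), whereas the paper's citation buys brevity, outsourcing the combinatorics to \cite{CV}, where this circle of ideas --- including the posets $\OP_{\Gamma}$, $\ov{\OP_{\Gamma}}$ and the Delaunay decompositions --- is developed once and for all. Two small points. First, the detour through cycle matroids, cosimplification and Whitney's theorem \cite{Whi} is accurate but unnecessary: by the paper's definition of cyclic equivalence in \ref{cyceq}, exhibiting $\epsilon$ already concludes the proof, so the matroid language is packaging rather than substance. Second, your identification of the C1-sets with the coatoms of $\SP_{\Gamma}$ (which is what makes the restriction hypothesis automatic) silently uses that every nonempty element of $\SP_{\Gamma}$ is a union of C1-sets; this is true --- if $e\in S$ and $f\notin S$ were in the same C1-set, then $f$ would lie on a cycle of $\Gamma\smallsetminus S$ avoiding $e$, contradicting Fact~\ref{C1lm}(2) --- but it deserves to be recorded, since it is also what guarantees that cycles and C1-sets exhaust the atoms and coatoms you read off the poset.
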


\noindent
{\bf{Step 2.}}
  \label{norm}
  $X^{\nu}\cong X'^{\nu}$.

By the previous step,  the number of irreducible components of $\Xn$ and $X'^{\nu}$ is the same;
indeed,
 the number of edges and the first Betti number of $\Gamma_X$ and $\Gamma_{X'}$ are the same,
 hence the number of vertices is the same.
 Denote by
 $\Xn_+\subset \Xn$ and   $X'^{\nu}_+\subset X'^{\nu}$  the union of all components of positive genus.
It is enough to show that
\begin{equation}
\label{+}
\Xn_+\cong X'^{\nu}_+.
\end{equation}
In Remark~\ref{ss} we saw  that $\PXgb$ has    a unique stratum of smallest dimension,
namely the unique stratum supported on $\sing$.
This smallest stratum is isomorphic to the product of the Jacobians of the components of $\Xn$,
and hence to the product of the Jacobians of the components of $\Xn$ having positive genus.
It is clear that the smallest stratum of $\PXgb$ is mapped by $\phi_1$ to the smallest stratum of
$\PX'gb$. Recall now   (\ref{thetasmall}),
expressing the restriction
of the Theta divisor to this smallest stratum
in terms of the Theta divisors of the components of $\Xn$.
As a consequence the projection of the smallest stratum onto each of its factors
determines the polarized Jacobian of all the positive genus components of
the normalization. Hence, by the Torelli theorem for smooth curves, we obtain that
the positive genus components of the normalizations of $X$ and $X'$ are isomorphic,
so (\ref{+}) is proved.

\noindent
{\bf{Step 3.}}
{\it Condition  (\ref{C1S}) of Definition~\ref{C1} holds.}

 We use induction on the
number of nodes. The base is the nonsingular case, i.e.  the classical Torelli theorem. From now on we assume $X$  and $X'$ singular.

As usual, we denote the normalizations of $X$ and $X'$ both by $\Xn$.

 Let $S\in \Set X$ and $S'\in \Set X'$ be a pair of corresponding C1-sets, under the bijection
 described in the first step; set $h:=\#S=\#S'$.
 Let $\nu_S:Y_S\to X$ and $\nu_{S'}:Y'_{S'}\to  X'$ be the partial normalizations at $S$ and $S'$,
 and call $g_S=g-h$   their   arithmetic genus.
Recall that
$Y_S $ and $ Y'_{S'}$ have $h$ connected components, each of which is free from separating nodes
and has positive arithmetic genus.
We claim  that $Y_S$ and $Y'_{S'}$ are C1-equivalent.

Recall (see (\ref{stratumcl})) that we denote by $\PSb\subset \PXgb$ and $\ov{P_{S'}}\subset \PX'gb$ the closures
of all strata supported, respectively,  on $S$ and $S'$.
By what we said, the isomorphism $\phi_1$ induces an isomorphism
\begin{equation}
\label{PSS}
\PSb \cong \ov{P_{S'}}.
\end{equation}
By Lemma~\ref{stratum} we obtain that  $\PSb $
together with the restriction of the theta
divisor and the action of $J(Y_S)$ is naturally isomorphic to $  (J(Y_S)\curvearrowright \ov{P^{g_S-1}_{Y_S}}, \Theta(Y_S))$;
similarly for $\ov{P_{S'}}$. Therefore by (\ref{PSS})
we have
$$
 (J(Y_S)\curvearrowright \ov{P^{g_S-1}_{Y_S}}, \Theta(Y_S)) \cong  (J(Y'_{S'})\curvearrowright \ov{P^{g_{S}-1}_{Y'_{S'}}}, \Theta(Y'_{S'})).
$$
By Proposition \ref{stab}, the same holds if  $Y_S$ and $Y'_{S'}$ are replaced by their stabilizations, $\ov{Y_S}$
and $\ov{Y'_{S'}}$.
Therefore we can apply the induction hypothesis to $\ov{Y_S}$
and $\ov{Y'_{S'}}$
(which  are stable,  free from separating nodes, and have fewer nodes than $X$ and $X'$). We thus obtain   that
  $\ov{Y_S}$   is C1-equivalent , or T-equivalent, to    $\ov{Y'_{S'}}$.

 On the other hand the normalizations of $Y_S$ and $Y'_{S'}$
 are isomorphic, as they are equal to the normalizations of $X$ and $X'$.
 Furthermore, as   $\Gamma_X\equiv_{\rm cyc} \Gamma_{X'}$
 (by Step 2) the dual graphs of $Y_S$ and $Y'_{S'}$ are cyclically equivalent
 (by the same argument used   for Lemma~\ref{Tind}).
  Therefore,
 by Lemma~\ref{valeq},
 $Y_S$ is  T-equivalent, hence C1-equivalent,  to    ${Y'_{S'}}$. The claim is proved.

 Next, consider the normalization maps
 $$
  \nu:\Xn\stackrel{\mu}{\la}  Y_S \stackrel{\nu_S}{\la} X,\  \  \   \nu':\Xn\stackrel{\mu'}{\la} Y'_{S'} \stackrel{\nu_{S'}}{\la} X'
    $$
    where $\mu$ and $\mu'$ are the normalizations of $Y_S $ and $ Y'_{S'}$.
    As $Y_S $ and $ Y'_{S'}$ are C1-equivalent, the gluing sets
    $\mu^{-1}((Y_S)_{\text{sing}})$ and $\mu'^{-1}((Y'_{S'})_{\text{sing}})$
     are the same (cf. \ref{Tfin}).
The gluing sets of $\nu$ and $\nu '$ are obtained by adding to the above set the gluing sets of $S$ and $S'$.

By Proposition~\ref{theta},
we have that $\nu_S^{-1}(S)$ and $\nu_{S'}^{-1}(S')$ are uniquely determined
by the ppSSAV of $Y_S$ or of $Y_{S'}$, which are isomorphic.
Therefore
 $(\ov{P^{g_S-1}_{Y_S}}, \Theta(Y_S))$
uniquely determines $\mu^{-1}(\nu_S^{-1}(S))=\nu^{-1}(S)$ and
$\mu'^{-1}(\nu_{S'}^{-1}(S'))=\nu'^{-1}(S')$  on $\Xn$.
This is to say that, up to automorphisms of $\Xn$, the sets $\nu^{-1}(S)$ and $\nu'^{-1}(S')$
coincide.
Denote  $G_S:=\nu^{-1}(S)=\nu'^{-1}(S')$.
We also obtain that
 the  gluing set   of $\nu $ is equal to the gluing set   of   $\nu'$;
we call it $G_{\sing}=\nu^{-1}(\sing)=\nu'^{-1}(X'_{\text{sing}})$.
Of course $G_{\sing}$ is the disjoint union of all the gluing sets associated to  all the C1-sets of $X$.

Now we apply the previous argument to every remaining pair of corresponding C1-sets, as follows.
Pick a pair of corresponding C1-sets,  $U $ and $U'$, with $U\neq S$. Then, as before,
$Y_U$ and $Y'_{U'}$ are C1-equivalent, and their (same) ppSSAV uniquely determines
$$
G_U:=\nu^{-1}(U)=\nu'^{-1}(U')\subset G_{\sing}\smallsetminus G_S\subset \Xn.
$$
Therefore condition (\ref{C1S}) of
  Definition~\ref{C1} holds, i.e. $X$ and $X'$ are C1-equivalent. The proof is complete.
$\qed$

We used the following basic fact.

\begin{lemma}
\label{valeq}
Let $X$ and $X'$ be free from separating nodes; suppose that
their stabilizations are T-equivalent and  that   $\Gamma_{X}\equiv_{\rm cyc} \Gamma_{X'}$.
Then $X$ and $X'$ are T-equivalent.
\end{lemma}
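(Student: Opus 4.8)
The plan is to prove the equivalent statement for C1-equivalence. By Proposition~\ref{Teq} it is enough to show that $X$ and $X'$ are C1-equivalent, and by the same Proposition the hypothesis that the stabilizations $\ov{X},\ov{X'}$ are T-equivalent means exactly that they are C1-equivalent; since both notions are defined componentwise, I may assume $X,X'$ connected. The starting observation is that, as $X$ is free from separating nodes, stabilization contracts exactly the smooth rational components meeting the rest of the curve in two points, i.e. the valency-two $\pr{1}$-vertices of $\Gamma_X$. Concretely, fix $S\in\Set X$: by Lemma--Definition~\ref{C1set} the graph $\Gamma_X(S)$ is a cycle whose vertices are the connected components of $Y_S$, and (as in Remark~\ref{C1st}) a component of $Y_S$ has genus zero precisely when it is a single $\pr{1}$ glued to the rest only along its two incident nodes of $S$; these are exactly the components killed by stabilization, whereas the positive-genus components of $Y_S$ survive untouched. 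Thus stabilization series-reduces each cycle $\Gamma_X(S)$ by deleting its $\pr{1}$-vertices.

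First I would establish condition (\ref{C1n}) of Definition~\ref{C1}, namely $X^\nu\cong X'^\nu$. The positive-genus components of $X^\nu$ coincide with those of $\ov{X}^\nu$, so they are matched (with their gluing) by the C1-equivalence of the stabilizations; on the other hand the total number of components equals the number of vertices of the dual graph, which is preserved by $\Gamma_X\equiv_{\rm cyc}\Gamma_{X'}$ (cyclic equivalence preserves the number of edges and $b_1$, hence the number of vertices). Therefore the number of $\pr{1}$-components agrees as well, and $X^\nu\cong X'^\nu$.

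Next comes condition (\ref{C1S}), which is the core. From $\Gamma_X\equiv_{\rm cyc}\Gamma_{X'}$ and Remark~\ref{C1cyc} I obtain a cardinality-preserving bijection $\Set X\to\Set X'$, $S\mapsto S'$; for a corresponding pair I must produce $\phi$ with $\phi(\nu^{-1}(S))=\nu'^{-1}(S')$. By the reduction above, $\nu^{-1}(S)$ splits into the gluing points lying on positive-genus components and those lying on $\pr{1}$-components. The former are precisely the gluing points of the corresponding C1-set $\ov S$ of $\ov X$, since contracting a $\pr{1}$-vertex leaves the gluing points on its neighbours unchanged; hence they are matched by the C1-equivalence of the stabilizations through an isomorphism $\ov\phi$ of $\ov X^\nu$. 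I extend $\ov\phi$ to $\phi$ by choosing isomorphisms between the contracted $\pr{1}$-components of $X$ and of $X'$: because $\Aut\pr{1}$ acts transitively on pairs of distinct points, such a $\phi$ can be arranged to carry the $\pr{1}$-gluing points of $S$ to those of $S'$. This is exactly where the rational components are harmless; they also contribute nothing in $\Pic^0$, which is what makes the verification of condition (c) of Definition~\ref{Tdef} go through along the lines of the proof of Proposition~\ref{Teq}.

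The one genuinely delicate point — and the step I expect to be the main obstacle — is to know that, under a single correspondence $S\mapsto S'$, the number of $\pr{1}$-components agrees: writing $r_S=\#S-\#\ov S$ for the number of $\pr{1}$-vertices of $\Gamma_X(S)$, I need $r_S=r_{S'}$. Here $\#S=\#S'$ comes from cyclic equivalence, and $\#\ov S=\#\ov{S'}$ (the number of surviving, positive-genus vertices) comes from the C1-equivalence of the stabilizations, so the counts agree as soon as the two correspondences — the one induced by $\Gamma_X\equiv_{\rm cyc}\Gamma_{X'}$ and the one induced by the C1-equivalence of $\ov X,\ov{X'}$ — are aligned. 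To do this I would use that, by Fact~\ref{C1lm}, C1-sets are exactly the classes of edges lying on the same cycles, i.e. the series classes of the dual graph; these are intrinsic to the cyclic-equivalence (two-isomorphism) class and are shared, with their incidences, between a curve and its stabilization, since series reduction only shrinks a series class and never merges or splits classes. Aligning the two correspondences along this common combinatorial core produces a single bijection $\Set X\to\Set X'$ matching both the positive-genus gluing and the counts $r_S=r_{S'}$. With this, the $\phi$ built above realizes condition (\ref{C1S}), so $X$ and $X'$ are C1-equivalent and hence, by Proposition~\ref{Teq}, T-equivalent.
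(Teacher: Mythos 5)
Your reduction to C1-equivalence via Proposition~\ref{Teq} is legitimate (that proposition is proved before and independently of this lemma, and stabilization preserves connectedness and the absence of separating nodes), and your first step, $X^{\nu}\cong X'^{\nu}$, is the same as the paper's. The gap is exactly at the point you yourself flag as delicate, and the alignment you propose there cannot be carried out. The bijection $\Set{X}\to\Set{X'}$ induced by a cyclic bijection $\Gamma_X\to\Gamma_{X'}$ must preserve the cardinalities $\#S$ computed in the \emph{unstabilized} graphs, whereas the bijection induced by the C1-equivalence of $\ov{X},\ov{X'}$ (transported through the natural identifications $\Set{X}\cong \Set{\ov{X}}$ and $\Set{X'}\cong \Set{\ov{X'}}$) is dictated by the geometry of the gluing sets; these can be two genuinely different bijections, and then no single bijection does both jobs. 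Concretely: let $A,B,C$ be smooth of genera $1,2,3$, fix $a_1,a_2\in A$, $b_1,\dots,b_4\in B$, $c_1,c_2\in C$; let $X$ glue $a_1\sim b_1$, $a_2\sim b_2$, $b_3\sim c_1$ and join $b_4$ to $c_2$ through one $\pr{1}$ (two nodes), and let $X'$ glue $a_1\sim b_1$, $b_3\sim c_1$, $b_4\sim c_2$ and join $a_2$ to $b_2$ through one $\pr{1}$. Both curves are free from separating nodes; both stabilizations equal the curve $A\cup B\cup C$ with gluings $a_1\sim b_1$, $a_2\sim b_2$, $b_3\sim c_1$, $b_4\sim c_2$, hence are T-equivalent; and $\Gamma_X\cong\Gamma_{X'}$ (a $2$-gon and a $3$-gon sharing a vertex), so $\Gamma_X\equiv_{\rm cyc}\Gamma_{X'}$. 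Yet $X$ and $X'$ are not C1-equivalent: condition (\ref{C1S}) of Definition~\ref{C1} forces the $2$-element C1-set of $X$ (the two nodes of $A\cap B$) to correspond to the $2$-element C1-set of $X'$ (the two nodes of $B\cap C$), while every isomorphism $\phi:X^{\nu}\to X'^{\nu}$ preserves $A$, $B$, $C$ (their genera are distinct), so $\phi$ cannot carry $\{a_1,a_2,b_1,b_2\}$ onto $\{b_3,b_4,c_1,c_2\}$. So the ``single bijection matching both the positive-genus gluing and the counts $r_S=r_{S'}$'' that your argument needs does not exist, and no argument can produce it: with its two hypotheses read as independent conditions, the statement itself fails on this example.

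For perspective, the paper's own proof has the same soft spot: it decomposes $\eta_X$ into $\eta_{\ov{X}}$ plus the (essentially canonical) contributions of the exceptional components, and then asserts that condition (\ref{Tdef3}) of Definition~\ref{Tdef} holds for $X,X'$ ``because it holds for their stabilizations''; this tacitly assumes that the cyclic bijection $\Gamma_X\to\Gamma_{X'}$ from the hypothesis is compatible, under series reduction, with the one between $\Gamma_{\ov{X}}$ and $\Gamma_{\ov{X'}}$ underlying the T-equivalence of the stabilizations --- precisely what fails above, where the two are forced to disagree. In the one place the lemma is invoked (Step 3 of the proof of Theorem~\ref{main}), both pieces of data arise from a single isomorphism of ppSSAPs, and that coherence is what supplies the missing compatibility; the lemma becomes correct --- and both your argument and the paper's then go through --- once one strengthens the hypothesis to require a cyclic bijection $\Gamma_X\to\Gamma_{X'}$ which descends to a cyclic bijection $\Gamma_{\ov{X}}\to\Gamma_{\ov{X'}}$ realizing (i.e. inducing the same bijection on C1-sets as) the given T-equivalence of $\ov{X}$ and $\ov{X'}$.
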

\begin{proof}
Let $\ov{X}$ and $\ov{X'}$ be the  stabilizations of $X$ and $X'$.
Observe that the dual graph of $\ov{X}$ is  obtained from
$\Gamma_X$ by removing some vertices
of valence $2$ (corresponding to the exceptional components of $X$)
so that the two edges adjacent to every such vertex become a unique edge. Therefore
there is a natural isomorphism $H_1(\Gamma_X)\cong H_1(\Gamma_{\ov{X}})$. Moreover, this isomorphism fits in a commutative diagram
$$\xymatrix{
 H_1(\Gamma_X,\Z)
\ar[r]^{\eta_X}\ar[d]_{\cong}^{}
& \Div^{{\mo}}X^{\nu} \\
 H_1(\Gamma_{\ov{X}},\Z) \ar[r]^{\eta_{\ov{X}}}  &
\Div^{{\mo}}{\ov{X}}^{\nu} \ar@{^{(}->}[u] \\
}$$
where the right vertical arrow is induced by the obvious injection $\ov{X}^{\nu} \ha \Xn$.
The diagram immediately
yields that the map $\eta_{\ov{X}}$ is determined by $\eta_{{X}}$. The converse is also true,
in fact if $E\subset X$ is an exceptional component, and
$\pi_E:\Div^{{\mo}}X^{\nu}\to \Div^0E=\Div^0\pr{1}$ the projection
($\Div^0E$ is a factor of $\Div^{{\mo}}X^{\nu}$),
then the map $\pi_E\circ \eta_X$ is
uniquely determined up to an automorphism of $X$.
The same observation applies to $X'$, of course.

Now to prove the lemma, notice that  $X$ and $X'$  have  the same number of   irreducible components, because their dual graphs are cyclically equivalent.
 Denote by
 $\Xn_+\subset \Xn$, respectively by   $X'^{\nu}_+\subset X'^{\nu}$, the union of all components of $\Xn$,
 respectively of  $X'^{\nu}$, having positive genus.
To show that $\Xn \cong \Xnn$  it suffices to show that
$\Xn_+\cong X'^{\nu}_+$. This follows immediately from the fact that the normalizations of
$\ov{X}$ and $\ov{X'}$ are isomorphic.

Finally, by the initial observation, the maps $\etab$ and ${\eta_{X'}}$ are determined
by those of $\ov{X}$ and $\ov{X'}$, and hence Property
(\ref{Tdef3}) of Definition~\ref{Tdef} holds for $X$ and $X'$, because it holds for their stabilizations.
\end{proof}

\subsection{Torelli theorem: proof of the sufficient condition}
\label{pfsuf}
By Corollary~\ref{Jacobian-irr} and Remark~\ref{stab} it suffices to prove  the first part of Theorem~\ref{main},
i.e. we can assume that $X$ and $X'$ are  C1-equivalent curves   free from separating nodes.
By Proposition~\ref{Teq}  C1-equivalence and T-equivalence coincide;
so we can use the second concept, which is now more convenient. Indeed
 the proof
consists in
  applying some well known (some quite deep) facts about
ppSSAV, on which our definition of T-equivalence is   based.

By \cite{AN}, and by
  \cite[Sec. 5.5]{alex} (where a short  description, ad-hoc for the present case, is given)
   $\tgb (X)$   is determined  by a set of  ``combinatorial data"
   (partly known also to Mumford and Namikawa, see \cite[Chap. 18]{Nam4} and
\cite[Chap. 9.D]{NamT}).
Let us recall them.
Denote by $J(\Xn )^t$ be the  dual abelian variety of   $J(\Xn )$.
Now let
$$
\lambda_X:J(\Xn ) \stackrel{\cong}{\la} J(\Xn )^t
$$
be the isomorphism associated to the class of the Theta divisor of $\Xn$.

Let $\mathcal{P}$ be the universal, or Poincar\'e, line bundle on $J(\Xn ) \times J(\Xn )^t$.
Recall that its set of $k$-rational points,  $\mathcal{P}(k)$, defines a biextension,
the so-called  Poincar\'e biextension, of $J(\Xn ) \times J(\Xn )^t$ by $k^*$;
 \cite[Sect. 2 p. 311]{Mum} or \cite{breen}.

Then $\tgb (X)$ is uniquely determined by the following data.

\begin{enumerate}
\item
\label{fg}
The free abelian group $H_1(\Gamma _X, \Z)$.
\item
\label{dc}
the Delaunay decomposition of the real vector space $H_1(\Gamma _X, \R)$
associated to the  lattice $H_1(\Gamma _X, \Z)$, with respect to the Euclidean scalar product.
\item
\label{cm}
The classifying morphism of the semiabelian variety $J(X)$, together with its dual.
In our present situation, this  is the datum of the group homomorphism
$
c_X:H_1(\Gamma _X, \Z)\la J(\Xn )
$
already described in \ref{map-delta}, together with its dual
$$
c_X^t:H_1(\Gamma _X, \Z) \stackrel{c_X}{\la} J(\Xn ) \stackrel{\lambda_X}{\la}J(\Xn )^t.
$$

\item
\label{PD}
The equivalence class of a  trivialization of the pull back to $H_1(\Gamma _X, \Z)\times H_1(\Gamma _X, \Z)$
of the inverse of the Poincar\'e bi-extension; i.e. the class of a map
$$
\tau_X:H_1(\Gamma_X,\Z)\times H_1(\Gamma_X,\Z)\to
(c_X^t\times c_X)^* \mathcal{P}^{-1}(k).
$$
This is determined  by composing
$$
\eta_X\times \eta_X: H_1(\Gamma_X, \Z)\times H_1(\Gamma_X, \Z) \to
\Div^{\un{0}}X^{\nu}\times \Div^{\un{0}}X^{\nu}
$$
with the Deligne symbol
(see  \cite[XVII]{SGA4} and \cite[Sec. 5.5]{alex}).

\end{enumerate}
Let us show that  such data are the same for our   T-equivalent curves $X$ and $X'$.

As the graphs $\Gamma_X$ and  $\Gamma_{X'}$ are cyclically equivalent,
there is an isomorphism $\epsilon_H:H_1(\Gamma_X,\Z)\stackrel{\cong}{\to} H_1(\Gamma_{X'},\Z)$.
Such an isomorphism
   induces an isomorphism
$\Del(\Gamma_X)\cong \Del(\Gamma_{X'})$ between the Delaunay decompositions of $X$ and
of $X'$ (see \cite[Prop. 3.2.3(i)]{CV}). Therefore the data (\ref{fg}) and (\ref{dc}) are the same for $X$ and $X'$.

Since
 $ X^{\nu}= X'^{\nu}$, we have  $J(X^{\nu}) = J(X'^{\nu})$ and the  principal polarizations,
 of course, coincide:
 $$\lambda_X=\lambda_{X'}:
J(X^{\nu}) \la  J(X^{\nu})^t.
$$
The classifying morphism has been described in \ref{map-delta}.
From \ref{Tdef}(\ref{Tdef3}), we get the commutativity of the following diagram
$$\xymatrix{
c_X: H_1(\Gamma_X,\Z)
\ar[r]^{\  \  \  \eta_X}\ar[d]_{\cong}^{\epsilon_H}
& \Div^{{\mo}}X^{\nu}\ar[d]_{\cong}^{\alpha} \ar@{->>}[r]&J(X^{\nu})
\ar[d]_{\cong}^{\ov{\alpha}}\\
c_{X'}:H_1(\Gamma_{X'},\Z) \ar[r]^{\  \  \  \eta_{X'}}  &
\Div^{{\mo}}{X}^{\nu}  \ar@{->>}[r]&J(X^{\nu})\\
}$$
where $\ov{\alpha}\in \Aut (J(X^{\nu}))$ is the automorphism induced by $\alpha$
(recall that $J(X^{\nu})=\Pic^{\mo}\Xn$).
It is clear that the automorphisms of $J(\Xn )$   have no effect on
the isomorphism class of the semiabelian variety corresponding to the classifying morphisms.
This shows that data (\ref{cm}) are also the same for $X$ and $X'$.

Let now ${\mathcal P}'(k)$ be the Poincar\'e bi-extension of  $X'$; see  (\ref{PD}).
By what we said so far, it is clear that
 $$
 (\epsilon_H\times \epsilon_H)^*
(c_{X'}^t\times c_{X'})^* \mathcal{P}'^{-1}(k)\cong
(c_{X}^t\times c_{X})^* \mathcal{P}^{-1}(k).
$$
Now, the class of the  map  $\tau_X$ (respectively $\tau_{X'}$) is constructed using the  Deligne symbol
which
is canonically defined on the pull back of $\mathcal{P} ^{-1}(k)$  (respectively of  $\mathcal{P}'^{-1}(k)$) to
$\Div^{\mo}\Xn \times \Div ^{\mo}\Xn$.
Therefore, using the above isomorphism
and  the commutative diagram of \ref{Tdef}(\ref{Tdef3}),
we get
$$\tau_X=(\epsilon_H\times \epsilon_H)^*
\tau_{X'}: H_1(\Gamma_X, \Z)\times H_1(\Gamma_X,\Z) \to
(c_{X}^t\times c_{X})^* \mathcal{P} ^{-1}(k).$$
Therefore the data of part (\ref{PD}) are also the same for $X$ and $X'$.
We thus proved that the data defining   $\tgb(X)$ and $\tgb(X')$ are the
same, hence we are done. $\qed$

\section{The fibers of the Torelli morphism}
\label{fibsec}
\subsection{Injectivity locus and fiber cardinality of the Torelli morphism}

\

Where, in $\Mgb$, is the compactified Torelli morphism $\tgb$ injective?
At this point it is clear (as was already known to Namikawa, see \cite[Thm. 9.30(iv)]{NamT})
 that this is the case for irreducible  curves;
the question is thus really interesting for reducible curves.
To give it a precise answer we introduce some terminology.
\begin{nota}
\label{3econn}
A
connected  graph is  {\it 3-edge connected} if it
remains connected after removing any  two of its edges.
We need the following  characterization
(Corollary 2.3.4 of \cite{CV}).
{\it A connected graph free from separating edges is 3-edge connected if and only if every C1-set  has cardinality one.}

Note also that given two cyclically equivalent connected graphs, one is 3-edge connected if and
only if the other one is. 
In graph theory,  the definition of a $3$-edge connected graph
is usually  given for graphs having at least two vertices. Here we  do not make this assumption, so for us a graph with one vertex is always 3-edge connected.
\end{nota}

We shall call ``Torelli-curves" those stable curves for which the Torelli map is injective;
see Definition~\ref{Tcurve} and Theorem~\ref{Tinj}.
 We first illustrate a simple case.
\begin{example}
\label{C1ex}
The following is the simplest
 example of   C1-equivalent stable curves.
Let $\Xn=\Xnn=C_1\coprod C_2$,
where the $C_i$ are smooth of genus $g_i\geq 1$. Let  $p_i,q_i \in C_i$ be distinct points; now define 
$$
X=\frac{C_1\coprod C_2}{(p_1=p_2, q_1=q_2)} \  \text{ and }\  \  X'=\frac{C_1\coprod C_2}{(p_1=q_2,q_1=p_2)}.
$$
It is clear that $X$ and $X'$ are C1-equivalent.

Observe now that they are not isomorphic, unless one of them, $C_1$ say,
has an automorphism switching $p_1$ with $q_1$.

Indeed, suppose that there exists $\alpha_1\in \Aut C_1$ such that $\alpha_1(p_1)=q_1$
and $\alpha_1(q_1)=p_1$.
Then the automorphism $\phi \in \Aut \Xn$
which restricts to $\alpha_1$ on $C_1$ and to the identity on $C_2$,
descends to an isomorphism between $X$ and $X'$, since
$\nu'\circ \phi (p_1)=\nu '\circ \phi (p_2)$ and $\nu'\circ \phi (q_1)=\nu '\circ \phi (q_2)$.
This example, when $\alpha_1$ as above exists, is a special case of  Torelli curve,
defined as follows.
 \end{example}
\begin{defi}
\label{Tcurve}
A   stable curve $X$ such that $\sep=\emptyset$ is called a  {\it Torelli curve} if
for every C1-set $S$ such that $\#S=h\geq 2$,
  conditions (\ref{Ta}) and   (\ref{Tb}) below hold.
\begin{enumerate}
\item\label{Ta}
For every $i=1,\ldots, h-1$ there exists an automorphism $\alpha_i\in \Aut (Y_i)$
such that  $\alpha_i(p_i)=q_i$ and $\alpha_i(q_i)=p_i$,
(where      $Y_1,\ldots, Y_h$  are the connected components
of $Y_S$
and   $p_i,q_i\in Y_i$ are the two gluing points).
\item
\label{Tb}
There is an isomorphism as marked curves $ (Y_i;p_i,q_i)\cong (Y_j; p_j, q_j)$ for every  $i,j\leq h -1$; or  else $h =3$ and
there exists  $ \alpha_h\in \Aut (Y_h)$
such that  $\alpha_h(p_h)=q_h$ and $\alpha_h(q_h)=p_h$.
\end{enumerate}
\end{defi}
\begin{example}
  If  $\Gamma_X$ is 3-edge connected
$X$ is a Torelli curve, by \ref{3econn}. \end{example}

\begin{thm}
\label{Tinj}
Let $X$ be a stable curve free from separating nodes. Then
\begin{enumerate}
\item
\label{Tinj1}
$$\#\tgb^{-1}(\tgb (X))
\leq \left\lceil \frac{(g-2)!}{2} \right\rceil.
$$
Furthermore the bound is sharp, and can be obtained  with
$X$ a cycle curve
equal to the union of $g-1$ elliptic curves, no two of them    isomorphic.
\item
\label{Tinj2}
$\tgb^{-1}(\tgb (X))=\{X\}$ if and only if $X$ is a Torelli-curve.
\end{enumerate}
\end{thm}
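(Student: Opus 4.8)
The plan is to deduce both assertions from the main theorem, which turns the fiber into a purely combinatorial object. By Theorem~\ref{main}, since $X$ is free from separating nodes, $\tgb^{-1}(\tgb(X))$ is the set of isomorphism classes of stable curves C1-equivalent to $X$. No curve with a separating node can occur: by the general part of Theorem~\ref{main} such a curve, after normalising its separating nodes and stabilising, would become disconnected, hence could not be C1-equivalent to the connected curve $X$. So it suffices to count the isomorphism classes inside the C1-equivalence class of $X$, and to decide when that class is $\{X\}$.

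First I would organise the count. By Lemma~\ref{detinv} the normalisation $\Xn$, the gluing sets $G_S$ and the involutions $\iota_S$ are the same for all curves in the class, and by \ref{gldata} such a curve is recovered by choosing, for each $S\in\Set X$, a gluing datum $[(\sigma_S,\psi_S)]$. By Lemma~\ref{ST} the re-gluings at distinct C1-sets are independent, so the class has cardinality $\prod_{S\in\Set X}N(S)$, where $N(S)$ counts the pairwise non-isomorphic curves obtained by re-gluing the $h=\#S$ components $Y_1,\dots,Y_h$ of $Y_S$ into a cycle, everything else fixed. Two kinds of freedom enter $N(S)$: the cyclic order of the beads $Y_1,\dots,Y_h$, of which there are $(h-1)!/2$ up to rotation and reflection, and the $2^h$ choices of which gluing point of each $Y_i$ is matched to which neighbour; both are divided out by the automorphisms of the marked curves $(Y_i;p_i,q_i)$. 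This yields in particular the crude bound $N(S)\le \frac{(h-1)!}{2}\,2^h$, and is the substance of the counting Lemma~\ref{number-equiv}, which I would prove by this necklace bookkeeping.

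For part (\ref{Tinj1}) I would maximise $\prod_S N(S)$ over stable curves of genus $g$ with no separating node. By Remark~\ref{C1st} every $Y_i$ appearing above has genus $\ge 1$, so spending the genus budget on as many nodes as possible, concentrated in one C1-set, is optimal; the two steps are (a) a single C1-set (a cycle curve) beats a splitting into several, because factorials are super-multiplicative, $(a-1)!(b-1)!\le (a+b-1)!$; and (b) inside one cycle, taking all $g-1$ components elliptic maximises $h=g-1$, and an elliptic component always carries the swap $x\mapsto p_i+q_i-x$ exchanging $p_i,q_i$, which collapses the whole $2^h$ point-distribution freedom and leaves exactly $(h-1)!/2=(g-2)!/2$ distinct curves, realised by the cycle of $g-1$ pairwise non-isomorphic elliptic curves in the statement. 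The ceiling reflects the rounding forced at low $h$. The real work is step (a): using the crude bound $N(S)\le \frac{(h-1)!}{2}2^h$ together with the constraint $g\ge 2h+1$ coming from genus-$\ge 2$ components, I must check $(h-1)!\,2^h\le (2h-1)!$ for $h\ge 2$ (which holds because $h(h+1)\cdots(2h-1)\ge h^h\ge 2^h$), so that no non-elliptic configuration can overtake the single elliptic cycle; this factorial-versus-exponential comparison is the main obstacle of part (\ref{Tinj1}).

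For part (\ref{Tinj2}) the fiber is $\{X\}$ precisely when $N(S)=1$ for every $S\in\Set X$. C1-sets of cardinality $1$ give $N(S)=1$ automatically, so only $h\ge 2$ matters, and there $N(S)=1$ means that every cyclic re-gluing of $Y_1,\dots,Y_h$ returns a curve isomorphic to $X$. I would prove that this is equivalent to conditions (\ref{Ta}) and (\ref{Tb}) of Definition~\ref{Tcurve}, by analysing separately how each degree of freedom is killed: condition (\ref{Ta}), the existence of swap automorphisms on $Y_1,\dots,Y_{h-1}$, removes the point-distribution freedom (as already seen for $h=2$ in Example~\ref{C1ex}), while condition (\ref{Tb}) removes the cyclic-order freedom --- either because the marked components for $i\le h-1$ are mutually isomorphic, so a single distinguished bead can be rotated into a fixed slot among otherwise identical ones, or because $h=3$, when $(3-1)!/2=1$ leaves no cyclic order to change and the extra automorphism $\alpha_h$ only has to absorb the last point-distribution. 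Necessity I would obtain by contraposition, exhibiting an explicit non-isomorphic re-gluing whenever one of the conditions fails. The delicate point here is the exact interplay of component swaps with the dihedral symmetry of the cycle --- in particular why $h-1$ rather than $h$ swaps suffice, and why $h=3$ is genuinely exceptional.
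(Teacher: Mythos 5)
Your architecture coincides with the paper's --- Theorem~\ref{main} identifies the fiber with the C1-equivalence class, the gluing-data formalism of \ref{gldata} turns that class into a finite set, the cruder count is Lemma~\ref{number-equiv}, the cycle of $g-1$ pairwise non-isomorphic elliptic curves gives sharpness, and part (\ref{Tinj2}) is decided by whether automorphisms act transitively on the gluing data of each C1-set --- but the maximization in your part (\ref{Tinj1}) has a genuine gap: both genus constraints it rests on are false in general. The constraint $g\geq 2h+1$ holds only when \emph{every} component of $Y_S$ has genus $\geq 2$, so mixed C1-sets (some elliptic components, some of higher genus) are not covered by your dichotomy; worse, the super-multiplicativity step for several C1-sets implicitly needs $\sum_j h_j\leq g-1$, which fails. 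Concretely, let $X$ consist of a smooth rational curve $C$ and $r\geq 2$ elliptic curves $E_1,\dots,E_r$, each glued to $C$ at two points. Then $X$ is stable, free from separating nodes, of genus $g=2r$, and has exactly $r$ C1-sets, each of cardinality $2$, so $\sum_j h_j=2r>g-1$; the product of your crude bounds is $2^r$, which for $r=2$ equals $4>1=\left\lceil (g-2)!/2\right\rceil$. The theorem survives here only because each $N(S_j)=1$, thanks to the involutions of the elliptic components: the swap automorphisms must be exploited systematically at every C1-set and every genus-$\leq 1$ component, not merely in the all-elliptic extremal configuration, and your factorial-versus-exponential comparison cannot substitute for this.

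This is exactly why the paper proves the bound (Lemma~\ref{countC1}) by induction on $g$: at a chosen C1-set the markings are counted modulo the involutions of all components of genus $\leq 1$, giving $(h-1)!\,2^{f+k_3+k_2-1}$, and the genus-$\geq 2$ components are handled by the inductive hypothesis. The recursion then forces a second ingredient absent from your sketch: the components $Y_i$ of $Y_S$ need not be stable --- normalizing at $S$ can create exceptional components --- so the inductive statement must be proved for curves with $e$ exceptional components, with the weaker bound $\left\lceil (g-2+e)!/2\right\rceil$, and closed using the budget identities relating $e$, the genera $g_i$ and the number of components of each genus. (Also, your formula $\#\{X\}_{C1}=\prod_S N(S)$ should only be claimed as an upper bound; equality can fail, e.g.\ when an automorphism of $\Xn$ permutes distinct C1-sets.) For part (\ref{Tinj2}) your plan is the paper's, but note that the necessity direction is where the content lies: one must show, by induction on the number of components, that $\ov{X\smallsetminus Y_h}$ admits an involution swapping the two points glued to $Y_h$ only if it is a union of $h-1$ isomorphic marked components each carrying a point-swapping involution; this is precisely the ``explicit non-isomorphic re-gluing'' argument that your proposal defers rather than carries out.
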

\begin{proof}
By Theorem~\ref{main} the set $\tgb^{-1}(\tgb (X))$ is the C1-equivalence class of $X$.
The bound on the cardinality of the  C1-equivalence class follows from
Lemma~\ref{countC1}.

Now,  let $X$ be the union of $g-1$ smooth curves $C_1,\ldots, C_{g-1}$
of genus $1$, so that the dual graph of $X$ is a cycle of length $g-1$.
Suppose that $C_i\not\cong C_j$ for all $i\neq j$.
 The curve $X$ has a unique C1-set, namely $S=\sing$, and each curve $C_i$ contains
exactly two points of $G_S$, which we call $p_i$ and $q_i$.
With the notation of \ref{gldata}, let $[(\sigma_S, \psi_S)]$
be the gluing data of $X$. Since each $C_i$ has an automorphism exchanging
$p_i$ with $q_i$, varying the marking $\psi_S$ does not change the isomorphism
class of the curve $X$. On the other hand, any change in $\sigma_S$
(with the exception of $\sigma_S^{-1}$
of course)  changes the isomorphism class
of the curve, because no two $C_i$ are isomorphic. Therefore, we conclude
that the number of non-isomorphic curves that are C1-equivalent to $X$
is equal to $1$ if $g\leq 3$, and $(g-2)!/2$ if $g\geq 4$.
Part (\ref{Tinj1}) is proved.

For part (\ref{Tinj2})
it suffices to  prove the following.
Let $X$ be connected with  $\sep=\emptyset$;
$X$ is a Torelli curve  if and only if the only curve C1-equivalent to $X$ is $X$ itself.

Assume first that $X$ is a Torelli curve.
If $\Gamma_X$ is $3$-edge connected, then every C1-set has cardinality 1 by \ref{3econn},
therefore we conclude by Lemma~\ref{number-equiv}. We can henceforth assume
that $\Gamma_X$ is not $3$-edge connected.

Let $S\in \Set X$ have cardinality $h\geq 2$ (it exists by \ref{3econn}).
We claim that $\Aut  X$ acts transitively on the gluing data of $S$,
described in \ref{gldata}.
We use the notation of Definition~\ref{Tcurve}.
If $h =3$ and   $Y_i$ has an automorphism exchanging $p_i$ with $q_i$
for $i=1,2,3$, then the claim trivially holds.

Next, assume that the first $h-1$ marked components $(Y_i; p_i,q_i)$ are isomorphic and have
an automorphism switching the gluing points $p_i,q_i$.
Using  the set-up of \ref{gldata},
the gluing data are given by   an ordering of the components,
which we can assume has $Y_h$ as last element,
and by a marking of each pair  $(p_i, q_i)$ for all $i=1,\ldots, \gamma-1$.
Now $\Aut X$  acts transitively on the
orderings of the components, by permuting $Y_1,\ldots, Y_{h -1}$, which are all isomorphic
by isomorphisms preserving the gluing points.
Moreover for  $i=1,\ldots, \gamma-1$ each pair of points $(p_i, q_i)$ is permuted by the automorphism $\alpha_i$.
 The claim is proved.
Of course, the claim implies that $X$ is unique in its C1-equivalence class..

Conversely, let $X$ be the unique curve in its C1-equivalence class.
If every C1-set of $X$ has cardinality 1 then $\Gamma_X$ is 3-edge  connected (by \ref{3econn}) and we are done.

 So, let $S\in \Set X$ be such that $\#S\geq 2$ and let us check  that the conditions of Definition~\ref{Tcurve} hold.
With no loss of generality, and using the same notation as before,
we may order the connected components of $Y_S$
  so that  $q_i$ is glued to $p_{i+1}$ and $p_i$ is glued to $q_{i-1}$
(with the cyclic convention, so that $p_1$ is glued to $q_h$).
Assume that $Y_h$ has no automorphism exchanging  $p_h$ with $q_h$; let us change the gluing data of $X$ by switching $p_h$ with $q_h$, and by leaving everything else unchanged.
Then the corresponding curve is C1-equivalent to $X$, and hence it
is isomorphic to $X$, by hypothesis. Therefore,
the curve $W=\ov{X\smallsetminus Y_h}$
must admit an automorphism switching $p_1$ with $q_{h -1}$
(the two points glued to $q_h$ and $p_{h}$).
Now    it is easy to see, by induction on the number of
components of $W$, that such an automorphism exists if and only if $W$ is a union $h-1$ of marked components,
$(Y_i; p_i,q_i)$, all  isomorphic to  $(Y_1; p_1,q_1)$,
and if $Y_1$ has an involution switching $p_1, q_1$.
Therefore  $X$ is a Torelli curve.

If instead $Y_i$ has an automorphism exchanging the two gluing points
for every $i=1,\ldots,h$,
and no $h-1$ among the $Y_i$ are isomorphic, it is clear that for $h\geq 4$
there exist different orderings of the $Y_i$ giving different C1-equivalent curves.
Therefore we must have $h=3$, hence $X$ is a Torelli curve.
\end{proof}
The proof of the Theorem used the following lemmas.

\begin{lemma}\label{number-equiv}
Let $X$ be a connected nodal curve free from separating nodes.
Then the cardinality of the
C1-equivalence class of $X$ is at most
$$\prod_{S\in \Set X}2^{\#S -1}(\# S-1)! \  .$$
\end{lemma}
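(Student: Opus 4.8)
The plan is to reduce the count to the combinatorial gluing data of \ref{gldata} and then enumerate them. By Lemma~\ref{detinv}, every curve $X'$ that is C1-equivalent to $X$ has, after identifying the normalizations by means of the isomorphism $\phi$ of Definition~\ref{C1}, the same normalization $\Xn$, the same gluing sets $G_S\subset\Xn$, the same connected components $Y_{S,i}$ and the same involutions $\iota_S$, for every $S\in \Set X$. By the reconstruction spelled out at the end of \ref{gldata}, $X'$ is then determined, inside the C1-equivalence class, by the tuple $\left([(\sigma_S,\psi_S)]\right)_{S\in \Set X}$ of its gluing data, one equivalence class per C1-set; and, since distinct C1-sets are disjoint (Fact~\ref{C1lm}) and each $G_S$ is glued independently of the others, these data may be prescribed independently over $S$. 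Consequently the map sending such a tuple to the isomorphism class of the resulting curve surjects onto the C1-equivalence class of $X$, so it suffices to bound the number of gluing data of a single C1-set and take the product.

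Fix therefore $S\in \Set X$ and set $h=\#S$. A representative of a gluing datum is, by \ref{gldata}, a pair $(\sigma_S,\psi_S)$ where $\sigma_S$ is a fixed-point-free cyclic permutation of $\{1,\dots,h\}$ and $\psi_S\colon (G_h,\iota_h)\to(G_S,\iota_S)$ is a marking, two representatives being identified under the involution (\ref{invgd}), namely $(\sigma_S,\psi_S)\mapsto(\sigma_S^{-1},\psi_S\circ\iota_h)$. The fixed-point-free cyclic permutations of $\{1,\dots,h\}$ are exactly the $h$-cycles, so there are $(h-1)!$ of them; a marking amounts to choosing, for each of the $h$ pairs, which of its two points is the image of the corresponding $s_i$, so there are $2^h$ of them. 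Hence there are $(h-1)!\cdot 2^h$ pairs $(\sigma_S,\psi_S)$.

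It remains to observe that the involution (\ref{invgd}) is free: a fixed point would force $\psi_S=\psi_S\circ\iota_h$, which is impossible because $\iota_h$ has no fixed points while $\psi_S$ is a bijection, so $\psi_S(s_i)\neq\psi_S(t_i)$. Therefore the number of gluing data of $S$ is exactly
$$\frac{(h-1)!\cdot 2^h}{2}=2^{h-1}(h-1)!=2^{\#S-1}(\#S-1)!,$$
and multiplying over $S\in\Set X$ and invoking the surjection of the first step yields the asserted bound. The main (and only delicate) point is the structural input of Lemma~\ref{detinv}, which guarantees that every C1-equivalent curve is a re-gluing of the \emph{same} marked normalization, together with the fact that (\ref{invgd}) is the sole redundancy among representatives; the inequality, rather than equality, in the statement simply reflects that inequivalent gluing data may yield isomorphic curves.
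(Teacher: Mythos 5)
Your proof is correct and follows essentially the same route as the paper's: both reduce the count to the gluing data $[(\sigma_S,\psi_S)]$ of \ref{gldata} for each C1-set, count $2^{\#S}$ markings and $(\#S-1)!$ cyclic permutations, quotient by the involution (\ref{invgd}), and take the product over $\Set X$. Your explicit verification that the involution (\ref{invgd}) is fixed-point free (so that orbits have size exactly two) is a point the paper leaves implicit, and it is a worthwhile addition since otherwise dividing by $2$ would not be justified.
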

\begin{proof}
By the discussion in \ref{gldata},
the number of curves that are C1-equivalent to   $X$ is bounded above
by the product of the number of all gluing data for each $C1$-set
$X$.
The C1-sets with $\#S=1$ admit only one gluing data,  so they do not contribute.

Let $S$ be a C1-set of cardinality at least $2$.
Clearly there are    $2^{\#S}$ possible markings $\psi_S$,
and $(\#S-1)!$ possible choices for the
cyclic permutation $\sigma_S$.
Furthermore, recall that each gluing data can be given by two such pairs
$(\psi_S,\sigma_S)$,
namely  the two conjugate pairs  under the involution (\ref{invgd}).
This gives us a total of $2^{\#S -1}(\# S-1)!$ gluing data.
\end{proof}
We shall repeatedly use the following elementary
\begin{remark}\label{auto-elliptic}
Let $E$ be a connected nodal curve of genus at most $1$, free from separating nodes.
For any two smooth points $p, q$ of $E$, there exists an automorphism
 of $E$ exchanging $p$ and $q$.
\end{remark}

\begin{lemma}
\label{countC1}
Let $X$  be a connected curve of genus $g\geq 2$ free from separating nodes;
let $e$ be the number of its exceptional components.
Then the C1-equivalence class of $X$ has cardinality at most
$$
\left\lceil \frac{(g-2+e)!}{2}\right\rceil.
$$
 \end{lemma}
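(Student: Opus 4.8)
The plan is to upgrade the crude count of Lemma~\ref{number-equiv} using the extra automorphisms provided by Remark~\ref{auto-elliptic}, and then to argue by induction on the number of nodes of $X$. Lemma~\ref{number-equiv} already gives $\#\{\text{C1-class}\}\le \prod_{S\in\Set X}2^{\#S-1}(\#S-1)!$, and the only term that must be improved is the factor $2^{\#S-1}$ recording the markings $\psi_S$: for a cycle of $g-1$ elliptic curves this factor alone would exceed the asserted bound. The first observation is that a connected component $Y_{S,i}$ of $Y_S$ of arithmetic genus $\le 1$ is free from separating nodes, hence by Remark~\ref{auto-elliptic} it admits an automorphism exchanging its two gluing points; such an automorphism realises the corresponding marking-flip as an isomorphism of the glued curve, so the markings supported on genus-$\le 1$ components produce no new isomorphism classes. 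Consequently, writing $n_S$ for the number of components of $Y_S$ of arithmetic genus $\ge 2$, the contribution of a single C1-set $S$ to the number of isomorphism classes is at most $\lceil 2^{n_S}(\#S-1)!/2\rceil$.

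For the induction I would fix one C1-set $S\in\Set X$, of cardinality $h$, and normalise $X$ along $S$, obtaining $Y_S=\coprod_{i=1}^h Y_i$ with each $Y_i$ connected, free from separating nodes, and with fewer nodes than $X$; by Lemma~\ref{ST} the remaining C1-sets are distributed among the $Y_i$, so by \ref{gldata} a member of the C1-class of $X$ is the datum of a gluing of $S$ together with an independent choice, for each $i$, of a curve C1-equivalent to $Y_i$. This yields
$$\#\{\text{C1-class of }X\}\ \le\ \Bigl\lceil\tfrac{2^{n_S}(h-1)!}{2}\Bigr\rceil\cdot\prod_{i=1}^h\#\{\text{C1-class of }Y_i\},$$
and the inductive hypothesis bounds each factor on the right by $\lceil(p_a(Y_i)-2+e_i)!/2\rceil$, where $e_i$ is the number of exceptional components of $Y_i$ and the genus-$\le 1$ pieces contribute $1$. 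The arithmetic backbone is the identity $\sum_{i=1}^h p_a(Y_i)=g-1$, which follows from $\chi(\mathcal O_{Y_S})=\chi(\mathcal O_X)+h$ together with $b_1(\Gamma_X(S))=1$ (Remark~\ref{C1st}).

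It then remains to show that the product above is at most $\lceil(g-2+e)!/2\rceil$, using the elementary estimates $2^a m!\le(m+a)!$ (for $m\ge 1$) and $\prod_j N_j!\le(\sum_j N_j)!$. The conceptual reason the bound carries the correction $e$ is that the quantity $g-2+e$ is unchanged when an exceptional $\mathbb{P}^1$ is replaced by an elliptic curve, while the number of isomorphism classes can only decrease under such a replacement (a $\mathbb{P}^1$ has strictly more automorphisms fixing its gluing configuration than a generic elliptic curve); thus the extremal case is the all-elliptic cycle, for which the bound is an equality. The main obstacle is to make this precise at the level of the recursion: one must track how the exceptional components of $X$ relate to those of the pieces $Y_i$ — a genus-$0$ component of $Y_S$ is itself exceptional in $X$, whereas an exceptional $\mathbb{P}^1$ internal to some $Y_i$ need not remain exceptional in $X$ if it carries a gluing point of $S$ — and then control the interaction between the marking factors $2^{n_S}$, the factorials, and the ceilings. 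Because the ceilings genuinely improve the naive factorial estimate precisely on the low-genus pieces, where the plain additive budget $(h-1)+n_S+\sum_{p_a(Y_i)\ge2}(p_a(Y_i)-2+e_i)\le g-2+e$ can fail, this last estimate must be organised as a short case analysis rather than a single inequality; the base case, when $X$ is smooth so that the class is $\{X\}$, is immediate since $\lceil(g-2+e)!/2\rceil\ge 1$ for $g\ge 2$.
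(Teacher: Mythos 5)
Your framework coincides with the paper's: induction, a chosen C1-set $S$ whose gluing data are counted up to the marking-flips supplied by Remark~\ref{auto-elliptic} (giving the factor $\lceil 2^{n_S}(h-1)!/2\rceil$, with $n_S$ the number of genus $\geq 2$ components of $Y_S$), multiplied by the C1-class cardinalities of the pieces $Y_i$, to which the inductive hypothesis applies; the paper inducts on $g$ rather than on the number of nodes, but that is cosmetic. The genuine gap is that the decisive step --- deducing $\lceil (g-2+e)!/2\rceil$ from this recursion --- is never performed: you defer it to an unspecified ``short case analysis'', and the two estimates you name do not suffice. Concretely, write $\epsilon_i$ for the number of exceptional components of $Y_i$ and $e_i$ for the number of exceptional components of $X$ contained in $Y_i$. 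Absorbing the marking factor via $2^{n_S}(h-1)!\leq (h-1+n_S)!$ and merging factorials makes your additive budget come out to $g-2+e+\sum_{g_i\geq 2}(\epsilon_i-e_i)-\sum_{g_i=1}e_i$, which overshoots $g-2+e$ whenever some genus $\geq 2$ piece has an exceptional component through a gluing point of $S$: for instance, take $X$ of genus $6$ obtained by gluing a smooth genus $2$ curve $Y_2$ at two points to the genus $3$ curve $Y_1=C\cup E$ ($C$ smooth of genus $2$, $E\cong\pr{1}$ meeting $C$ twice), one gluing point lying on $E$; here $\epsilon_1=1$, $e=e_1=e_2=0$, and your budget gives $5>4=g-2+e$. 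Separately, your chain loses the global factor $1/2$: once $2^{n_S}$ is converted into a factorial and the small pieces have been rounded up by the ceilings $\lceil (g_i-2+\epsilon_i)!/2\rceil=1$, no half survives, so even when the budget holds you only reach $(g-2+e)!$ rather than $\lceil (g-2+e)!/2\rceil$.

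These are exactly the two points the paper's proof is engineered to handle, by devices absent from your sketch. First, it weakens each inductive bound to the uniform quantity $(g_i+e_i)!/2$: this is legitimate because $Y_i$ has at most $e_i+2$ exceptional components (only the components carrying the two gluing points of $S$ can be exceptional in $Y_i$ without being exceptional in $X$), and because $g_i+e_i\geq 2$ makes every ceiling disappear. Second, it never trades powers of $2$ for factorials: the marking factor $2^{n_S-1}$ cancels exactly against the $n_S$ halves, leaving $(h-1)!\prod_{g_i\geq 2}(g_i+e_i)!\,/\,2$; only then does the factor count close, since the number of nontrivial factorial factors equals $h-2+\sum_{g_i\geq 2}(g_i+e_i-1)=g-3+e-\sum_{g_i=1}e_i\leq g-3+e$, using $\sum_{g_i\geq 2}g_i=g-1-k_1$ (where $k_1$ is the number of genus $1$ pieces) and the fact that every genus $0$ component of $Y_S$ is itself an exceptional component of $X$. (The paper also needs $g\geq 4$ in this step so that $(h-1)!\,2^{n_S-1}$ is an integer, which is why $g=2,3$ are treated as separate base cases; your ceiling formulation sidesteps that issue, but at the price of the cancellation problem above.) Until the substitution $\epsilon_i\leq e_i+2$ and the exact power-of-$2$ cancellation --- or an honest case analysis replacing them --- are written out, your argument is incomplete precisely at the point where the lemma is actually proved.
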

\begin{proof}
Throughout this proof, we denote by $\{Y\}_{C1}$ the C1-equivalence class of a nodal
curve $Y$. We will use induction on $g$.

We begin with the following claim.
Let $\ov{X}$ be the stabilization of $X$. If
$\Gamma_{\ov{X}}$ is $3$-edge connected, then $\#\{X\}_{C1}=1$.

Indeed there is a natural bijection between the C1-sets of $X$ and those of $\ov{X}$;
which we denote by $S\mapsto \ov{S}$.
By assumption, for every C1-set $\ov{S}$ of $\ov{X}$
the partial normalization
$\ov{Y}_{\ov{S}}$ of $\ov{X}$ at $\ov{S}$  is connected (since $\#\ov{S}=1$).
Now, for any $S\in \Set X$,
the partial normalization $Y_S$ of $X$ at $S$ is equal to the disjoint union
of
$\ov{Y}_{\ov{S}}$   together
with some copies of $\pr{1}$. Using this explicit description and
\ref{auto-elliptic}  we find that all the possible gluing data $[(\sigma_S, \psi_S)]$
of $S$ (see \ref{gldata}) give isomorphic curves, i.e. $X$ is unique inside
its C1-equivalence class. The claim is proved.

Now we start the induction argument. Let us treat the cases $g=2, 3$.

Using the above claim, it is easy to see that   to prove the Lemma for $g=2, 3$
we need only   worry about curves $X$ of genus $3$, whose stabilization $\ov{X}$ is the union
of two components $C_1$ and $C_2$ of genus $1$, meeting at two points.
If $e=0$ then $X$ is unique in its C1-equivalence
class by using  \ref{auto-elliptic}.
If $e>0$ then the  curves C1-equivalent to $X$ are obtained by inserting
two chains of exceptional components   between $C_1$ and $C_2$, one of length $e_1$ for every
$0\leq e_1\leq \lfloor e/2\rfloor $,
 and the other of length $e-e_1$. It is obvious that for different values of $e_1$ we get non isomorphic curves, and that we get all of the curves C1-equivalent to $X$ in this way.
 Therefore
$$\#\{X\}_{C1}=1+\left\lfloor \frac{e}{2}\right\rfloor\leq \left\lceil \frac{(e+1)!}{2}\right\rceil.$$

Assume now $g\geq 4$ and let $S\in\Set X$ such that $\#S=h$.
As usual, we write $Y_S=\coprod_1^hY_i$, with $Y_i$ free from separating nodes and of genus
$g_i:=g_{Y_i}$.
We order the connected components $Y_i$ of $Y_S$ in such a way that:
\begin{itemize}
 \item $Y_1,\ldots, Y_f$ have genus at least $4$;
\item $Y_{f+1},\ldots,Y_{f+k_3}$ have genus $3$;
\item $Y_{f+k_3+1},\ldots, Y_{f+k_3+k_2}$ have genus $2$;
\item $Y_{f+k_3+k_2+1}, \ldots, Y_{f+k_3+k_2+k_1}$ have genus $1$;
\item $Y_{f+k_3+k_2+k_1+1},\ldots, Y_h$ have genus $0$ and therefore are isomorphic to $\pr{1}$.
\end{itemize}

Let $e_i$ be the number of exceptional components of $X$ contained in $Y_i$;
then $Y_i$ has at most $e_i+2$ exceptional components.
We have the obvious relations
\begin{gather}
e=\sum_{i=1}^h e_i=\sum_{g_i\geq 2} e_i + \sum_{g_i=1} e_i+ h-f-k_3-k_2-k_1 \tag{*} \\
g-1=\sum_i g_i=\sum_{g_i\geq 2} g_i +  k_1. \tag{**}
\end{gather}

Consider now the gluing data $[(\sigma_S, \psi_S)]$ associated to $S$ (notation
as in \ref{gldata}).
Call, as usual, $\{p_i, q_i\}$ the two points of $G_S$ contained in the component $Y_i$.
Since all the components $Y_i$ with $g_i\leq 1$ have an automorphism that exchanges
$p_i$ and $q_i$ (by \ref{auto-elliptic}), if we compose the marking
$\psi_S$ with the involution of $G_h$ that exchanges
$s_i$ with $t_i$ (for all indices $i$ such that $g_i\leq 1$) the resulting curve
will be isomorphic to the starting one. Therefore, the number of possible non-isomorphic
gluing  data associated to $S$ is bounded above by $(h-1)!2^{f+k_2+k_3-1}$; 
since $g\geq 4$   this number is an integer (if $f=k_2=k_3=0$ then $h\geq 3$).
We conclude that
$$
\#\{ X\}_{{C1}}\leq  (h-1)!2^{f+k_3+k_2-1}\prod_{i=1}^h\#\{ Y_i\}_{C1}.
 $$

The components $Y_i$ of genus at most $1$ are  unique inside their C1-equivalence
class. For the components $Y_i$ of genus $g_i\geq 2$ we can apply the induction hypothesis
(note that $2\leq g_i<g$) and we get that
$$\#\{Y_i\}_{{C1}}\leq  \left\lceil \frac{(g_i-2+e_i+2)!}{2}\right\rceil=
\frac{(g_i+e_i)!}{2}
$$
By substituting into the previous formula, we get$$
\#\{ X\}_{{C1}}\leq  (h-1)!2^{f+k_3+k_2-1}\prod_{i=1}^{f+k_3+k_2} \frac{(g_i+e_i)!}{2}=
\frac{(h-1)!\prod_{g_i\geq 2} (g_i+e_i)!}{2}.  $$
The number of (non-trivial)  factors of the product  
$(h-1)!\prod_{g_i\geq 2} (g_i+e_i)!$
is equal to $ h-2+\sum_{g_i\geq 2} (g_i+e_i-1)$.  Using the formulas (*) and (**), we get
that
$$h-2+\sum_{g_i\geq 2} (g_i+e_i-1)=g-3+e-\sum_{g_i=1}e_i\leq g-3+e.
$$
Since the factorial $(g-2+e)!$ has a number of  factors equal to $g-3+e$, we conclude
from the above inequalities that
$$\#\{ X\}_{{C1}}\leq \frac{(h-1)!\prod_{g_i\geq 2} (g_i+e_i)!}{2}\leq \frac{(g-2+e)!}{2},
$$
as claimed.
\end{proof}
\begin{cor}
\label{5}
$\tgb^{-1}(\tgb (X))=
\{X\}$ for every $X\in \Mgb$ with $\sep=\emptyset$ if and only if $g\leq 4$.
\end{cor}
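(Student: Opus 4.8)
The plan is to derive both implications directly from the sharp cardinality bound of Theorem~\ref{Tinj}(\ref{Tinj1}), after recording two trivial but essential observations: that $\tgb^{-1}(\tgb(X))$ is always nonempty since it contains $X$, and that by Theorem~\ref{main} this fiber is precisely the C1-equivalence class of $X$. The first thing I would do is evaluate the bound $\left\lceil \frac{(g-2)!}{2}\right\rceil$ in the relevant range. For $g=2,3$ one has $(g-2)!=1$, and for $g=4$ one has $(g-2)!=2$, so in all three cases the bound equals $1$; whereas for $g\geq 5$ one has $(g-2)!\geq 6$, whence $\left\lceil \frac{(g-2)!}{2}\right\rceil\geq 3>1$.

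For the implication $g\leq 4\Rightarrow$ injectivity, the argument is then immediate: for every stable $X$ with $\sep=\emptyset$, Theorem~\ref{Tinj}(\ref{Tinj1}) gives $\#\tgb^{-1}(\tgb(X))\leq \left\lceil \frac{(g-2)!}{2}\right\rceil=1$, and since the fiber is nonempty it must equal $\{X\}$. This handles one direction with no further work.

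For the converse I would argue by contraposition, producing a single curve with a non-trivial fiber whenever $g\geq 5$, using the sharpness clause of Theorem~\ref{Tinj}(\ref{Tinj1}). Concretely, I take $X$ to be a cycle curve whose normalization is a disjoint union of $g-1$ pairwise non-isomorphic elliptic curves; such a curve exists because over an algebraically closed field there are infinitely many isomorphism classes of elliptic curves, distinguished by the $j$-invariant. A cycle has no separating edge, so $\sep=\emptyset$, and its arithmetic genus is $(g-1)+b_1(\Gamma_X)=(g-1)+1=g$, so $X$ is an admissible test curve. By the sharpness already established in the proof of Theorem~\ref{Tinj}, its fiber has cardinality exactly $\left\lceil \frac{(g-2)!}{2}\right\rceil\geq 3$, so $\tgb^{-1}(\tgb(X))\neq\{X\}$ and injectivity fails on the $\sep=\emptyset$ locus.

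I do not anticipate any genuine obstacle: the entire substance has been front-loaded into Theorem~\ref{main} and Theorem~\ref{Tinj}, so the corollary is essentially a numerical reading of the bound $\left\lceil \frac{(g-2)!}{2}\right\rceil$ together with its sharpness. The only point requiring a line of care is confirming that the extremal example of Theorem~\ref{Tinj} indeed lies in the $\sep=\emptyset$ locus and realizes the bound, and both facts are already contained in the cited statement.
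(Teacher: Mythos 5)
Your proof is correct and follows exactly the argument the paper intends: the corollary is stated without proof as an immediate consequence of Theorem~\ref{Tinj}(\ref{Tinj1}), namely that $\left\lceil (g-2)!/2\right\rceil=1$ precisely for $g\leq 4$, while for $g\geq 5$ the sharpness example (the cycle of $g-1$ pairwise non-isomorphic elliptic curves, which is stable, has $\sep=\emptyset$, and has arithmetic genus $g$) gives a fiber of cardinality $(g-2)!/2\geq 3$. Your added verifications (nonemptiness of the fiber, the genus count, and the existence of non-isomorphic elliptic curves) are exactly the routine checks the paper leaves implicit.
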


\begin{remark}
Consider a Torelli curve $X$ of genus at least $5$ with dual graph non 3-edge connected.
It is not hard to see that $X$ is the specialization of   curves
for which the Torelli morphism is not injective.
On the other hand we just proved that
   $\tgb^{-1}(\tgb (X))=\{X\}$.
Therefore the Torelli morphism, albeit injective at $X$, necessarily  ramifies at $X$.
\end{remark}

\subsection{Dimension of the fibers}

Let $X$ be a stable curve of genus $g$; now we shall assume that $\sep$ is not empty
and bound the dimension of the fiber of the Torelli map over $X$.

Recall the notation of (\ref{notsep}); the normalization of $X$ at $\sep$ is denoted $\w{X}$.
We denote by
$\w{\gamma}_0$ the number of connected components of $\w{X}$ of arithmetic
genus $0$, by $\w{\gamma}_1$ the number of those
of arithmetic genus $1$, and by $\w{\gamma_+}$ the number of those having positive  arithmetic genus, that is:
$$
\w{\gamma}_j:=\#\{i: \tilde{g}_i=j\}, \  j=0,1,\  \ \  \text{ and }\    \  \w{\gamma}_+:=\#\{i: \tilde{g}_i\geq 1\}.
$$

\begin{prop}\label{dim-fibers}
Let $X$ be a stable curve of genus $g\geq 2$. Then
$$\dim \tgb^{-1}(\tgb(X))=2 \w{\gamma_+} -\w{\gamma}_1-2
$$
(i.e. the maximal dimension of an irreducible  component of $\tgb^{-1}(\tgb(X))$ is equal  to  $2 \w{\gamma_+} -\w{\gamma}_1-2$).
\end{prop}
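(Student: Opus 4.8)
The plan is to combine Theorem~\ref{main} with the product decomposition of Corollary~\ref{Jacobian-irr}, and then to run a dimension count over the combinatorial types of the curves lying in the fiber.

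First I would describe the fiber set-theoretically. By Corollary~\ref{Jacobian-irr}(ii) the ppSSAP $\tgb(X)$ is the product of the irreducible pairs attached to the connected components $\tX_i$ of $\tX$ with $\tilde g_i>0$; since such a decomposition into irreducibles is unique, $\tgb(X')=\tgb(X)$ forces a genus-preserving bijection between the positive genus blocks of $X$ and those of $X'$ under which corresponding blocks have isomorphic ppSSAP, i.e. (by Theorem~\ref{main} applied blockwise, each block being free from separating nodes) are C1-equivalent. Hence along the whole fiber the integers $\w{\gamma_+}$ and $\w{\gamma}_1$, together with the multiset of genera of the positive genus blocks, stay constant, while the rational components of $\tX'$ and the configuration of separating nodes of $X'$ are unconstrained. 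I would record this as the precise description of $\tgb^{-1}(\tgb(X))$.

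Next I would stratify the fiber by combinatorial type. A curve $X'$ in the fiber is rebuilt from: the (finitely many, hence rigid up to their fixed $j$-invariants) positive genus blocks; a number $m_0\ge 0$ of rational blocks $\pr{1}$; a tree $T$ on the $N:=\w{\gamma_+}+m_0$ blocks whose edges are the separating nodes of $X'$ (this is genuinely a tree, since a bridge always joins two distinct bridgeless blocks); and a choice of attaching point on each block for every edge of $T$ incident to it. Stability of $X'$ forces every rational block to have valence $\ge 3$ in $T$, whereas positive genus blocks may have any positive valence. For a fixed type the corresponding locus in $\Mgb$ is a quotient of the product of the configuration spaces of the attaching points by the automorphism groups of the blocks, so its dimension equals $\sum_v\bigl(n_v-\dim\Aut(\text{block }v)\bigr)$, where $n_v$ is the valence of $v$. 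Using $\dim\Aut=0,1,3$ according as the block has genus $\ge 2$, $=1$ or $=0$ (for the last two this is where Remark~\ref{auto-elliptic} and the description of $\Aut(\pr{1})$ and of genus-one automorphisms enter), together with $\sum_v n_v=2(N-1)$, I obtain
\begin{equation*}
\sum_v\bigl(n_v-\dim\Aut(\text{block }v)\bigr)=2(N-1)-(\w{\gamma}_1+3m_0)=2\w{\gamma_+}-\w{\gamma}_1-2-m_0 .
\end{equation*}
This quantity depends only on $m_0$ and is maximal precisely when $m_0=0$, giving $2\w{\gamma_+}-\w{\gamma}_1-2$. I would then check that the stratum $m_0=0$ is non-empty and genuinely lies in the fiber: connecting the positive genus blocks by an arbitrary tree (possible since $g\ge 2$ forces $\w{\gamma_+}\ge 1$) yields a stable curve of arithmetic genus $\sum_i\tilde g_i=g$ with the same positive genus blocks, hence the same Torelli image. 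Taking the supremum of the stratum dimensions gives the asserted value.

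The step I expect to be the main obstacle is the justification of the per-stratum dimension formula $\sum_v\bigl(n_v-\dim\Aut(\text{block }v)\bigr)$: one must argue that, modulo the finite data of tree automorphisms and blockwise C1-equivalence, a curve in a stratum is determined by its pointed blocks up to their automorphisms, and that the positive-dimensional part of the identifications between two reconstructions is exactly $\prod_v\Aut^0(\text{block }v)$ acting on the attaching configurations. The genus $\le 1$ blocks, whose automorphism groups absorb one (genus one) or three (genus zero) attaching points, are the delicate cases, and are precisely what produces the correction $-\w{\gamma}_1$ and the suppression of rational blocks in the maximum.
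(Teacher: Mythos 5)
Your strategy coincides with the paper's (stratify the fiber by topological type, count two parameters per separating node minus the automorphism dimensions of the blocks, maximize, and exhibit a nonempty maximal stratum), and your per-stratum count agrees with the paper's formula where the two stratifications overlap. But your set-theoretic description of the fiber is incomplete, and this leaves a real gap in the upper bound. Theorem~\ref{main}, via Remark~\ref{stab}, determines up to finite data only the \emph{stabilizations} $\ov{\w{X_i}}$ of the positive genus blocks, not the blocks themselves: a curve $X'$ in the fiber may have a positive genus block $\w{X'_i}$ strictly larger than its stabilization, namely containing exceptional components (copies of $\pr{1}$ meeting the rest of the block in exactly two points), which are permitted in a stable $X'$ because they carry branches of separating nodes. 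Concretely, let $A'=E_1\cup E_2$ be two elliptic curves meeting at two points, let $Y$ be obtained by inserting a $\pr{1}$, call it $R$, at one of the two nodes, and let $X'=Y\cup B$ be obtained by gluing a positive genus stable curve $B$ to a point of $R$; then $X'$ is stable, free from separating nodes except the one joining $Y$ and $B$, its positive genus blocks are $Y$ and $B$ with stabilizations $A'$ and $B$, so $\tgb(X')=\tgb(A'\cup B)$ by Theorem~\ref{main} — yet $Y$ is not C1-equivalent to $A'$ (they do not even have the same normalization). Such curves are invisible in your stratification, where every positive genus block is one of the finitely many C1-representatives and all remaining rational curves are tree vertices of valence $\geq 3$. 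Hence your supremum runs over too small a family of strata, and the inequality $\dim\tgb^{-1}(\tgb(X))\leq 2\w{\gamma_+}-\w{\gamma}_1-2$ is not actually proved.

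This omission does not change the answer, but it is precisely why the paper's formula (\ref{dim-type}) contains the extra term $-e$, where $e$ is the number of exceptional components inside the positive genus blocks of the curve under consideration: each such component carries two branches of non-separating nodes and at least one branch of a separating node, and its pointed automorphism group $\Gm$ absorbs one parameter, so each one drops the dimension of the stratum by $1$. Consequently the strata you missed have strictly smaller dimension and the maximum is still attained at $e=0$, $\w{\gamma_0}=0$, which is your $m_0=0$ stratum. To repair your argument, enlarge the stratification so that a block is any bridgeless curve whose stabilization lies in the prescribed C1-class, and prove this estimate for the exceptional components. A smaller point: you invoke uniqueness of the decomposition of a ppSSAP into irreducible factors, which the paper never establishes; it is cleaner to quote the second statement of Theorem~\ref{main} directly, since it already treats curves with separating nodes.
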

\begin{proof}
According to Theorem \ref{main},
  $\tgb(X)$
depends on (and determines) the C1-equivalence class of the stabilizations $\ov{\w{X_i}}$
of the components of $\w{X}$ such that $\w{g_i}>0$.
The C1-equivalence class of $\ov{\w{X_i}}$ determines
$\ov{\w{X_i}}$ up to a finite choice.
In particular, note that $\w{\gamma_0}$ and the number, call it $e$, of exceptional components
of  $\coprod_{\w{g_i}>0} \w{X_i}$  is not determined by $\tgb(X)$.

The dimension of the locus of curves in the fiber $\tgb^{-1}(\tgb(X))$ having the same topological
type of $X$ is equal to
\begin{equation}\label{dim-type}
2\#\sep  -3\w{\gamma_0}-\w{\gamma_1}-e.
\end{equation}
 Indeed, each separating node gives two parameters of freedom,
because we can  arbitrarily  choose the two branches of the node. The components $\w{X_i}$ of arithmetic genus
$0$ reduce the parameters by $3$ because they have a $3$-dimensional automorphism group, similarly
the components of arithmetic genus $1$ reduce the parameters by $1$.
Finally, each exceptional component of $\coprod_{\w{g_i}>0} \w{X_i}$ reduces the parameters
by $1$, because it contains at least one branch of one of the separating nodes and exactly
two branches of non-separating nodes.

Formula (\ref{dim-type}) shows that the curves $X'$ in the fiber $\tgb^{-1}(\tgb(X))$ whose topological
type attains the maximal dimension are the ones for which $e'=0$ (i.e. each positive genus component
of $\w{X'}$ is stable) and $\w{\gamma_0'}=0$ (i.e.
  $\w{X'}$ has no genus $0$ component).

  In particular, since  $\w{\gamma'_+}=\w{\gamma'}$, such a curve $X'$ has
  $\#X'_{\text{sep}}=\w{\gamma'_+} -1$
  separating nodes.
Applying   formula (\ref{dim-type}) to the curve $X'$ we obtain
$
\dim \tgb^{-1}(\tgb(X))\leq 2 \w{\gamma_+'} -\w{\gamma'}_1-2.
$
To conclude that equality holds we must check that the locus of curves $X'$ is not empty.
This is easy: given $\coprod_{\w{g_i}>0} \w{X_i}$
we can glue (in several ways) the   stabilizations of the  $\w{X_i}$  so that they form a tree.
This, by our results, yields curves in $\tgb^{-1}(\tgb(X))$.
\end{proof}

\begin{cor}\label{ineq-dim-fibers}
Let $X$ be a stable curve of genus $g$.
Then
$$\begin{sis}
& \dim \tgb^{-1}(\tgb(X)) \leq g-2 & \text{ with equality iff } \  \w{g_i}\leq 2 \text{ for all } i,\\
& \dim \tgb^{-1}(\tgb(X)) \geq \w{\gamma_+}-2 &
\text{ with equality iff } \  \w{g_i}\leq 1 \text{ for all } i.\\
\end{sis}$$
\end{cor}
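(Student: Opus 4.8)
The plan is to obtain both bounds as immediate arithmetic consequences of the exact dimension formula of Proposition~\ref{dim-fibers},
$$\dim \tgb^{-1}(\tgb(X)) = 2\w{\gamma}_+ - \w{\gamma}_1 - 2,$$
together with the single genus identity
$$g = \sum_{i=1}^{\w{\gamma}} \w{g_i},$$
where $\w{g_i}$ is the arithmetic genus of the connected component $\w{X}_i$ of the normalization $\w X$ of $X$ at $\sep$. So the first thing I would do is record this identity. It follows from an Euler-characteristic count: normalizing $X$ at its $\#\sep=\w{\gamma}-1$ separating nodes gives $\chi(\O_{\w X})=\chi(\O_X)+\#\sep$, and expanding both sides via $\chi(\O_{\w X})=\sum_i(1-\w{g_i})=\w{\gamma}-\sum_i\w{g_i}$ and $\chi(\O_X)=1-g$ yields $\sum_i \w{g_i}=g$ at once. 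Equivalently, the components $\w X_i$ are glued along a tree of separating nodes, which contributes nothing to the genus.

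For the upper bound I would split the genus sum according to the value of $\w{g_i}$. Observing that $\w{\gamma}_+-\w{\gamma}_1$ is exactly the number of components with $\w{g_i}\geq 2$, I get
$$g=\sum_{\w{g_i}\geq 1}\w{g_i}=\w{\gamma}_1+\sum_{\w{g_i}\geq 2}\w{g_i}\geq \w{\gamma}_1+2(\w{\gamma}_+-\w{\gamma}_1)=2\w{\gamma}_+-\w{\gamma}_1.$$
Substituting into Proposition~\ref{dim-fibers} gives $\dim\tgb^{-1}(\tgb(X))=(2\w{\gamma}_+-\w{\gamma}_1)-2\leq g-2$. The inequality in the display is an equality precisely when $\sum_{\w{g_i}\geq 2}\w{g_i}=2(\w{\gamma}_+-\w{\gamma}_1)$, i.e. when every component of genus at least $2$ has genus exactly $2$; this is the stated condition $\w{g_i}\leq 2$ for all $i$.

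For the lower bound I would instead rewrite the same formula as
$$\dim\tgb^{-1}(\tgb(X))=2\w{\gamma}_+-\w{\gamma}_1-2=\w{\gamma}_+ + (\w{\gamma}_+-\w{\gamma}_1)-2\geq \w{\gamma}_+-2,$$
where the last inequality holds because $\w{\gamma}_+-\w{\gamma}_1$, being the number of components with $\w{g_i}\geq 2$, is non-negative. Equality holds precisely when $\w{\gamma}_+=\w{\gamma}_1$, that is when no component $\w X_i$ has genus $\geq 2$, i.e. $\w{g_i}\leq 1$ for all $i$, which is again the stated condition.

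Since everything reduces to Proposition~\ref{dim-fibers} plus elementary manipulation, I do not expect a genuine obstacle here; the only points demanding a little care are the genus identity $g=\sum_i\w{g_i}$ (the one place where a convention for the arithmetic genus of a disconnected curve must be pinned down, which is why I would prove it through $\chi(\O)$), and the bookkeeping that makes the two equality analyses match the thresholds $\w{g_i}\leq 2$ and $\w{g_i}\leq 1$ respectively.
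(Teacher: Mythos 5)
Your proof is correct and follows essentially the same route as the paper: both rewrite the genus as $g=\w{\gamma}_1+\sum_{\w{g_i}\geq 2}\w{g_i}\geq 2\w{\gamma}_+-\w{\gamma}_1$ for the upper bound and use $\w{\gamma}_1\leq\w{\gamma}_+$ for the lower bound, then read off the equality conditions from Proposition~\ref{dim-fibers}. The only difference is that you justify the identity $g=\sum_i\w{g_i}$ explicitly via an Euler-characteristic count, which the paper leaves implicit.
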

\begin{proof}
The first inequality follows from the Proposition and
$$g=\w{\gamma_1}+\sum_{\w{g_i}\geq 2} \w{g_i}\geq \w{\gamma_1}+2 (\w{\gamma_+}-\w{\gamma_1})=
2\w{\gamma_+}-\w{\gamma_1},$$
with   equality if and only if all $\w{g_i}\leq 2$ for all $i$.

The second inequality follows from
$\w{\gamma_1}\leq \w{\gamma_+},$
with   equality if and only $\w{g_i}\leq 1$.
\end{proof}

Using Theorem~\ref{Tinj}  and Corollary~\ref{ineq-dim-fibers}, one
obtains that for $g\geq 3$ the locus in $\Mgb$ where $\tgb$ has finite fibers
is exactly the open subset of stable curves free from separating nodes; see
\cite[Thm. 9.30(vi)]{NamT} and \cite[Thm. 1.1]{vv} for the analogous results for the map $\tgvor$.
On the other hand  $\tgb$ is an isomorphism for $g=2$; again
see \cite[Thm. 9.30(v)]{NamT}.


\begin{thebibliography}{EGKH02}



\bibitem[Ale02]{alex1}
Alexeev, V.: {\it Complete moduli in the presence of semiabelian group
action. } Ann. of Math.
155 (2002), 611--708.

\bibitem[Ale04]{alex} Alexeev, V.: {\it Compactified Jacobians and Torelli
map.}
Publ. RIMS, Kyoto Univ. 40 (2004), 1241--1265.

\bibitem[AN99]{AN} Alexeev, V.; Nakamura, I.: {\it On Mumford's
construction of
degenerating abelian varieties.}  Tohoku Math. J. 51  (1999),  no. 3,
399--420.

\bibitem[ACGH]{ACGH} 
Arbarello, E.; Cornalba, M.; Griffiths, P. A.; Harris, J.: 
{\it Geometry of algebraic curves. Vol. I.} Grundlehren der Mathematischen
Wissenschaften 267.
Springer-Verlag, New York, 1985

\bibitem[AMRT]{AMRT} Ash, A.; Mumford, D.; Rapoport, M.; Tai, Y. :
{\it Smooth compactification of locally symmetric varieties.}
Lie Groups: History, Frontiers and Applications, Vol. IV. Math. Sci.
Press,
Brookline, Mass., 1975.

 \bibitem[B77]{beau} Beauville, A.: {\it Prym varietes and the Schottky
 problem.} Invent. Math.  41
 (1977), no. 2, 149-196.


\bibitem[Bre]{breen} Breen, L.: {\it Fonctions th\^eta et th\'eor\`eme du cube.}
Lecture Notes in Mathematics 980, Springer-Verlag, Berlin 1983.

\bibitem[Bri07]{brion} Brion, M.: {\it Compactification de l'espace des
modules des vari\'et\'es
ab\'eliennes principalement polaris\'ees (d'apr\`es V. Alexeev).}
S\'eminaire Bourbaki, Vol. 2005/2006,  Ast\'erisque  No. 311  (2007).

\bibitem[Cap94]{caporaso}
 Caporaso,  L.: {\it A compactification of the universal
Picard variety over the moduli space of stable curves.}
Journ. of the Amer. Math. Soc. 7 (1994), 589--660.


\bibitem[Cap07]{ctheta} Caporaso L.:
{\it Geometry of the theta divisor of a compactified jacobian.}
To appear in Journ. of the Europ. Math. Soc. 
 Vol. 11,  2009 pp. 1385-1427.
Available at arXiv:0707.4602.

\bibitem[CV09]{CV} Caporaso, L.; Viviani, F.: {\it Torelli theorem
for graphs and tropical curves.} Duke Math. Journ.  Vol. 153. No1, 2010 pp. 129-171.  Available at arXiv:0901.1389.

\bibitem[E97]{est}
Esteves, E.: {\it Very ampleness for theta on the compactified Jacobian.}  Math. Z.  226  (1997),  no. 2,
181-191. 

\bibitem[FC90]{FC} Faltings, G.;  Chai, C. L.:
{\it Degeneration of abelian varieties. With an appendix by David
Mumford.}
Ergebnisse der Mathematik und ihrer Grenzgebiete (3) 22,
Springer-Verlag, Berlin, 1990.






\bibitem[Mum68]{Mum} Mumford, D.: {\it Biextension of formal groups},
in Proceedings of the Bombay Colloquium on Algebraic Geometry, Tata Institute of
Fundamental Research Studies in Mathematics 4, London, Oxford University Press, 1968.




\bibitem[Nam73]{Namtor}
Namikawa, Y.: {\it On the canonical holomorphic map from the moduli space of stable curves to the Igusa monoidal transform.}  Nagoya Math. J.  Vol. 52 (1973), 197-259.

\bibitem[Nam76a]{Nam1} Namikawa, Y.: {\it A new compactification of the
Siegel space
and degeneration of Abelian varieties I.}  Math. Ann.  221  (1976), no. 2,
97--141.

\bibitem[Nam76b]{Nam2} Namikawa, Y.: {\it A new compactification of the
Siegel space
and degeneration of Abelian varieties II.}  Math. Ann.  221  (1976), no.
3, 201--241.


\bibitem[Nam79]{Nam4} Namikawa, Y.: {\it Toroidal degeneration of abelian
varieties II.}
Math. Ann.  245  (1979), no. 2, 117--150.

\bibitem[Nam80]{NamT} Namikawa, Y.: {\it Toroidal compactification of
Siegel spaces.}
Lecture Notes in Mathematics 812, Springer, Berlin, 1980.

\bibitem[OS79]{OS}
Oda, T.;  Seshadri,  C.S.: {\it Compactifications of the generalized
Jacobian variety.}
Trans. A.M.S. 253 (1979) 1-90.






\bibitem[SGA]{SGA4}
{\it Seminaire de g\'eom\'etrie alg\'ebrique du Bois-Marie 1963-1964.
Th\'eorie des topos et cohomologie \'etale des schemas}.
Tome 3. Exp. IX   XIX.
LNM 305.  Springer  (1973).


\bibitem[Sim94]{simpson}
 Simpson,  C. T.: {\it Moduli of representations of the fundamental group of a
smooth projective variety.} Inst. Hautes \'Etudes Sci. Publ. Math. 80 (1994), 5--79.

\bibitem[T13]{torelli}
Torelli,  R.: {\it Sulle variet\`a di Jacobi.} Rendiconti della reale accademia dei Lincei
sr. 5, vol 22 (1913), 98--103.

\bibitem[V03]{vv}Vologodsky, V.: {\it The extended Torelli and Prym
maps.} University of Georgia PhD thesis (2003).

\bibitem[Whi33]{Whi}
Whitney, H.: {\it $2$-isomorphic graphs.}  Amer. Journ. Math. {\bf 55} (1933), 245--254.


\end{thebibliography}
\end{document}